\documentclass[letterpaper, 11pt,  reqno]{amsart}

\usepackage{amsmath,amssymb,amscd,amsthm,amsxtra, esint, amsfonts, mathrsfs}

\usepackage[margin=1.2in,marginparwidth=1.5cm, marginparsep=0.5cm]{geometry}

\usepackage[implicit=true]{hyperref}

\allowdisplaybreaks[2]

\sloppy

\hfuzz  = 0.5cm 


\setlength{\pdfpagewidth}{8.50in}
\setlength{\pdfpageheight}{11.00in}

\usepackage{color}

\definecolor{gr}{rgb}   {0.,   0.69,   0.23 }
\definecolor{bl}{rgb}   {0.,   0.5,   1. }
\definecolor{mg}{rgb}   {0.85,  0.,    0.85}
\definecolor{yl}{rgb}   {0.8,  0.7,   0.}
\definecolor{or}{rgb}  {0.7,0.2,0.2}

\newtheorem{theorem}{Theorem}[section]

\newtheorem{lemma}[theorem]{Lemma}
\newtheorem{proposition}[theorem]{Proposition}
\newtheorem{remark}[theorem]{Remark}

\newtheorem{definition}[theorem]{Definition}

\newtheorem*{ackno}{Acknowledgements}

\newcommand{\cA}{\mathcal{A}}
\newcommand{\cB}{\mathcal{B}}

\newcommand{\cE}{\mathcal{E}}
\newcommand{\cF}{\mathcal{F}}

\newcommand{\fH}{\mathfrak{H}}
\newcommand{\fm}{\mathfrak{m}}
\newcommand{\ff}{\mathfrak{f}}
\newcommand{\fg}{\mathfrak{g}}

\newcommand{\cZ}{\mathcal{Z}}
\newcommand{\cG}{\mathcal{G}}
\newcommand{\NN}{\mathcal{N}}

\newcommand{\bE}{\mathbb{E}}
\newcommand{\N}{\mathbb{N}}
\newcommand{\bfN}{\mathbf{N}}
\newcommand{\scrN}{\mathscr{N}}
\newcommand{\bC}{\mathbb{C}}
\newcommand{\PP}{\mathbb{P}}
\newcommand{\R}{\mathbb{R}}

\newcommand{\bR}{\mathbb{R}}

\newcommand{\noi}{\noindent}
\newcommand{\ind}{\mathbf 1}

\newcommand{\Var}{\textup{Var}}
\newcommand{\Lip}{\textup{Lip}}
\newcommand{\w}{\textup{w}}

\newcommand{\1}{\hspace{0.5mm}\text{I}\hspace{0.5mm}}
\newcommand{\II}{\text{I \hspace{-2.8mm} I} }
\newcommand{\III}{\text{I \hspace{-2.9mm} I \hspace{-2.9mm} I}}


\newcommand{\bul}{\bullet}

\newcommand{\al}{\alpha}
\newcommand{\be}{\beta}
\newcommand{\dl}{\delta}
\newcommand{\ze}{\zeta}
\newcommand{\ga}{\gamma}
\def\th{\theta}
\newcommand{\Om}{\Omega}
 \newcommand{\s}{\sigma}

\newcommand{\nb}{\nabla}

\newcommand{\les}{\lesssim}

\newcommand{\dx}{\partial_x}
\newcommand{\dt}{\partial_t}

\newcommand{\wh}{\widehat}
\newcommand{\wt}{\widetilde}
\newcommand{\dom}{\textup{dom}}
\newcommand{\Leb}{\textup{Leb}}
\newcommand{\eqd}{\stackrel{\rm(law)}{=}}

\topmargin -0.4in
\headsep 0.4in
\textheight 9.0in
\oddsidemargin 0.02in
\evensidemargin 0.15in
\textwidth 6.3in

\numberwithin{equation}{section}
\numberwithin{theorem}{section}

\makeatletter
\@namedef{subjclassname@2020}{%
  \textup{2020} Mathematics Subject Classification}
\makeatother

\begin{document}
\baselineskip = 14pt

\title[Hyperbolic Anderson model with L\'evy  noise]
{Hyperbolic Anderson model with L\'evy  white noise:\\
spatial ergodicity and  fluctuation}

\author[R.M.~Balan and G.~Zheng]
{Raluca M. Balan  and Guangqu Zheng}

\address{Raluca M. Balan (corresponding author),
Department of Mathematics and Statistics\\
University of Ottawa\\
150 Louis Pasteur Private\\
Ottawa, ON, K1N 6N5\\
Canada
}

\email{Raluca.Balan@uottawa.ca}

\address{Guangqu Zheng,
Department of Mathematical Sciences\\
University of Liverpool\\
Mathematical Sciences Building\\
Liverpool, L69 7ZL\\
United Kingdom
 }

\email{Guangqu.Zheng@liverpool.ac.uk}

\subjclass[2020]{Primary: 60H15; Secondary:  60H07, 60F05, 60G51}
\dedicatory{Dedicated to
Professor David Nualart  on the
occasion of his retirement}


\keywords{hyperbolic Anderson model;
L\'evy white  noise; ergodicity;
central limit theorem; Wiener-It\^o-Poisson chaos expansion;
Malliavin calculus; Poincar\'e inequalities; Rosenthal's inequality.}

\begin{abstract}
In this paper, we study  one-dimensional
hyperbolic Anderson models  (HAM) driven by
space-time {\it pure-jump L\'evy white  noise} in a finite-variance setting.
Motivated by recent active research on limit theorems for stochastic
partial differential equations driven by Gaussian noises,
we present  the first  study in this L\'evy setting.
In particular, we  first establish the spatial ergodicity of the
solution  and then a quantitative central limit theorem (CLT) for the spatial
averages of the solution to HAM in both
Wasserstein distance and Kolmogorov distance, with the same rate
of convergence. To achieve the first goal (i.e. spatial ergodicity), we
exploit some basic properties of the  solution and apply
a Poincar\'e inequality in the Poisson setting, which requires
 delicate moment estimates on the Malliavin derivatives of the solution.
Such moment estimates are obtained in a soft manner
by observing a natural connection between the Malliavin derivatives of HAM
and a HAM with Dirac delta velocity.
To achieve the second goal (i.e. CLT), we need two key ingredients: (i)
 a univariate   second-order Poincar\'e inequality
in the Poisson setting  that goes back to Last, Peccati, and Schulte
(Probab. Theory Related Fields, 2016)
and has been recently improved by Trauthwein (arXiv:2212.03782);
(ii) aforementioned  moment estimates of Malliavin derivatives
up to second order.
We also establish a corresponding
functional central limit theorem by (a) showing the convergence in
finite-dimensional distributions  and (b) verifying Kolmogorov's
tightness criterion.   Part (a) is made possible by
a linearization trick
and the univariate  second-order Poincar\'e inequality,
while part (b) follows from a standard moment estimate with
an application of Rosenthal's inequality.

\end{abstract}

\date{\today}

\maketitle

\vspace*{-8mm}

\tableofcontents

\section{Introduction}\label{SEC1}


\subsection{Stochastic linear wave equation with pure-jump  L\'evy white  noise}

Stochastic partial differential equations (SPDEs)
have been studied intensively in the last 30 years, using different approaches.
In the semigroup approach (developed in \cite{DZ92}) or
the variational approach (pioneered in \cite{Par75} and
developed further in \cite{KR79}),
the solution and the noise are processes,
which evolve in time and take values in a Hilbert space.
The random field approach
(initiated by Walsh \cite{Walsh86} and developed further by Dalang \cite{Dalang99})
deviates significantly from these approaches by
proposing a different framework  for
viewing  the noise and the solution.
In Walsh-Dalang's approach, the solution is a space-time indexed  process
(i.e. a random field) and
the noise is a process
indexed by subsets  of the space-time domain (or functions on this domain).
We refer the readers to \cite{PR07, DKMNX09, RL18}
for an overview of the study of SPDEs using these approaches; see
also the paper \cite{DQS11}
for their close connections.
Regardless of the approach,
one can think of the noise (and the initial condition) as the input,
and the solution as the output.
One of the fundamental problems for SPDEs
is the well-posedness problem (i.e. existence, uniqueness, and stability
under perturbation
of the initial data and/or the noise).
And probabilists have been driven to study/discover
new properties of the SPDE solutions, for example,   {\it stationarity, ergodicity}, and
{\it  intermittency property} (i.e. exponential growth of the $p$-th moment
for large time), to name a few.

Various classes of processes have been proposed as models for the noise
perturbing a partial differential equation,
often derived by an analogy with the noises appearing in the classical SDEs:
Brownian motion, L\'evy processes, and fractional Brownian motions.
But the introduction of the infinite dimensional (and spatial) component
changes drastically the problem and leads to  new challenges.
The class of SPDEs perturbed by L\'evy noise
have been studied extensively in the monograph \cite{PZ07}
using the semigroup approach,
where they are naturally interpreted as  extensions of SDEs
driven by L\'evy processes.
One way of which L\'evy noise occurs is in the so-called Schr\"odinger
problem of probabilistic evolution, and several relativistic Hamiltonians
are known to generate L\'evy noises; see, for example, \cite{GKO95}.

In the present article, we will take Walsh-Dalang's random field perspective
and study the following stochastic linear wave equation with a {\it  multiplicative
L\'evy noise} on $\R_+\times\R$:

\noi
\begin{align}
\begin{cases}
&\dt^2 u(t,x) = \dx^2 u (t,x)+u(t,x)\dot{L}(t,x),
 \,\,\, (t, x)\in (0,\infty) \times \R  \\[1em]
&\text{$u(0,x)=1$ \,\,\, and  \,\,\, $\dt u(0,x)=0$, $x\in\R$,}
\end{cases}
\label{wave}
\end{align}

\noi
where $\dot{L}$ denotes a space-time pure-jump L\'evy white noise and the product
 $u \dot{L}$ is interpreted in  It\^o  sense.
 The equation \eqref{wave} is also known as the {\it hyperbolic Anderson model},
 by an analogy of the parabolic Anderson model with the wave operator
 $\dt^2- \dx^2$ replaced by the heat operator $\dt - \dx^2$.

Let us briefly set up the framework.
Let $\cB_0(\R_+\times\R)$ denote the collection of
Borel subsets $A$ of  $\R_+ \times \R$ with $\Leb(A)<\infty$,
where $\Leb$ denotes the Lebesgue measure on $\R_+\times\R$.
Let

\noi
\begin{align}
Z=\R_{+} \times \R \times \R_0, \quad \cZ= \textup{Borel $\sigma$-algebra on $Z$},
 \,\,\, {\rm and} \,\,\,
 \fm={\rm Leb}   \times \nu,
\label{ZZm}
\end{align}

\noi
where
the space $\R_0:=\R\verb2\2\{0\}$ is equipped with the distance $d(x,y)=|x^{-1}-y^{-1}|$, and $\nu$
 is a $\sigma$-finite   measure on $\bR_0$ subject to

\noi
\begin{align}
\label{LevyM}
\int_{\bR_0}\min (1,|z|^2)\nu(dz)<\infty.
\end{align}

\noi
Let $N$ be a Poisson random measure on the space $(Z,\cZ)$ with intensity $\fm$,
and let $\widehat{N}=N- \fm$ be the compensated version of $N$;
see  Definition \ref{def:PRM} for more details.
Fix   $b\in\R$.
 For $A\in\cB_0(\R_+\times\R)$,
we define

\noi
\begin{align}\label{LbA}
\begin{aligned}
L_b(A)
&\equiv \int_{\R_+\times\R} \ind_A(t,x) \dot{L}_b(t, x)dtdx
\equiv \int_{\R_+\times\R} \ind_A(t,x) L_b(dt, dx) \\
&= b  \cdot \Leb(A)+\int_{A \times \{|z|\leq 1\}}z \widehat{N}(dt,dx,dz)+\int_{A \times \{|z|> 1\}}z N(dt,dx,dz),
\end{aligned}
\end{align}

\noi
which is an {\it infinitely divisible} random variable with
\begin{align}
\begin{aligned}
\bE\big[ e^{i \lambda L_b(A)} \big]
&  = \exp\bigg(
i \lambda b \Leb(A) + \Leb(A) \int_{|z|\leq 1} (e^{i \lambda z} - 1 - i\lambda z) \nu(dz) \\
&\qquad\qquad\qquad +\Leb(A) \int_{|z| > 1} (e^{i \lambda z} - 1 ) \nu(dz)
\bigg)
\end{aligned}
\label{IDlaw}
\end{align}
for any $\lambda\in\R$.\footnote{In \eqref{LbA}, 
the stochastic integral over $A\times \{ |z| \leq 1 \}$ lives in 
the first Poisson Wiener chaos $\mathbb{C}_1$ and 
coincides with $I_1(\phi)$,
where  $\phi(t, x, z) = \ind_A(t,x) z \ind_{\{|z| \leq 1 \}}$
belongs to $L^2(Z, \mathcal{Z}, m)$ in view of the condition
\eqref{LevyM}; see Subsection \ref{SUB21} for more details. 
The other stochastic integral over $A\times\{|z| >1\}$  defines 
a finite compound Poisson  random variable with
characteristic function given by 
$\lambda\in\mathbb{R}\mapsto \exp ( \Leb(A) \int_{|z| > 1} (e^{i \lambda z} - 1 ) \nu(dz)
 )$, since $N$, restricted to $A\times\{ |z| >1 \}$, is a Poisson
 random measure with finite intensity measure;
see, for example, \cite[Proposition 19.5]{Sato99}.\label{ft1}}
Besides, one can easily verify that for any $p>0$,

\noi
\begin{align}
 \bE\big[ | L_b(A) |^p \big]<\infty
 \,\,\,
   \Longleftrightarrow
\,\,\,  M_p:=\int_{|z|>1}|z|^p\nu(dz)<\infty.
\label{def:MP}
\end{align}
See Appendix  \ref{APP} for    a proof of \eqref{def:MP}.
  In particular,
 $L_b(A)$ has finite variance if and only if $M_2<\infty$.
   In fact, throughout this paper,
 \begin{center}
we always assume that $M_2 <\infty$.
\end{center}

\noi
By choosing $b = - \int_{|z|>1} z\nu(dz)$,\footnote{This
integral is finite due to the condition \eqref{LevyM} and $M_2<\infty$.}
we put
\begin{align}
L(A) = \int_{A\times \R_0} z \wh{N}(dt, dx, dz),
\label{LA1}
\end{align}
which has mean zero and differs from \eqref{LbA} by a constant.
We say that
\begin{center}
$\{L(A): A \in \cB_0(\R_+ \times \R)\}$ is
 a  {\bf pure-jump  space-time L\'evy   noise}.
 \end{center}
Note that \eqref{LbA}
is the analogue of the {\em L\'evy-It\^o   decomposition}
(\cite[Theorem 19.2]{Sato99})
 of a classical L\'evy process
$X=\{X(t)\}_{t\geq 0}$ without   a Gaussian component, whereas \eqref{IDlaw} is the analogue of the
{\em L\'evy-Khintchine formula} (\cite[Theorem 8.1]{Sato99}).
In    the classical L\'evy process  setting,  there is no space component $x\in\R$, and
the corresponding Poisson random measure on $\R_{+} \times \R_0$
with intensity $\Leb\times\nu$ contains information about the location
and the size of the jumps of $X$.
That being said,  we also call $\nu$ the  jump intensity measure
for the space-time L\'evy noise $L$.

In  \cite{BN16},   the first author and Ndongo proved
the existence, uniqueness, and intermittency property
for the stochastic  nonlinear wave equation
 in dimension $d=1$,
 i.e. with $u\dot{L}$ replaced by $\s(u)\dot{L}$,
 where $\s:\R\to\R$  is Lipschitz.
For a general L\'evy noise, the existence of the solution of
the wave equation in dimension $d\leq 2$ was established in \cite{B21},
together with some path properties.

In this article, we consider the hyperbolic Anderson model \eqref{wave}
and establish the first ergodicity and central limit theorem in
a finite-variance setting, namely,  when $M_2<\infty$.
In view of the condition \eqref{LevyM}, we assume the following
equivalent condition throughout this paper:

\noi
\begin{align}
m_2:=\int_{\R_0} |z|^2 \nu(dz)   \in (0,\infty).
\label{m2}
\end{align}

\noi
$\bul$ {\bf Mild solution.}
We say  that $u$ is a (mild) {\em solution}
to hyperbolic Anderson model \eqref{wave} if
$u=\{ u(t,x): (t,x)\in\R_+\times\R\}$
is a predictable\footnote{Predictability is defined with respect
to the filtration generated by the noise $L$; see \eqref{filtraF}. } process
with $u(0,x)=1$
for any $x\in \bR$ such that
for any $t>0$ and $x \in \bR$, we have

\noi
\begin{align*}
u(t,x)=1+\int_0^t \int_{\R}G_{t-s}(x-y)u(s,y)L(ds,dy),
\end{align*}

\noi
almost surely,
where
\begin{align}
G_t(x)=\frac{1}{2} \ind_{\{|x|<t\}}
\label{FSol}
\end{align} is the fundamental solution
to the deterministic wave equation on $\R_{+} \times \R$,
and the stochastic integral is interpreted in the  It\^o sense,
which is a particular case of the Kabanov-Skorohod integral;
see   Lemma \ref{lem:CE2} (iv).
This mild formulation
 was introduced in \cite{Walsh86},
being motivated by the   Duhamel's principle in   PDE theory.
Since the stochastic integral has zero-mean,
\[
\bE[u(t,x) ] =1 \,\,\, \mbox{for any $(t,x)\in\R_+\times\R$}.
\]

Throughout this paper,  we make
the following convention:

\noi
\begin{align}
\text{$G_t(x)=0$ for all $t\leq 0$ and $x \in \R$}.
\label{convention}
\end{align}

By Theorem 1.1 of \cite{BN16}, the equation \eqref{wave}
has a unique solution satisfying
\[
\sup_{(t,x) \in [0,T] \times \R}\bE[|u(t,x)|^2] <\infty \quad \mbox{for any} \ T>0.
\]
Put
\begin{equation}
\label{mp}
m_p:=\int_{\R_0}|z|^p\nu(dz) \quad\text{for $p\in[1,\infty)$.}
\end{equation}

\noi
The same theorem shows that if $m_p<\infty$ for some finite $p\geq 2$, then

\noi
\begin{align}
\label{KPT}
K_p(T):=\sup_{(t,x)\in [0,T] \times \R} \big( \bE[|u(t,x)|^p] \big)^{\frac1p}<\infty \quad \mbox{for any} \ T>0.
\end{align}
See \cite{BN16,BN17} for more details. See also Remark \ref{rem:main} ({\bf a}) for a discussion
on the finiteness of $m_p$ for $p\in [1, \infty).$
It is   known that due to the linearity of the noise in $u$,
 the solution $u(t,x)$ to \eqref{wave}   admits the following Wiener chaos expansion:

\noi
\begin{align}
u(t,x) = \sum_{n\geq 0} I_n(F_{t,x,n}) ,
\label{u1}
\end{align}

\noi
where $F_{t,x,0}=1$ and for $n\in\N_{\geq 1}$,
the (non-symmetric) kernel $F_{t,x,n}(\pmb{t_n},\pmb{x_n}, \pmb{z_n})$ is given by

\noi
\begin{align}
F_{t,x,n}(\pmb{t_n},\pmb{x_n}, \pmb{z_n})
&=G_{t-t_n}(x-x_n)z_n \ldots G_{t_2-t_1}(x_2-x_1)z_1
\ind_{\{ t> t_n > ...> t_1>0\}};
\label{KER:F}
\end{align}

\noi
see \cite{BN17} and see also Subsection \ref{SUB22}.
 From the orthogonality relation (see \eqref{int3d}) with $\wt{F}_{t,x,n}$ denoting
 the symmetrization of $F_{t,x,n}$ (see \eqref{defh2}), we see that

\noi
\begin{align}
\begin{aligned}
{\rm Cov}(u(t,x),u(s, y))
=\sum_{n\geq 1}n!  \langle \widetilde{F}_{t,x,n},\widetilde{F}_{s,y,n} \rangle_{L^2(Z^n)}.
\end{aligned}
\label{rho_t}
\end{align}
 Note that $\textup{Cov}(u(t, x), u(t, 0)) =0$  when $|x| > 2t$, 
which can be seen from the definition of $F_{t,x,n}$ in {\rm (1.14)}, convention {\rm(1.10)}, 
definition \eqref{FSol} of $G$,
and an application of triangle inequality.
Moreover,
it is not difficult to see from \eqref{KER:F} that the covariance \eqref{rho_t}
depends on $(x,y)$ only via the difference $x-y$.
This hints that
for any fixed $t\in\R_+$, the process $\{u(t,x)\}_{x \in\R}$ is  stationary.
In fact, as we will see in Lemma \ref{lem:stat},
the process $\{u(t,x)\}_{x \in\R}$ is  {\it strictly stationary}
 in the sense that
for any  $x_1, ... , x_m, y \in\R$ with any $m\in\N_{\geq 2}$,

\noi
\begin{center}
$( u(t, x_1 +y), ... , u(t, x_m + y)  ) = ( u(t, x_1), ... , u(t, x_m)  ) $ in law.
\end{center}
Then, it is natural to define an associated family of shifts $\{\th_y\}_{y\in\R}$
by setting

\noi
\begin{align}
\th_y\big(  \{ u(t, x)\}_{x\in\R} \big) :=   \{ u(t, x+y)\}_{x\in\R}, \label{shifty}
\end{align}

\noi
which preserve the law of the (spatial) process.  Then,
the following question arises:
\begin{align}
\textit{Are the invariant sets for $\{\th_y\}_{y\in\R}$  trivial?
\textup{(i.e.} is $u(t,\bul)$ spatially ergodic?\textup{)}}
\label{quest}
\end{align}

\noi
One can refer to,  for example,  the book \cite{Peter89} for
an introduction to the ergodic theory.

\medskip

To the best of authors' knowledge, the question \eqref{quest}  of spatial ergodicity
has not been investigated for the hyperbolic Anderson model
\eqref{wave} driven by L\'evy noise. See the work \cite{NZ20} by Nualart
and the second author for the study of stochastic nonlinear wave
equation driven by Gaussian noises and see also \cite{CKNP21} for
similar study for parabolic SPDEs.
In this paper, we present the first ergodicity result
for the equation \eqref{wave}, and thus answer the question affirmatively;
see Theorem \ref{thm:main} (i).
Consequently, the spatial ergodicity implies the following first-order
fluctuation (`law of large number type'):  letting
\noi
\begin{align}
F_R(t) := \int_{-R}^R \big( u(t,x) - 1   \big)dx,
\label{FRT}
\end{align}
we have

\noi
 \begin{align}
\frac{F_R(t)}{R} 
\to 0 \,\, \text{in $L^2(\Om)$ {and almost surely} as $R\to\infty$.}
\label{LLN}
\end{align}
See also Remark \ref{rem:erg}.  After establishing the first-order fluctuation,
it is natural to investigate the second-order fluctuation:
we will show that $F_R(t) $ (with $t>0$) admits Gaussian fluctuation as $R\to\infty$;
see Theorem \ref{thm:main} (iii). The central limit theorems (CLT)  therein are of
quantitative nature, described by Wasserstein distance and Kolmogorov distance.
We are also able to obtain a functional CLT (see part (iv) in Theorem \ref{thm:main}).

 \subsection{Main results}
 Now we are ready to state the main theorem in this paper.

\begin{theorem} \label{thm:main}
Recall the definition of  $m_p$ in \eqref{mp} and assume $0< m_2 < \infty$  as in \eqref{m2}.
Let $u$ solve the hyperbolic Anderson model \eqref{wave}. Then,
the following statements hold.

\smallskip
\noi
{\rm (i)} Fix $t\in\R_+$. Then,
 $\{ u(t,x): x\in\R\}$ is
 strictly stationary and ergodic.

\smallskip
\noi
{\rm (ii)} The spatial integral $F_R(t)$,
defined in \eqref{FRT} has the following limiting covariance: 
\[
\lim_{R \to \infty}\frac{1}{R}\bE[F_R(t)F_R(s)] =\Sigma_{t,s} \quad \mbox{for any $t,s\geq 0$},
\]
 where $\Sigma_{t,s}$ is given by  

\noi
\begin{align}\label{COV_S}
\Sigma_{t,s} := 2 m_2	\int_0^{t\wedge s} (t-r) (s-r) \cosh\bigg(r \sqrt{ \frac{m_2}{2} } \, \bigg) dr.
\end{align}

\noi
In particular, 
$\s_R^2(t) := \Var (F_R(t)) \sim  \Sigma_{t,t} R$ as $R\to\infty$. 

\smallskip
\noi
{\rm (iii)} Assume additionally  that
\begin{align}\label{cond:al}
 \text{$m_{2+2\al}$ and $m_{1+\al}$ are finite  for some $\al\in(0,1]$.}
 \end{align}

 \noi
 Fix $t\in(0,\infty)$.
Then, the spatial integral $F_R(t)$
admits Gaussian fluctuation as $R\to\infty$. More precisely, $F_R(t)/\s_R(t)$
converges in law to the standard normal distribution $\NN(0,1)$.
Moreover, the following rates of convergences hold:

\noi
\begin{align}
\textup{dist}\Big( \frac{F_R(t)}{ \s_R(t) } , \NN(0,1)   \Big)
\les  {R^{-\frac\al{1+\al}}} ,
\label{CLT}
\end{align}

\noi
where the implicit constant in \eqref{CLT} does not depend on $R$
and  one can choose the distributional metric $\textup{dist}$ to be  one of the
following:
Fortet-Mourier distance, $1$-Wasserstein distance, and Kolmogorov distance;
see Subsection \ref{SUB23} for the definitions of these distances.

\smallskip
\noi
{\rm (iv)} For any fixed $R\geq 1$,
the process  $\{ F_R(t) \}_{t\in\R_+}$ admits a
locally $\be$-H\"older continuous modification for any $\be\in(0, \frac12)$.
Let $\cG := \{\cG_t\}_{t\in\R_+}$ denote a real
centered continuous Gaussian process
with covariance $\bE[\cG_t \cG_s]=\Sigma_{t,s}$.
Moreover, under the assumption \eqref{cond:al},
the process $\{ \frac{1}{\sqrt{R}}F_R(t) \}_{t\in\R_+}$
converges in law to $\cG$ in the space $C(\R_+; \R)$ as $R\to\infty$.\footnote{The
space $C(\R_+; \R)$ consists  of continuous functions from $\R_+$ to $\R$.
Equipped with the compact-open topology (the topology of uniform
convergence on compact sets), the space $C(\R_+; \R)$ is Polish
(i.e. a complete separable metrizable topological space).
}

\end{theorem}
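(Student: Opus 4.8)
The plan is to split the proof of part (iv) into three tasks: (1) for each fixed $R\ge 1$, the construction of a locally $\be$-H\"older modification of $t\mapsto F_R(t)$ for every $\be\in(0,\tfrac12)$, which makes the prelimit processes genuine $C(\R_+;\R)$-valued random variables; (2) convergence of the finite-dimensional distributions of $\{R^{-1/2}F_R(t)\}_{t\in\R_+}$ to those of $\cG$; and (3) tightness of the laws of $R^{-1/2}F_R(\cdot)$ in $C(\R_+;\R)$. Since convergence of finite-dimensional distributions together with tightness on each $C([0,T];\R)$ implies convergence in law in $C(\R_+;\R)$ equipped with its compact-open topology, tasks (2) and (3) give the assertion, and (1) makes the statement meaningful.

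Tasks (1) and (3) both rest on a moment bound for the temporal increments of $F_R$. For $0\le s<t$, rewriting $u(t,x)-u(s,x)$ through the mild formulation and interchanging $\int_{-R}^R dx$ with the stochastic integral by a stochastic Fubini theorem gives
\[
F_R(t)-F_R(s)=\int_0^t\!\!\int_\R g_{R,t,s}(r,y)\,u(r,y)\,L(dr,dy),
\qquad
g_{R,t,s}(r,y):=\int_{-R}^R\big[G_{t-r}(x-y)-G_{s-r}(x-y)\big]\,dx.
\]
Since $G_\tau(w)=\tfrac12\ind_{\{|w|<\tau\}}$, by convention \eqref{convention} the increment $G_{t-r}-G_{s-r}$ is supported on a thin annular set, and an elementary computation --- with care taken at the boundary layer near $\pm R$ where the light cone is clipped --- yields $\int_0^t\!\int_\R g_{R,t,s}^2\,dy\,dr\lesssim_T R(t-s)^2$ and $\int_0^t\!\int_\R |g_{R,t,s}|^p\,dy\,dr\lesssim_T R(t-s)^p$ for $t\le T$ and $p\ge 2$. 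Feeding the first bound into the It\^o isometry of Lemma~\ref{lem:CE2} and the uniform estimate $\bE[u(r,y)^2]\le K_2(T)^2$ gives $\bE[|F_R(t)-F_R(s)|^2]\lesssim_T R(t-s)^2$, so the Kolmogorov--Chentsov theorem produces, for each fixed $R$, a modification of $F_R$ that is locally $\be$-H\"older for every $\be<\tfrac12$; this settles (1) using only $m_2<\infty$. For (3) I would instead apply Rosenthal's inequality for the compensated-Poisson integral with $p=2+2\al$ (finite by \eqref{cond:al}), bounding $\bE[|F_R(t)-F_R(s)|^p]$ by an $L^2$-term $\lesssim m_2^{p/2}K_p(T)^p\,(R(t-s)^2)^{p/2}$ and a jump term $\lesssim m_pK_p(T)^p\,R(t-s)^p$. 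After dividing by $R^{p/2}$ the first term is $\lesssim(t-s)^p$ and the second is $\lesssim R^{-\al}(t-s)^p$, so $\bE\big[|R^{-1/2}(F_R(t)-F_R(s))|^{2+2\al}\big]\lesssim_T(t-s)^{2+2\al}$ uniformly in $R\ge 1$; together with $R^{-1/2}F_R(0)=0$, Kolmogorov's tightness criterion then gives tightness on each $C([0,T];\R)$, proving (3).

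For (2) I would use the Cram\'er--Wold device: for times $t_1,\dots,t_k$ and reals $a_1,\dots,a_k$ it suffices to show that $R^{-1/2}\sum_{j}a_j F_R(t_j)$ converges to a centered Gaussian with variance $\sum_{i,j}a_ia_j\Sigma_{t_i,t_j}$. The linearization trick is to write $\sum_j a_jF_R(t_j)=\int_{-R}^R\Psi(x)\,dx$ with $\Psi(x):=\sum_j a_j\big(u(t_j,x)-1\big)$ a centered, spatially stationary field, so that the \emph{univariate} second-order Poincar\'e inequality used in part (iii) applies directly to this single spatial integral. Its right-hand side is governed by moments of $D\Psi=\sum_j a_jDu(t_j,\cdot)$ and $D^2\Psi=\sum_j a_jD^2u(t_j,\cdot)$, and the resulting bound on the distance between the normalized variable and $\NN(0,1)$ tends to $0$ as $R\to\infty$ by the same Malliavin-derivative estimates already established for part (iii). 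Combined with the covariance convergence of part (ii), i.e. $R^{-1}\mathrm{Cov}\big(\sum_i a_iF_R(t_i),\sum_j a_jF_R(t_j)\big)\to\sum_{i,j}a_ia_j\Sigma_{t_i,t_j}$, this identifies the Gaussian limit and yields (2). The continuity of $(t,s)\mapsto\Sigma_{t,s}$ in \eqref{COV_S}, together with $\Sigma_{t,t}-2\Sigma_{t,s}+\Sigma_{s,s}\lesssim_T(t-s)^2$ and Gaussianity, also guarantees that $\cG$ has a continuous modification, so the limit is well defined in $C(\R_+;\R)$.

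The main obstacle is the uniform-in-$R$ increment estimate behind (1) and (3): one must extract precisely the scalings $R(t-s)^2$ and $R(t-s)^p$ from the spatial integral of the annular kernel increment $g_{R,t,s}$ --- handling the boundary layer near $\pm R$ --- and one must check that the heavy-tail contribution from Rosenthal's inequality, which carries the factor $m_p$ and scales like $R^{-\al}(t-s)^p$ after normalization, stays bounded uniformly in $R$; this is exactly where $p>2$ and the moment hypothesis \eqref{cond:al} are needed. By contrast, task (2) is essentially inherited from part (iii) once the linearization is in place and requires no new Malliavin computations.
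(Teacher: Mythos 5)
Your proposal is correct and follows essentially the same route as the paper: a locally $\be$-H\"older modification via the $p=2$ increment bound and Kolmogorov's continuity theorem, finite-dimensional convergence via the Cram\'er--Wold device, the linearization trick, and the univariate second-order Poincar\'e inequality applied to $X_R=\sum_j a_j F_R(t_j)/\sqrt{R}$ (identifying the limit through the covariance convergence of part (ii)), and tightness via Rosenthal-type increment bounds plus the Kolmogorov--Chentsov criterion. The only cosmetic difference is that the paper runs the tightness estimate with $p=2$, so that tightness requires only $m_2<\infty$; your choice $p=2+2\al$ is valid under \eqref{cond:al} but unnecessary, and your closing remark that $p>2$ and \eqref{cond:al} are "exactly where needed" for tightness overstates the requirement.
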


Theorem \ref{thm:main}  presents the first result of spatial ergodicity
and the (quantitative) central limit theorem for SPDEs driven by
space-time L\'evy noise.  Our work is motivated by
a recent line  of investigations for SPDEs with Gaussian noise.
In \cite{HNV20}, Huang, Nualart, and Viitasaari
initiated the study of central limit theorems for SPDEs in Dalang-Walsh's
random field framework. More precisely, they established
the first Gaussian fluctuation result
 for the spatial integral of the  solution
to a   stochastic  nonlinear heat equation driven by space-time Gaussian white noise.
Since then,  we have witnessed a rapidly growing
literature on similar CLT results for heat equations with various
Gaussian homogeneous
noises; see, for example,
\cite{HNVZ20,NZejp,  NSZ, CKNP21, CKNP22, CKNPjfa,
ANTV22, NXZ22, PU22, LP22}.
Meanwhile, such a program was   carried out by Nualart,
the second author, and their collaborators
to investigate the stochastic  (nonlinear)  wave equation driven by Gaussian noises;
see \cite{DNZ20,BNZ21, NZ22, NZ20, BNQSZ}.
All these references address SPDEs (heat or wave) with Gaussian   noises, and 
currently we have already seen a well-developed strategy based on Gaussian analysis,
Malliavin calculus, and Stein's method.

In the present article, we  carry out a similar program for the  SPDE
with L\'evy noises,
by first investigating the hyperbolic Anderson model \eqref{wave}
with multiplicative
space-time L\'evy   noise of pure-jump type. This setting is much more 
complicated than the Gaussian setting, since various tools from Gaussian analysis 
(such as the neat chain rule of Malliavin derivative operator, hypercontractivity property 
of the Ornstein-Uhlenbeck semigroup) are no more available. 
Another technical difficulty is that unlike the Gaussian setting in aforementioned references,
the random field solution to \eqref{wave} does not have finite moment of any order, unless
we impose restrictive conditions on the L\'evy measure of the L\'evy noise 
(see, e.g., \eqref{mp}-\eqref{KPT}).
As such, we choose to first consider the finite-variance setting,
in which we develop an $L^2$ theory of Malliavin calculus
associated with the  space-time L\'evy   noise.
Our approach is  then built  on some  recent results  of
Malliavin calculus on the Poisson space
(see \cite{PSTU10, PT13, Last16, LPS16,  DP18, DP18b, DVZ18, LP18, LMS22, Tara}).
 Our main tool is a second-order Poincar\'e inequality first derived
 in \cite{LPS16} by Last, Peccati, and Schulte and recently improved
 by Trauthwein \cite{Tara}. In this paper, we combine these second-order Poincar\'e inequalities
  with some key moment estimates for the Malliavin derivatives of the solution
 (relations \eqref{D1est} and \eqref{D2est} below).
 These new moment estimates are obtained using the explicit chaos expansions of
 these Malliavin derivatives,
 and a connection with the solution to  the stochastic wave equation
 with delta initial velocity
 (which is studied in Section \ref{SUB31} and may be of independent interest).
 This line of arguments in establishing the moment estimates of Malliavin derivatives 
 of SPDE solutions deviates greatly from those in \cite{BNQSZ},
 which rely heavily on the hypercontractivity property (Wiener chaos estimates)
 applied to the explicit form of the Malliavin derivatives. 
 Note that in general the Ornstein-Uhlenbeck semigroup does not satisfy
 the  hypercontractivity property in the Poisson setting except in some restrictive 
 framework; see, for example, \cite{NPY20}.

 In the case of the  stochastic nonlinear wave equation
(with $u\dot{L}$ replaced by $\s(u)\dot{L}$ in \eqref{wave}),
the solution does not have an explicit chaos expansion,
so that the approach in current paper is not applicable. 
And it is not straightforward at all (even in our linear setting) to adapt
the method in, e.g., \cite{HNV20, DNZ20, BNZ21, CKNP22, CKNPjfa} for establishing
similar CLTs for the  wave equation with L\'evy noises. 
The immediate obstacle arises due to a lack of derivation property
of the Malliavin derivative operator (i.e. no neat chain rule; see Remark \ref{rem:add1} (iii)),
and then in the process of bounding the Malliavin derivative of 
the nonlinear solution, we will encounter
 the term $D_{s,y, z} \s( u(r, \w) )$ that appears in the equation
 for Malliavin derivative
 \[
D_{s, y, z}  u(t,x) = G_{t-s}(x-y)z \s\big(  u(s,y)  \big) 
+ \int_0^t \int_{\R}  G_{t-r}(x-\w) D_{s,y, z} \s( u(r, \w) ) L(dr, d\w).
\]
We plan to investigate this problem in a future project.
Another interesting and more challenging direction is
to investigate  the infinite-variance setting;
for example, one may begin with the  hyperbolic Anderson model
\eqref{wave} with  $L$   replaced by  a $\al$-stable  L\'evy noise (see \cite{B14}).
We expect that some {\it noncentral} limit theorems would arise.
In the recent work \cite{DT22},
Dhoyer and Tudor   considered
a stochastic heat equation with Rosenblatt noise and
established a noncentral limit theorem with the limiting process
being a Rosenblatt process that lives
in the second  Gaussian Wiener chaos and thus has all the moments.  We expect  it to be much more difficult
to obtain the conjectured noncentral limit theorem in the aforementioned
infinite-variance setting.

 At the end of this introduction, let us also mention that  the stochastic
heat equation with multiplicative  L\'evy noises   $\s(u) \dot{L}$, with $\s$ Lipschitz,
has  been  studied in a series of recent papers.
The existence of the solution was proved in  \cite{chong17},
weak intermittency property was established in \cite{chong-kevei19},
some path properties were obtained in \cite{CDH19},
and the exact tail behavior was described in \cite{chong-kevei22} in the case
of additive noise (i.e. when $u\dot{L}$ is replaced by $\dot{L}$).
Uniqueness and strong intermittency of the solution were obtained in \cite{BCL}
in the case of multiplicative noise when $\sigma(u)=u$.
All these results are valid for a general L\'evy noise with
possibly infinite variance (such as the $\alpha$-stable L\'evy noise).
See also earlier investigations \cite{Mueller98, My02, MMS06}
by Mueller, Mytnik, and Stan. 

\medskip

We conclude this introduction with several remarks.

\begin{remark} \label{rem:pam}  
\rm
 In \cite{chong-kevei19}, the authors studied the moment asymptotics for the solution  to
the stochastic heat equation driven by a     space-time   L\'evy white noise
 (with a Gaussian component), 
 whose L\'evy measure $\nu$ satisfies the condition $m_{p}<\infty$ 
 for some $p \in [1,1+\tfrac{2}{d})$. 
 If $d\geq 2$, this value $p$ is strictly smaller than 2, 
 the noise may have infinite variance, 
 and the CLT becomes problematic. 
 (Even in the classical case of i.i.d. random variables, 
 the CLT holds if and only if the variable is the domain of attraction of the normal law, 
 which means that its variance is finite, or its truncated variance is slowly varying.) 
  When $d=1$ and $m_p<\infty$ for some $p \in [2,3)$, 
 the L\'evy noise has finite variance; then 
 it might be possible to prove a CLT 
(similar to the one given by Theorem \ref{thm:main}) 
for the solution to the stochastic heat equation with L\'evy noise.
As far as we know, this problem
has not been treated in the literature, even in case of the parabolic Anderson model (PAM). 
A key step is to obtain the estimates
\eqref{D1est} and \eqref{D2est} for the Malliavin derivatives of the  PAM  solution.
 In the present article, these estimates are derived using 
the connection with the solution $v$ of the  wave equation with Dirac initial velocity, 
and the crucial identity \eqref{wave_dl5}, which  heavily relies on  the fact that the fundamental
wave solution $G$ is an indicator function  (see \eqref{FSol}). 
  Therefore, one needs a  different method   to study  the CLT problem for the heat equation.

\end{remark}

 \begin{remark} \label{rem:main}
 \rm
 ({\bf a}) In view of \eqref{LevyM} and interpolation, one can deduce that
 \[
 m_p < \infty \Longrightarrow m_q <\infty
 \]
 for $2\leq q\leq p < \infty$. In particular,
 the condition  that $m_{2+2\al}<\infty$ for some $\al>0$
 implies the finiteness of   $m_2$.
 However, the finiteness of $m_{1+\al}$  in \eqref{cond:al} 
 with $\al\in(0,1)$ and that of $m_2$
 are independent in general,  illustrated by the following example.
Consider, for example,
\[
\nu_{a,b}(dz) = \big( c_1 |z|^{-a-1} \ind_{\{ 0< |z| \leq 1 \}} + c_2 |z|^{-b-1} \ind_{\{  |z| > 1 \}} \big)dz,
\]
where  $c_1, c_2\in\R_+$. 
It is easy to verify that $\nu_{a,b}$ is a L\'evy measure satisfying \eqref{LevyM}
if and only if $a<2$ and $b>0$. In this case ($a<2$, $b>0$, and $0< \al < 1$), 
we can further    verify that

\noi
\begin{align*}
\int_{\R} |z|^{1+\al} \nu(dz) <\infty &\Longleftrightarrow a < 1+ \al < b,  \\
\int_{\R} |z|^2 \nu(dz) <\infty &\Longleftrightarrow  a< 2 < b.
\end{align*}

\noi
It is also clear that the assumption \eqref{cond:al} holds if and only if $a< 1+\al$ and $b>2+2\al$.

\smallskip
\noi
({\bf b}) Assume $m_4<\infty$, then one can prove (functional) CLTs by using the chaotic 
central limit theorems in the spirit of \cite[Section 8.4]{NN18}.\footnote{The chaotic CLT there addresses 
the weak convergence of Gaussian functionals, 
while we are dealing with the Poisson functionals that 
will lead to more complicated computations of contractions.} 
More concretely, one can prove  ``$F_R(t)/\s_R(t) \to \NN(0,1)$'' 
(a qualitative result compared to \eqref{CLT}) as follows:
\begin{itemize}

\item[(i)] With explicit chaos expansion \eqref{u1} of $u(t,x)$, we can write down the chaos expansion
of  $F_R(t)/\s_R(t)$ in the following form:
\[
F_R(t)/\s_R(t) = \sum_{n=1}^\infty I_n( g_{n, R})
\]
with unique symmetric kernels $\{ g_{n, R}: n\geq 1 \}$.

\item[(ii)] It is not difficult to show that the tail in the above series can be uniformly controlled, meaning 
that 
\begin{align} \label{CCLT1}
\lim_{N\to\infty}\sup_{R\geq 1} \Var \sum_{n\geq N} I_n( g_{n, R}) = 0.
\end{align}

\item[(iii)] For any fixed integer $N\geq 2$, the random vector $( I_n( g_{n, R}): n=1,..., N  )$
has diagonal covariance matrix (due to orthogonality \eqref{int3d}) that tends to an explicit 
covariance matrix $\wt{C}$ as $R\to\infty$. Then, the weak convergence of $( I_n( g_{n, R}): n=1,..., N  )$
to a Gaussian vector $\NN(0, \wt{C})$ can be proved if one can show 
\begin{align}\label{CCLT2}
 \bE\big[ I_n( g_{n, R})^4 \big] \xrightarrow{R\to\infty} 3 \wt{C}_{nn}^2.
 \end{align}
This sufficiency is a consequence of the multivariate fourth moment theorem in the Poisson setting
 first established   by D\"obler, Vidotto, and the second author \cite{DVZ18}. 

\item[(iv)] Finally the verification of \eqref{CCLT2} proceeds with an application of product 
formula (for example, the one in \cite{DP18b}), which inevitably requires 
the finite fourth moment assumption (i.e. $m_4 < \infty$ in current context).

\item[(v)] Combining (ii) and (iii) with a triangle inequality yields easily the announced  CLT 
``$F_R(t)/\s_R(t) \to \NN(0,1)$''. 
\end{itemize}

\noi
The convergence in finite-dimensional distributions via chaotic CLTs
can be established in a similar manner, and 
we leave details to interested readers, who shall expect that the computations are 
more involved than what we are doing in the current paper. 
 In the current paper, we have access to bounds on Malliavin derivatives
of solution so that we can take advantage of the recent work \cite{Tara}
to derive the quantitative CLT in \eqref{CLT}. 
We believe that the above road map (i)-(v) 
would be useful in other Poisson context, when we do not have 
Malliavin differentiability.

\smallskip
\noi
({\bf c}) As already mentioned in (b), one of the key technical ingredients in establishing our
quantitative CLTs is the second-order Gaussian Poincar\'e inequalities
by T. Trauthwein \cite{Tara} that improved previous 
work \cite{LPS16} by Last, Peccati, and Schulte. 
Applying the bounds from \cite{LPS16} will force us to impose the condition ``$m_4 < \infty$'',
while the work \cite{Tara} allows us to work with a more general condition
\eqref{cond:al}. In the following, we present a discussion for
the assumptions \eqref{cond:al} when $\al=1$ and when $\al < 1$:
\begin{itemize}
\item
As we know, the L\'evy measure $\nu$ encodes the information of jump sizes.
The assumption `$m_4<\infty$' (that corresponds to $\al=1$ in  \eqref{cond:al}) is equivalent to
the condition `$M_4 <\infty$' (see \eqref{def:MP}), which imposes the condition on the
`large jumps'. See also Appendix \ref{APP}.

 \item If one weakens the condition on the `large jumps' by allowing only $m_{2+2\al}$ to be finite
 with $0< \al < 1$,
 the assumption \eqref{cond:al} indicates that we need to impose  `$m_{1+\al} < \infty$',
   a further condition
 on `small jumps'. 
 Then it is natural to see that these conditions on `large/small jumps'
 affect the rate of convergence to normality. A similar condition involving interacting 
 conditions for the large/small jumps of the noise
was considered in \cite{chong17} in the case of the stochastic heat equation 
driven by a L\'evy noise, 
with possible infinite variance. In \cite{chong17}, 
by requiring that $m_q<\infty$ and $M_p<\infty$ for some $0<q<p <1+ \tfrac{2}{d}$, 
the author was able to construct the solutions  to the equation with truncated noises, 
up to a stopping time, and then paste these solutions to obtain the solution for the equation 
with the general noise.

 \end{itemize}

 \noi
({\bf d})  By using the explicit covariance structure $\Sigma$ of the limiting 
Gaussian process   $\cG$, it is not difficult  to derive that 
  $\cG$ is almost surely  locally $\be$-H\"older
 continuous   for any $\be\in(0,1)$.

 \end{remark}

\begin{remark} \label{rem:erg} \rm
{The $L^2(\mathbb{P})$ and almost sure convergences in \eqref{LLN}
follow from von Neumann's mean ergodic theorem and Birkhoff's pointwise ergodic theorem;
see \cite[Chapter 2]{Peter89} and see also \cite[Chapter XI, Section 2]{Doob53}.
Alternatively,}   we can easily deduce the $L^2(\mathbb{P})$-convergence in
\eqref{LLN} (as $R\to\infty$) from Theorem  \ref{thm:main} (ii).
Moreover, if we   assume   $m_{2+2\al} < \infty$ for some positive $\al>0$,
then we   also have a simple proof of
 the almost sure convergence as $R\in\N\to\infty$:
we first deduce from \eqref{Rosen2b} with $p=2+2\al$ that
\begin{align}
\sum_{k\in\N} \bE\Big[  \frac{|F_k(t) |^{2+2\al}}{k^{2+2\al}} \Big]
\les \sum_{k\in\N} \frac{1}{k^{1+\al}} <\infty,
\notag
\end{align}

\noi
and thus  from Fubini's theorem, it follows that 
\[
\sum_{k\in\N} \frac{|F_k(t)|^{2+2\al}}{k^{2+2\al}} <  \infty
\]
almost surely, which implies that $F_k(t)/k \to 0$ almost surely as $k\in\N \to \infty$.

\end{remark}

\medskip
\noi
$\bul$ {\bf Organization of this paper.} In Section \ref{SEC2},
we introduce the framework,
and include some basic definitions and results regarding:
stochastic analysis on the Poisson space, Poincar\'e inequalities,
and moment inequalities.
In Section \ref{SEC3}, we present moment estimates for the
Malliavin derivatives of the solution.
Section \ref{SEC4} is devoted to the proof of Theorem \ref{thm:main}.

\section{Preliminaries}\label{SEC2}

\subsection{Notations}  \label{SUB21} By $a\les b$, we mean that
$a\leq C b$ for some positive finite constant $C$ that does not
depend on $(a, b)$. And we write $a\sim b$ if $a\les b$ and $b\les a$.
 For conciseness, we write $a\wedge b  = \min(a, b)$ and
$a\vee b = \max(a,b)$
for any $a,b\in\R$. Throughout this paper, we may fix a rich enough probability
space $(\Om, \cF, \PP)$, on which all the random objects in this
paper are defined.  We denote by $\bE$ the associated expectation operator.
For a real-valued random variable
$X\in L^p(\Om, \cF, \PP)$, we write
$\| X\|_p := \| X \|_{L^p(\Om)} =   ( \bE[ | X|^p ]  )^{\frac1p}$ for finite $p\geq 1$,
while $\| X\|_\infty$ is defined as the essential supremum of $X$.
 To indicate that two random objects
$X, Y$
have the same distribution,
we write  $X\eqd Y$; and we write $Y\sim \NN(0,1)$ to mean that
$Y$ is a standard Gaussian random variable.
We denote by $\s\{X\}$ the $\s$-algebra generated
by the random object $X$. For example, $L^2(\Om, \s\{ N\}, \PP)$
denotes the space of real-valued, square-integrable random variables
that are measurable with respect to $\s\{N\}$.

Let $(Z, \mathcal{Z}, \fm)$ be the $\sigma$-finite measure space
given as in \eqref{ZZm}.
 The Poisson random measure $N$, over which 
our space-time pure-jump L\'evy white noise $L$ is built,
is a set-indexed family $\{ N(A): A\in \cZ\}$ of Poisson random variables. 
Alternatively, one can define $N$ as a random variable with values
in the set of point measures. To be more precise, 
let  $\bfN_\s$ be the set of all  $\s$-finite  measures $\chi$
on $(Z, \mathcal{Z})$ with $\chi(B)\in \N_{\geq 0} \cup \{+\infty\}$
for each $B\in\cZ$.
 Let $\mathscr{N}_\s$ be the smallest $\s$-algebra
that makes the mapping
$\chi\in \bfN_\s \mapsto \chi(B)\in[0,\infty]$
measurable for each $B\in\cZ$. Now we are ready to state the definition of the Poisson
random measure that suits our application; see also Remark \ref{rem:add1} for more discussions.  

\begin{definition}\label{def:PRM}

A Poisson random measure with intensity measure $\fm$ is a 
$(\bfN_\s,\mathscr{N}_\s)$-valued random element $N$ defined on a probability space 
$(\Omega, \mathcal{F}, \mathbb{P})$ such that:
\begin{itemize}

\item for each $A\in\cZ$, the random variable $N(A)$
follows a Poisson distribution  with mean $\fm(A)$;\footnote{If $\fm(A)=\infty$,
we set $N(A)=\infty$ almost surely.}

\item  for any finite sequence $A_1, ... , A_k\in\cZ$ of pairwise
disjoint sets, the random variables $N(A_1), ... , N(A_k)$ are independent.

\end{itemize}
For $A\in\cZ$ with $\fm(A)<\infty$, we define $\wh{N}(A) = N(A) - \fm(A)$ and we
call $\wh{N}$   the compensated Poisson random measure on $(Z, \cZ, \fm)$.




\end{definition}

 Assume that $M_2<\infty$  (see \eqref{def:MP} and \eqref{m2}),
 and
let $L=\{L(A): A \in \cB_0(\bR_+ \times \bR)\}$ be the
finite-variance  space-time  L\'evy   noise given
as in \eqref{LA1}. We set $L(1_{A})=L(A)$,
and we extend this definition by linearity to simple functions.
Then, by approximation, for any function
$\varphi \in L^2(\bR_+ \times \bR)$,
we define the stochastic integral
$L(\varphi)=\int_{\bR_+ \times \bR}\varphi(t,x) L(dt,dx)$.
Note that

\noi
\begin{align}
\label{L1}
L(\varphi)=\int_{\bR_+ \times \bR \times \bR_0} \varphi(t,x) z \widehat{N}(dt,dx,dz).
\end{align}

\noi
Similarly to the Gaussian white noise, this integral satisfies an isometry property:
\[
\bE[L(\varphi)L(\psi)]=m_2 \langle \varphi,\psi \rangle_{L^2(\bR_+ \times \bR)}
\]

\noi
with $m_2$ as in \eqref{m2}.
Moreover, the family $\{L_t(A)=L([0,t]\times A): t\geq 0, A \in \cB_0(\bR)\}$
is a worthy martingale measure, as defined in \cite{Walsh86}.
The It\^o-type stochastic integral $\int_0^t \int_{\bR}X(s,x)L(ds,dx)$
with respect to $L$ is well-defined for any predictable process
$X=\{X(t,x): t\geq 0,x\in \bR\}$ with
\[
\bE \int_0^t \int_{\bR}|X(s,x)|^2 dxds<\infty \quad \mbox{for any $t>0$},
\]
and is related to the It\^o-type stochastic integral with respect to $\widehat{N}$
as follows:
\[
\int_0^t \int_{\bR}X(s,x)L(ds,dx)
=
\int_0^t \int_{\bR}\int_{\bR_0}X(s,x)z \widehat{N}(ds,dx,dz).
\]

\noi
Predictability is defined with respect to the filtration $\mathbb{F}$ induced by $N$, given by \eqref{filtraF} below. More concretely,
a predictable process is a process that is measurable with the
predictable $\sigma$-field on $\bR_+ \times \bR\times\R_0$,
which is the
$\sigma$-field generated by linear combinations of elementary
processes of the form

\noi
\begin{align}
V(t,x, z)=Y \ind_{(a,b]}(t) \ind_{A\times\Gamma}(x, z),
\label{simpleX}
\end{align}

\noi
where $0<a<b$,
$A\times\Gamma \in \cB(\bR) \times\cB(\bR_0)$ satisfies
$\Leb(A)+\nu(\Gamma)<\infty$,
and $Y$ is bounded  $\cF_a$-measurable.\footnote{We can additionally
restrict $Y$ to be Malliavin differentiable, in view of Remark \ref{rem:add1} (iii)
and a limiting argument. 
 This additional restriction
will be used in the proof of Lemma \ref{lem:CE2} (iv). \label{ft77}}
We refer readers to \cite{B15,BN16}, and Section 8.7 of \cite{PZ07}
for more details about integration with respect to $L$ and $\wh{N}$.

Recall that the stochastic integral $L(\varphi)$ given by \eqref{L1}
is  a centered and square-integrable random variable with

\noi
\begin{align*}
\Var\big( L(\varphi) \big)
&= \int_{\R_{+} \times \R \times \R_{0}}  | \varphi(t,x)z  |^2  \, dt   dx \nu(dz) \\
&= m_2 \| \varphi \|_{L^2(\R_+\times \R)}^2
\end{align*}

\noi
with $m_2$ as in \eqref{m2}.
Note that $L(\varphi)$ lives in the first Poisson Wiener chaos associated
to the Poisson random measure $N$
and it coincides with the first-order Wiener-It\^o-Poisson integral
 $I_1( \varphi\otimes z )$.  Let us now construct $I_1(\phi)$
  for a deterministic function $\phi\in L^2(Z, \cZ, \fm)$.
First, there is a sequence of simple functions $\{ \phi_n\}_n$ of the form

\noi
\begin{align}
\phi_n = \sum_{j=1}^{M_n} \al_j \ind_{A_j\times B_j\times C_j }
\label{int0}
\end{align}

\noi
with $\al_j\in\R$, $M_n\in\mathbb{N}$, and
$(A_j, B_j, C_j) \in \cB(\R_+) \times \cB(\R)\times \cB(\R_0) $
with finite measure,
 such that  $\phi_n$ converges to $\phi$ in $L^2(Z, \cZ, \fm)$
as $n\to\infty$. Then,
\begin{align}
I_1(\phi_n)  :=  \sum_{j=1}^{M_n}  \al_j   \wh{N}( A_j\times B_j\times C_j )
\label{int1}
\end{align}

\noi
is well defined with $\| I_1(\phi_n) \|_2 = \| \phi_n\|_{ L^2(Z, \cZ, \fm)}$,
and thus
\begin{align}
I_1(\phi) = \lim_{n\to\infty} I_1(\phi_n) \,\,\, \text{in  $L^2(\PP)$}
\label{int2}
\end{align}
is well defined.\footnote{It is clear that
the definition of $I_1(\phi)$ in \eqref{int2} does not depend on
the choice of approximating sequence $\{\phi_n\}_n$.
The same comment applies to the definition of $I_k(h)$ in \eqref{int4}.}
The set $\mathbb{C}_1= \{ I_1(\phi):  \phi\in L^2(Z, \cZ, \fm) \}$
is called the first Poisson Wiener chaos associated with $N$ (or $\wh{N}$).
See   Subsection \ref{SUB22} for higher-order Poisson Wiener chaoses.

We denote by $\cF^0_t$ the $\s$-algebra
generated by the random variables
$N([0,s]\times A \times B)$ with $s \in [0,t]$ and $\Leb(A)+ \nu(B)<\infty$.
And let $\cF_t = \s\big( \cF_t^0 \cup \NN \big)$ be the
$\s$-algebra generated by $\cF_t^0$ and  the set $\NN $ of $\PP$-null sets.
This gives us a filtration
\begin{align}
\mathbb{F}:= \{ \cF_t: t\in\R_+\}.
\label{filtraF}
\end{align}

It is not difficult to see from \eqref{int0}, \eqref{int1}, and an approximation argument
that for $\phi\in L^2(Z, \cZ, \fm)$,

\noi
\begin{align}
\bE\big[ I_1(\phi) \vert \cF_t \big] = I_1( \phi \ind_{[0,t] \times\R\times\R_0} ).
\label{CE1}
\end{align}

 For conciseness of notations, we denote by $\fH$ the Hilbert space
 $L^2(Z, \cZ, \fm)$ and by $\fH^{\otimes n}$ the $n$-th tensor product
 of $\fH$ for any   integer $n\geq 1$.
We often write $\pmb{x_n} = (x_1,
\dots, x_n)$ for an element in $\R_+^n$, $\R^{n}$, or $\R_0^n$;
  $d\pmb{x_n}$ is an abbreviation for $dx_1 \cdots dx_n$,
  and
$\nu(d\pmb{z_n}) = \nu(dz_1)\cdots \nu(dz_n)$.
 From time to time, we
write $\xi = (r, y, z)$ to denote a point in $Z$
and $\fm(d\xi)  = dr  dy \nu(dz)$.
For a  function $h\in \fH^{\otimes n}$,
we often write
\begin{align}
h(\pmb{\xi_n}) = h(\pmb{t_n}, \pmb{x_n},\pmb{z_n})
= h(t_1, x_1,z_1, \dots, t_n, x_n, z_n),
\label{defh1}
\end{align}

\noi
whenever no confusion appears.

%
%

For $h$ as in \eqref{defh1}, we define its canonical symmetrization $\wt{h}$
by setting

\noi
\begin{align}
\begin{aligned}
\wt{h}(\pmb{\xi_n}) &=
\wt{h}(\pmb{t_n}, \pmb{x_n},\pmb{z_n})  \\
&= \frac{1}{n!} \sum_{\pi\in \mathfrak{S}_n} h( \xi_{\pi(1)}, \ldots, \xi_{\pi(n)}   ) \\
&= \frac{1}{n!} \sum_{\pi\in \mathfrak{S}_n}
h( t_{\pi(1)}, x_{\pi(1)}, z_{\pi(1)}, \ldots,  t_{\pi(n)}, x_{\pi(n)}, z_{\pi(n)} ),
\end{aligned}
\label{defh2}
\end{align}

\noi
where $\mathfrak{S}_n$ denotes the set of permutations over $\{1, ..., n\}$.
Let $\fH^{\odot n}$ denote the symmetric subspace of $\fH^{\otimes n}$.
That is, $\fH^{\odot n}$ consists of all elements $h\in \fH^{\otimes n}$
with $h = \wt{h}$.

To ease the notations, we introduce the cut-off of a function $h\in\fH^{\otimes n}$
in the temporal variable:
\begin{align}
h^t(\xi_1, ... ,\xi_n) = h(t_1, x_1, z_1 ... , t_n, x_n, z_n)\ind_{[0,t]^n}(t_1, ..., t_n).
\label{ht1}
\end{align}
With the above notation, we can rewrite \eqref{CE1} as
$\bE[ I_1(\phi)  | \cF_t  ] = I_1(\phi^t)$.

\subsection{Basic stochastic analysis on the Poisson space} \label{SUB22}

Let $N$ be the Poisson random measure on $(Z, \cZ, \fm)$ as in Subsection \ref{SUB21}.
A well-known theorem due to K. It\^o states that the $L^2(\PP)$ probability space generated
by the Poisson random measure $N$ can be written as a direct sum
of mutually orthogonal subspaces:
\begin{align}
L^2(\Om, \s\{N\}, \PP) = \bigoplus_{k\in\mathbb{N}_{\geq 0}} \bC_k ,
\label{CD1}
\end{align}
where $\bC_k$ is called the $k$-th Poisson Wiener chaos associated to $N$;
see \cite{Ito56, Last16, NN18}.

Let us begin with the construction of Poisson Wiener chaoses $\bC_k$, $k\in\N_{\geq 0}$.

\medskip

\noi
$\bul$ {\bf Poisson Wiener chaoses.}
%
The zero-th chaos $\bC_0\simeq \R$
is the set of (almost surely)
constant random variables in $L^2(\Om, \s\{N\}, \PP)$.
We have already defined the first Poisson Wiener chaos
\[
\bC_1:= \big\{ I_1(\phi): \phi\in \fH \big\} ,
\]

\noi
where $I_1(\phi)$ is defined as in \eqref{int1}-\eqref{int2},
and we recall that $\fH = L^2(Z, \cZ, \fm)$.

Now we define $\bC_k$ for $k\geq 2$.
First, we denote by $\cE^0_k$ the set  of simple functions of the form

\noi
\begin{align}
h(\xi_1, ... , \xi_k) = \sum_{i_1, ..., i_k =1}^m \be_{i_1,..., i_k}
\ind_{F_{i_1} \times \cdots \times F_{i_k}}(\xi_1, ... , \xi_k),
\label{int3a}
\end{align}

\noi
where $m\in\N_{\geq 1}$,  $F_1, ... , F_m\in\cZ$ are pairwise disjoint
sets of finite measures, and the coefficients $\be_{i_1, ... , i_p}$
vanish whenever any two of the indices $i_1, ... , i_k$ are equal.
It is known that because of the atom-less nature\footnote{Even if $\nu$ may not
be atom-less, the product measure $\fm = \Leb\times \nu$ on $(Z, \cZ)$ does
not have any atom.}
of the $\s$-finite measure space $(Z, \cZ, \fm)$, the set $\cE^0_k$
is dense in $\fH^{\otimes n} \equiv  L^2(Z^n)$; see, for example,
\cite[page 10]{Nua06}.
Since $\ind_{F_i}$ can be further
approximated by functions as in \eqref{int0},
we will then work with the  dense subset $\cE_k$ of $\fH^{\otimes n}$
that consists of simple functions $h\in\cE_k^0$ as in \eqref{int3a}
such that $F_i = A_i \times B_i \times C_i$
for some $(A_i, B_i, C_i)\in\cB(\R_+)\times\cB(\R)\times \cB(\R_0)$
with $\fm(F_i)<\infty$,
$i=1,2, ...,m$.
For such a simple function $h\in\cE_k$ as in \eqref{int3a},
we define

\noi
\begin{align}
I_k(h) = \sum_{i_1, ... , i_k=1}^m \be_{i_1, ... , i_k}
\prod_{j=1}^k \wh{N}(  A_{i_j} \times  B_{i_j} \times C_{i_j}     ),
\label{int3b}
\end{align}

 \noi
and the  following properties hold, as one can easily verify:
 \begin{itemize}

 \item[(i)] for $h\in \cE_k$, $I_k(h) = I_k( \wt{h})$,
 with $\wt{h}$ denoting the canonical symmetrization of $h$;
 see \eqref{defh2};

 \item[(ii)]

for $h_1\in \cE_k$ and $h_2\in\cE_\ell$ ($k, \ell\in\N_{\geq 1}$),

\noi
\begin{align}
\bE[  I_k(h_1)I_\ell(h_2) ] = k! \ind_{\{ k= \ell\}}
\langle \wt{h}_1, \wt{h}_2 \rangle_{\fH^{\otimes k}} ;
\label{int3c}
\end{align}

\item[(iii)]

for    $h\in \cE_k$ as in \eqref{int3a}, $I_k(h)$ as in \eqref{int3b}, and
for $t \in( 0,\infty)$, we have

\noi
\begin{align}
\begin{aligned}
\bE[  I_k(h) |   \cF_t  ]
&=   \sum_{i_1, ... , i_k=1}^m \be_{i_1, ... , i_k}
\prod_{j=1}^k \wh{N}\big(  ( A_{i_j} \cap [0,t] )  \times  B_{i_j} \times C_{i_j}     \big) \\
&= I_k (h^t  ) ,
\end{aligned}
\label{rel_3c}
\end{align}

\noi
where $h^t$ is introduced in \eqref{ht1}.

 \end{itemize}
 The relation \eqref{int3c} in property (ii) is known as the orthogonality
 and the $k=\ell$ case gives the modified isometry on $\cE_k$, and hence
 allows one to define for any $h\in\fH^{\otimes k}$,

 \noi
 \begin{align}
 I_k(h) := \lim_{n\to\infty} I_k(h_n)\,\,\, \text{in $L^2(\PP)$},
 \label{int4}
 \end{align}

 \noi
 where $h_n\in\cE_k$ converges to $h$ in $\fH^{\otimes k}$ as $n\to\infty$.
 This defines the $k$-th Poisson Wiener chaos associated to $N$:
 \begin{align}
 \bC_k := \{ I_k(h) : h\in \fH^{\otimes k} \} = \{ I_k(h) : h\in \fH^{\odot k} \}.
\notag 
 \end{align}
 We call  $ I_k(h) $ the $k$-th multiple integral of $h$ with respect to
 the compensated Poisson random measure $\wh{N}$.
 Note that the properties (i)-(iii) still hold for general functions $h, h_1\in \fH^{\otimes k}$
 and  $h_2\in\fH^{\otimes \ell}$:

 \noi
 \begin{align}
 \bE[ I_k(h_1)I_\ell(h_2) ] &=  k! \ind_{\{ k= \ell\}}
\langle \wt{h}_1, \wt{h}_2 \rangle_{\fH^{\otimes k}},
  \label{int3d}  \\
\bE[  I_k(h) |   \cF_t  ]
&= I_k (h^t  ) \,\,\, \text{with $h^t$ as in \eqref{ht1}}.  \label{int3db}
 \end{align}

 \noi
Then  the  chaos decomposition  \eqref{CD1}
 reads as follows: for any $F\in L^2(\Om, \s\{N\}, \PP)$,

 \noi
 \begin{align}
 F = \bE[F] + \sum_{n=1}^\infty I_n( f_n),
 \label{CD3}
 \end{align}

 \noi
 where $f_n\in \fH^{\odot n}$, $n\in\N_{\geq 1}$, are uniquely determined by $F$
 up to a null set with respect to $\fm$;
 see also \cite[Section 4]{Last16}.
 Using  \eqref{int3d},
we have
 \begin{align}
\Var(F) =  \sum_{n=1}^\infty n! \| f_n \|^2_{\fH^{\otimes n}} < \infty.
 \label{CD4}
 \end{align}

Unlike in the Gaussian setting, elements in a Poisson chaos may not
have all the moments and product of two random variables in Poisson chaoses
may not be in a sum of finitely many chaoses.

\medskip
 \noi
$\bul$ {\bf Product formula.}
For  $f \in \fH^{\otimes n}$ and $g \in \fH^{\otimes m}$ with $m,n\in\N_{\geq 1}$,
we define the {\it modified contractions} as follows:
\begin{itemize}

\item[(i)] $f\star^0_0 g = f\otimes g$ is the usual tensor product of $f$
and $g$;

\item[(ii)] for $1\leq k\leq n\wedge m$,  $f\star^0_k g$ is a real measurable function
on $Z^{m+n-k}$, given by

\noi
\begin{align}
\begin{aligned}
&(\ze_1, ... , \ze_k, \xi_1, ... , \xi_{n-k}, \th_1, ... , \th_{m-k}) \\
&\qquad \longmapsto
f(\ze_1, ... , \ze_k,  \xi_1, ... , \xi_{n-k})  g(\ze_1, ... , \ze_k, \th_1, ... , \th_{m-k}),
\end{aligned}
\label{rule1}
\end{align}

\noi
where $\ze_1, ... , \ze_k, \xi_1, ... , \xi_{n-k}, \th_1, ... , \th_{m-k}$ are points in
$Z = \R_+\times\R\times\R_0$;

\item[(iii)] for $1\leq \ell \leq k \leq n\wedge m$, $f\star^\ell_k g$ is a real measurable function
on $Z^{m+n-k-\ell}$, given by

\noi
\begin{align}
\begin{aligned}
&(\ze_1, ... , \ze_{k-\ell}, \xi_1, ... , \xi_{n-k}, \th_1, ... , \th_{m-k}) \\
&  \longmapsto
\int_{Z^\ell}f(\ga_1, ... , \ga_\ell, \ze_1, ... , \ze_{k-\ell},  \xi_1, ... , \xi_{n-k})
g(\ga_1, ... , \ga_\ell, \ze_1, ... , \ze_{k-\ell}, \th_1, ... , \th_{m-k})
\, \fm(d{\pmb\ga_{\pmb\ell}}) .
\end{aligned}
\label{rule2}
\end{align}

\end{itemize}
In other words,  $f\star^\ell_k g$ is obtained by first fixing $k$ arguments of both $f$
and $g$, and then integrating out $\ell$ variables out of these fixed arguments
according to the rules \eqref{rule1}-\eqref{rule2}.
When $k=\ell$ in \eqref{rule2}, $f\star^k_k g$ coincides with
the usual $k$-contraction $f\otimes_k g$
and by Cauchy-Schwarz's inequality, $f\star^k_k g\in\fH^{\otimes n+m-2k}$; see, for example, \cite[Appendix B]{NP12}. However, for $\ell < k$,
$f\star^\ell_k g$ may not belong to $\fH^{\otimes n+m-k-\ell}$.
For example, given $f\in\fH$,  $f\star^0_1 f \in \fH= L^2(Z, \cZ,  \fm)$ if and only if
$f\in L^4(Z, \cZ, \fm)$.

The next result gives a product formula for elements of Poisson Wiener chaoses.
It was first proved by Kabanov for $m=1$ (see \cite[Theorem 2]{Kab75})
and extended by Surgailis to a product of several elements of chaoses
(see \cite[Proposition 3.1]{Sur84}).
The form that we present below corresponds to    \cite[(9.22)]{NN18}
and  Proposition 5 in \cite[page 22]{Last16}; see also \cite[Proposition 2.1]{DP18b}.

\begin{proposition}[Product Formula]
\label{prop:prod}
Let $f \in \fH^{\odot n}$ and $g \in \fH^{\odot m}$ be such that
$f \star_{k}^{\ell} g\in \fH^{\otimes (m+n-k-\ell)}$
for any $k=1,\ldots, n\wedge m$ and $\ell=0,1,\ldots,k$.
Then,
\[
I_n(f) I_m(g)
= \sum_{k=0}^{n\wedge m}k! \binom{n}{k}\binom{m}{k}\sum_{\ell=0}^{k}\binom{k}{\ell}I_{n+m-k-\ell}(f \star_{k}^{\ell} g ).
\]
\end{proposition}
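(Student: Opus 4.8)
The plan is to establish the product formula for two multiple Poisson integrals by reducing the general case to simple functions in $\cE_n$ and $\cE_m$, where the identity can be verified by direct combinatorial bookkeeping, and then passing to the limit. First I would take $f\in\cE_n^0$ and $g\in\cE_m^0$ of the elementary form \eqref{int3a}, built from pairwise disjoint sets $F_1,\dots,F_p\in\cZ$ of finite measure. For such functions, $I_n(f)$ and $I_m(g)$ are explicit polynomials in the independent centered random variables $\wh{N}(F_i)$, as in \eqref{int3b}. The product $I_n(f)I_m(g)$ is then a finite sum of products $\prod_j \wh{N}(F_{i_j})\prod_{j'}\wh{N}(F_{i'_{j'}})$, and the combinatorial content is to regroup these according to how many indices from the $f$-block coincide with indices from the $g$-block. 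Each coincidence between two distinct blocks contributes a factor $\wh{N}(F)^2$; using the elementary moment identity $\bE[\wh{N}(F)^2]=\fm(F)$ together with the relation $\wh{N}(F)^2=\wh{N}(F)+\fm(F)$ when restricted to the diagonal behavior of the Poisson law, one sees that each matched pair either survives as a single factor $\wh{N}(F)$ (the $\ell<k$ case, a "partial" contraction carrying the extra factor) or is integrated out against $\fm$ (the $\ell=k$ case, giving the genuine $\fm$-integration). Tracking the number of coincident pairs by $k$ and the number of pairs that are fully contracted by $\ell$ produces exactly the two nested sums and the multinomial coefficients $k!\binom{n}{k}\binom{m}{k}\binom{k}{\ell}$.

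The second step is to identify the resulting expression with $\sum_{k,\ell}c_{k,\ell}\,I_{n+m-k-\ell}(f\star^\ell_k g)$. Here I would observe that the modified contraction $f\star^\ell_k g$ defined in \eqref{rule1}--\eqref{rule2} precisely encodes the operation of fixing $k$ shared variables and integrating $\ell$ of them out, which is the algebraic shadow of matching $k$ index-pairs and contracting $\ell$ of them via $\fm$. On the elementary functions, because the sets $F_i$ are pairwise disjoint, the star-contraction is again supported on products of indicators and the correspondence between the polynomial expansion and the integral representation can be checked termwise. The counting of orderings — choosing which $k$ of the $f$-slots to pair with which $k$ of the $g$-slots, and which $\ell$ of those to integrate — is exactly what the binomial factors count, so the coefficients match.

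Finally I would remove the simplifying assumptions by an approximation argument. Given $f\in\fH^{\odot n}$ and $g\in\fH^{\odot m}$ satisfying the hypothesis that every $f\star^\ell_k g$ lies in $\fH^{\otimes(m+n-k-\ell)}$, I would choose sequences $f_N\in\cE_n$, $g_N\in\cE_m$ converging to $f,g$ in the respective $\fH^{\otimes\bullet}$ norms. The isometry \eqref{int3d} gives convergence of $I_n(f_N)\to I_n(f)$ and $I_m(g_N)\to I_m(g)$ in $L^2(\PP)$; by symmetry of $f,g$ the symmetrizations cause no trouble via property (i) of \eqref{int3b}. The main obstacle, and the delicate point of this last step, is the convergence of the right-hand side: one must verify that $f_N\star^\ell_k g_N\to f\star^\ell_k g$ in $\fH^{\otimes(m+n-k-\ell)}$ for the partial contractions with $\ell<k$. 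This is where the integrability hypothesis is essential, since, as the excerpt already warns, $f\star^\ell_k g$ need not be square-integrable without it, and an unbounded $L^4$-type norm would break the limiting argument. I would control these contractions by Cauchy--Schwarz estimates relating $\|f_N\star^\ell_k g_N-f\star^\ell_k g\|_{\fH^{\otimes(m+n-k-\ell)}}$ to the convergence of $f_N,g_N$ and to the assumed membership $f\star^\ell_k g\in\fH^{\otimes(m+n-k-\ell)}$; the fully contracted terms ($\ell=k$) are handled routinely since $f\star^k_k g=f\otimes_k g$ is automatically in the right space. Once all terms on both sides converge in $L^2(\PP)$, the identity passes to the limit and the proposition follows. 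Since this is a known result due to Kabanov and Surgailis, I would cite \cite{Kab75,Sur84,Last16,DP18b,NN18} and present only the reduction and the key approximation estimate in detail.
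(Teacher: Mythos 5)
First, a point of comparison: the paper does not prove Proposition \ref{prop:prod} at all --- it is quoted from the literature (Kabanov \cite{Kab75}, Surgailis \cite{Sur84}, in the form of \cite{NN18, Last16, DP18b}). So your closing decision to cite and only sketch the argument is consistent with the paper's treatment. However, the sketch you give contains two genuine gaps that would have to be repaired before it could serve as a proof.

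The first gap is in the combinatorial core. The identity you invoke, $\wh{N}(F)^2 = \wh{N}(F) + \fm(F)$, is false for a set $F$ of finite positive measure; the correct relation is the Charlier recursion $\wh{N}(F)^2 = I_2(\ind_F \otimes \ind_F) + \wh{N}(F) + \fm(F)$, which is precisely the $n=m=1$ case of the formula being proved. Each cross-block coincidence therefore produces \emph{three} contributions (unmatched, merged, integrated), not the two you list, and the third one has a kernel $\ind_{F\times F}$ that does not vanish on the diagonal. Such kernels are outside $\cE_2^0$, so $I_2(\ind_F\otimes\ind_F)$ is not given by the explicit polynomial formula \eqref{int3b}; identifying it with $\wh{N}(F)^2 - \wh{N}(F) - \fm(F)$ already requires a partition-refinement argument (split $F$ into small pieces $A_j$, use that $N$ is simple so that eventually $N(A_j)\in\{0,1\}$ almost surely, and pass to the $L^2(\PP)$ limit using non-atomicity of $\fm$). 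Consequently the ``termwise'' verification on elementary functions is not available as stated: even writing down what the right-hand side means for elementary $f,g$ on a common partition needs this refinement step or an induction on $n+m$.

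The second gap is in the limiting argument, and it is fatal as described. The partial contractions $\star_k^\ell$ with $\ell<k$ are \emph{not} continuous bilinear maps in the $\fH^{\otimes\bullet}$ norms --- the paper itself notes that $f\star_1^0 f = f^2$ belongs to $\fH$ if and only if $f\in L^4(Z,\cZ,\fm)$. Hence there is no Cauchy--Schwarz estimate bounding $\| f_N\star_k^\ell g_N - f\star_k^\ell g\|_{\fH^{\otimes(n+m-k-\ell)}}$ in terms of $\|f_N-f\|_{\fH^{\otimes n}}$, $\|g_N-g\|_{\fH^{\otimes m}}$, and the assumed square-integrability of the limit contraction; from $L^2$-convergence of the approximants one cannot conclude that $f_N\star_k^\ell g_N$ converges, or even that it lies in the right space. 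The proofs in the sources you cite avoid this issue entirely: Last \cite{Last16, LP18} works with pathwise factorial-measure representations of $I_n(f)$, for which the product formula is a deterministic combinatorial identity plus integrability, while Döbler--Peccati-type statements \cite{DP18b} impose the hypothesis on contractions of $|f|$ and $|g|$, so that one can take approximants \emph{dominated} by $|f|,|g|$ (truncations $f\ind_{\{|f|\le N\}}\ind_{K_N}$ rather than arbitrary simple functions) and conclude by dominated convergence. If you wish to keep your approximation scheme, you would need both of these modifications: dominated approximants, and the stronger hypothesis $|f|\star_k^\ell |g| \in \fH^{\otimes(n+m-k-\ell)}$, which is not what the proposition as stated assumes.
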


When $f\star^1_k g = 0$,  we deduce from the definition of modified contractions
that   $f\star^\ell_k g = 0$ for all $\ell = 2, ... , k$.
In this case, we have a simpler
form of the product formula.

\begin{proposition} \label{prop:prod2}

Let $f \in \fH^{\otimes n}$ and $g \in \fH^{\otimes m}$ be not necessarily symmetric
such that $\wt{f} \star_{k}^{\ell} \wt{g}\in \fH^{\otimes(n+m-k-\ell)}$
for any $k=1, \ldots, n\wedge m$ and $\ell=1,\ldots,k$.
Suppose  $\wt{f}\star_{k}^{1} \wt{g}=0$ for any $k=1,\ldots,n\wedge m$.
Then,
\[
I_n(f)I_m(g)=I_{n+m}(f \otimes g)+\sum_{k=1}^{n\wedge m}k! \binom{n}{k}\binom{m}{k}I_{n+m-k}(\widetilde{f} \star_{k}^{0} \widetilde{g}).
\]
\end{proposition}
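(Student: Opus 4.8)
The plan is to derive this simplified formula directly from the general product formula (Proposition~\ref{prop:prod}) after reducing everything to symmetric kernels. The first step is to observe that a multiple integral only sees the symmetrization of its argument: property (i) of the multiple integrals extends from $\cE_k$ to all of $\fH^{\otimes k}$ by the isometry \eqref{int3d}, since $\widetilde{h-\wt h}=\wt h-\wt h=0$ forces $\|I_k(h)-I_k(\wt h)\|_2^2=k!\,\|\widetilde{h-\wt h}\|_{\fH^{\otimes k}}^2=0$. Hence $I_n(f)=I_n(\wt f)$ and $I_m(g)=I_m(\wt g)$, and it suffices to evaluate $I_n(\wt f)\,I_m(\wt g)$, where both kernels are now symmetric and Proposition~\ref{prop:prod} is available.

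Next I would feed the symmetric pair $(\wt f,\wt g)$ into Proposition~\ref{prop:prod} and prune the resulting double sum $\sum_{k=0}^{n\wedge m}\sum_{\ell=0}^{k}$. The hypothesis $\wt f\star_k^1\wt g=0$ for every $k=1,\dots,n\wedge m$, together with the observation recorded just before the statement (that $\wt f\star_k^1\wt g=0$ forces $\wt f\star_k^\ell\wt g=0$ for all $\ell=2,\dots,k$), annihilates every term with $k\ge 1$ and $\ell\ge 1$. What survives is the $k=0$ term together with the $\ell=0$ terms for $k\ge 1$. The latter carry coefficient $k!\binom{n}{k}\binom{m}{k}\binom{k}{0}=k!\binom{n}{k}\binom{m}{k}$ and contribute $I_{n+m-k}(\wt f\star_k^0\wt g)$, which is exactly the asserted sum. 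For the $k=0$ term I would use that the full $(n+m)$-fold symmetrization of a tensor product is insensitive to pre-symmetrizing the factors, i.e.\ $\widetilde{\wt f\otimes\wt g}=\widetilde{f\otimes g}$; combined with property (i) this gives $I_{n+m}(\wt f\otimes\wt g)=I_{n+m}(f\otimes g)$, matching the leading term. Collecting the pieces yields the claimed identity.

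The step requiring the most care is verifying that Proposition~\ref{prop:prod} is genuinely applicable to $(\wt f,\wt g)$, i.e.\ that all the contractions it invokes lie in the correct $\fH^{\otimes(\cdot)}$. The partial contractions with $\ell\ge 1$ are covered by the standing hypothesis (indeed they are all zero here), and the full contractions $\star_k^k=\otimes_k$ are automatically square-integrable by Cauchy--Schwarz. The delicate point is the $\ell=0$ contraction $\wt f\star_k^0\wt g\in\fH^{\otimes(n+m-k)}$: this is precisely what is needed for the right-hand side of the formula to be well-defined, and it is not implied by the vanishing hypothesis alone (for $n=m=1$ the condition $\langle f,g\rangle_{\fH}=0$ does not force the pointwise product $fg$ to lie in $L^2(Z,\cZ,\fm)$). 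I would therefore record this $\ell=0$ integrability as part of the implicit hypotheses guaranteeing the right-hand side makes sense, or verify it in each application; once it is in hand, the combinatorial reduction above completes the argument.
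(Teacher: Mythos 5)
Your proof is correct and follows essentially the same route as the paper's: replace $(f,g)$ by $(\wt f,\wt g)$, apply Proposition \ref{prop:prod}, kill every term with $k\ge 1$ and $\ell\ge 1$ using the vanishing hypothesis, and identify $I_{n+m}(\wt f\otimes \wt g)=I_{n+m}(f\otimes g)$ via equality of the symmetrizations. Your closing caveat---that square-integrability of the $\ell=0$ contractions $\wt f\star_k^0\wt g$ is needed for the right-hand side to be well-defined and is not implied by the stated hypotheses---is a fair reading of the statement (in the paper's actual application, \eqref{dec3}, these contractions vanish identically, so the issue never arises).
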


\begin{proof} As $(I_n(f), I_m(g) ) =  (I_n(\wt{f}), I_m(\wt{g}) )$,
the desired product formula follows from Proposition \ref{prop:prod},
the fact that $\wt{f} \star^\ell_k \wt{g} = 0$ for all $1\leq \ell \leq k$,
and by  noting that
$\wt{f} \otimes \wt{g}$ and
$f \otimes g$ have the same symmetrization.
\qedhere

\end{proof}

\medskip

\noi
$\bul$ {\bf Malliavin derivatives.} Let $\dom(D)$ denote  the set of random variables
$F$ as in \eqref{CD3} with the  symmetric kernels $\{f_n\}_n$ satisfying
\begin{align}
\sum_{n=1}^\infty n! n \| f_n\|^2_{\fH^{\otimes n}} < \infty.
\notag 
\end{align}
For such a random variable $F\in\dom(D)$, we define the Malliavin derivative
$DF$ of $F$ to be a $\fH$-valued random variable, given by
\begin{align}
D_\xi F = \sum_{n=1}^\infty n I_{n-1}(f_n(\xi, \bul) ), \,\, \, \xi\in Z,
\label{CD5b}
\end{align}

\noi
 where for fixed $\xi\in Z$, $f_n(\xi, \bul)\in\fH^{\odot (n-1)}$.
 By using orthogonality relation \eqref{int3c}, we have
 \[
 \bE\big[ \| DF \|_{\fH}^2  \big] = \sum_{n=1}^\infty n! n \| f_n\|^2_{\fH^{\otimes n}} <\infty.
 \]
Comparing this equality with \eqref{CD4} yields
the following Poincar\'e inequality:
\begin{align}
\Var(F) \leq \bE\big[ \| DF \|_{\fH}^2  \big]
\label{Poi1}
\end{align}
for any $F\in\dom(D)$,
with equality when and only when $F\in\bC_0 \oplus \bC_1$.

Similarly, we can define the second Malliavin derivative $D^2 F$ as follows:
for $F$ as in \eqref{CD3},

\noi
\begin{align}
\begin{aligned}
D^2_{\ze,\xi} F &:= D_\xi D_\ze F
= \sum_{n=2}^\infty n(n-1) I_{n-2}( f_{n-2}(\ze, \xi, \bul) ),
\end{aligned}
\label{CD6}
\end{align}
provided the above series in \eqref{CD6} converges in $L^2(\PP)$.
That is, the domain of $D^2$ is given by
\[
\dom(D^2) = \Big\{ \text{$F$ as in \eqref{CD3}} :
\sum_{n=2}^\infty n^2 n! \| f_n \|^2_{\fH^{\otimes n}} < \infty \Big\} .
\]

\medskip

\noi
$\bul$ {\bf Kabanov-Skorohod integral $\dl$.} This is an adjoint operator of $D$,
characterized by the following duality relation:

\noi
\begin{align}
\bE[ \langle DF, V \rangle_\fH  ] = \bE[ F \dl(V) ] 
\label{dualR}
\end{align}

\noi
for any $F\in\dom(D)$.
In view of Riesz's representation theorem, we let $\dom(\dl)$ be the set of
$V\in L^2(\Om ; \fH)$ such that there is some finite constant $C=C(V) > 0$ such that
\[
\big| \bE[ \langle DF, V \rangle_\fH  ]  \big| \leq C \| F \|_2
\]
for any $F\in\dom(D)$.
Then, the duality relation \eqref{dualR} holds for any $(F, V)\in \dom(D)\times \dom(\dl)$.

Suppose $V\in L^2(\Om; \fH)$. Then, for $\fm$-almost every $\xi\in Z$,
$V(\xi)\in L^2(\PP)$ by Fubini's theorem. Then, by chaos decomposition,
we can write
\begin{align}
V(\xi) = \bE[ V(\xi) ] + \sum_{n=1}^\infty I_n\big(h_n(\xi, \bul) \big) ,
\label{CD7}
\end{align}

\noi
where $h_n(\xi, \bul)\in \fH^{\odot n}$ may not be symmetric
in all its $(n+1)$ arguments,
and we write $h_0(\xi) = \bE[ V(\xi) ]$.
Note that $V\in L^2(\Om; \fH)$ forces $h_n\in \fH^{\otimes (n+1)}$ for every $n$.
Assume first that there are finitely many chaoses in the above series \eqref{CD7}:

\noi
\begin{align}
\text{$h_n(\xi,\bul) = 0$ for $n\geq M$,}
\label{CD7b}
\end{align}

\noi
where $M\geq 1$ is any given integer that does not depend on $\xi$ nor $n$.
Then, for $F\in\dom(D)$ having the form \eqref{CD3},
we  deduce from \eqref{CD5b}, \eqref{CD7}, Fubini's theorem
and orthogonality relation \eqref{int3d} that

\noi
\begin{align}
\begin{aligned}
\bE \big[ \langle DF, V \rangle_\fH \big]
&= \bE  \int_Z   \Big( \sum_{n=1}^\infty n I_{n-1}(f_n(\xi, \bul ) \Big)
\Big( \sum_{m=0}^M I_m\big( h_m(\xi, \bul) \big) \Big) \fm(d \xi) \\
&=  \int_Z \sum_{n=1}^M  n! \langle f_n(\xi, \bul ), h_{n-1}(\xi, \bul)
\rangle_{\fH^{\otimes (n-1)}} \fm(d\xi) \\
&= \sum_{n=1}^M  n! \langle f_n, h_{n-1} \rangle_{\fH^{\otimes n}}
= \sum_{n=1}^M n!  \langle f_n,  \wt{h}_{n-1} \rangle_{\fH^{\otimes n}} ,
\end{aligned}
\label{CD8}
\end{align}

\noi
which, together with Cauchy-Schwarz's inequality,
implies that

\noi
\begin{align}
\begin{aligned}
\big| \bE \big[ \langle DF, V \rangle_\fH \big] \big|
&\leq  \bigg(\sum_{n=1}^M n! \| f_n\|^2_{\fH^{\otimes n}}  \bigg)^{\frac12}
\bigg(\sum_{n=1}^M n! \| \wt{h}_{n-1}\|^2_{\fH^{\otimes n}}  \bigg)^{\frac12} \\
&\leq  \| F\|_2
\bigg(\sum_{n=1}^M n! \| \wt{h}_{n-1}\|^2_{\fH^{\otimes n}}  \bigg)^{\frac12}.
\end{aligned}
\label{CD9}
\end{align}
In particular, we proved that for $V\in L^2(\Om; \fH)$ satisfying \eqref{CD7b},
$V$ belongs to $\dom(\dl)$;\footnote{This
also tells us that $\dom(\dl)$ is dense in $L^2(\Om; \fH)$. \label{ft10}}
and in this case, we deduce again from \eqref{CD8}
and \eqref{int3d} that

\noi
\begin{align}
\begin{aligned}
\bE \big[ \langle DF, V \rangle_\fH \big]
& =  \sum_{n=1}^M  \bE\big[  I_n( f_n) I_n(  \wt{h}_{n-1} )  \big] \\
&= \bE \bigg[ F \sum_{n=1}^M I_n(\wt{h}_{n-1}   ) \bigg]
\end{aligned}
\label{CD9b}
\end{align}
for any $F\in\dom(D)$, and thus,
\begin{align}
\dl(V) = \sum_{n=1}^\infty I_n( h_{n-1}   ).
\label{CD9c}
\end{align}

One can easily generalize this particular case of \eqref{CD7b}
to the following result, whose proof is  sketched.

\begin{lemma}\label{lem:dl}
Suppose $V\in L^2(\Om; \fH)$ has the expression \eqref{CD7}
with
\begin{align} \label{condV}
\sum_{n=1}^\infty n! \| \wt{h}_{n-1}\|^2_{\fH^{\otimes n}}  <\infty.
\end{align}
Then, $V\in\dom(\dl)$ and $\dl(V)$ is given as in \eqref{CD9c}.
\end{lemma}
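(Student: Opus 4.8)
The plan is to reduce the general statement to the finite-chaos case already settled in \eqref{CD7b}--\eqref{CD9c}, by truncating the chaos expansion of $V$ and passing to the limit. For each integer $M\geq 1$, define the truncation $V_M\in L^2(\Om; \fH)$ by
\[
V_M(\xi) = h_0(\xi) + \sum_{n=1}^{M-1} I_n\big( h_n(\xi, \bul) \big),
\]
so that $V_M$ satisfies \eqref{CD7b} with this value of $M$. By the computation carried out in \eqref{CD8}--\eqref{CD9c}, we already know that $V_M\in\dom(\dl)$ and
\[
\dl(V_M) = \sum_{n=1}^{M} I_n( h_{n-1} ).
\]
It then suffices to identify the limits of $V_M$ in $L^2(\Om; \fH)$ and of $\dl(V_M)$ in $L^2(\PP)$, and to match them through the duality relation \eqref{dualR}.

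First I would check that $V_M\to V$ in $L^2(\Om;\fH)$. By the chaos expansion \eqref{CD7} and the orthogonality relation \eqref{int3d}, applied fibrewise in $\xi$ and integrated in $\fm$, the assumption $V\in L^2(\Om;\fH)$ reads
\[
\bE\big[ \|V\|_\fH^2 \big] = \int_Z \sum_{n=0}^\infty n!\, \| h_n(\xi, \bul) \|_{\fH^{\otimes n}}^2\, \fm(d\xi) < \infty ,
\]
whence the tail
\[
\bE\big[ \|V - V_M\|_\fH^2 \big] = \int_Z \sum_{n\geq M} n!\, \| h_n(\xi, \bul) \|_{\fH^{\otimes n}}^2\, \fm(d\xi)
\]
tends to $0$ as $M\to\infty$ by dominated convergence (the integrand decreases to $0$ pointwise and is dominated by the full sum).

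Next I would verify that $\dl(V_M)$ converges in $L^2(\PP)$. Since $I_n(h_{n-1}) = I_n(\wt{h}_{n-1})$ by property (i) of the multiple integrals, orthogonality \eqref{int3d} gives, for $M' > M$,
\[
\Big\| \sum_{n=M+1}^{M'} I_n( h_{n-1} ) \Big\|_2^2 = \sum_{n=M+1}^{M'} n!\, \| \wt{h}_{n-1} \|_{\fH^{\otimes n}}^2 ,
\]
which is the tail of the convergent series in \eqref{condV}; hence $\{\dl(V_M)\}_M$ is Cauchy in $L^2(\PP)$ and converges to $G := \sum_{n=1}^\infty I_n( h_{n-1} )$, a bona fide element of $L^2(\Om, \s\{N\}, \PP)$ with $\|G\|_2^2 = \sum_{n\geq 1} n!\, \| \wt{h}_{n-1} \|_{\fH^{\otimes n}}^2 < \infty$.

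Finally I would pass to the limit in the duality relation $\bE[\langle DF, V_M\rangle_\fH] = \bE[ F\, \dl(V_M) ]$, valid for every $F\in\dom(D)$. The left-hand side converges to $\bE[\langle DF, V\rangle_\fH]$ because $DF\in L^2(\Om;\fH)$ and $V_M\to V$ in $L^2(\Om;\fH)$ (Cauchy--Schwarz), while the right-hand side converges to $\bE[F\,G]$ because $\dl(V_M)\to G$ in $L^2(\PP)$ and $F\in L^2(\PP)$. Thus $\bE[\langle DF, V\rangle_\fH] = \bE[F\,G]$ for all $F\in\dom(D)$, and in particular $|\bE[\langle DF, V\rangle_\fH]|\leq \|G\|_2\,\|F\|_2$; by the definition of $\dom(\dl)$ this shows $V\in\dom(\dl)$, and the displayed identity forces $\dl(V)=G=\sum_{n=1}^\infty I_n(h_{n-1})$, as claimed. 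I expect the only genuinely delicate point to be the simultaneous control of the two limits---uniformly in $F$ for the membership $V\in\dom(\dl)$ and exactly for the identification of $\dl(V)$---which is precisely what the hypotheses \eqref{condV} and $V\in L^2(\Om;\fH)$ guarantee; everything else reduces to orthogonality and Cauchy--Schwarz.
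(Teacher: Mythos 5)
Your proof is correct and takes essentially the same route as the paper's: truncate the chaos expansion of $V$, invoke the finite-chaos case \eqref{CD7b}--\eqref{CD9c} to get $\dl(V_M)$, and pass to the limit in the duality relation \eqref{dualR} using the $L^2(\Om;\fH)$-convergence $V_M\to V$ and the $L^2(\PP)$-convergence of $\dl(V_M)$ guaranteed by \eqref{condV}, concluding $V\in\dom(\dl)$ and $\dl(V)=\sum_{n\geq 1} I_n(h_{n-1})$ via Cauchy--Schwarz and density of $\dom(D)$. Your handling of the truncation index and the explicit fibrewise-orthogonality check that $V_M \to V$ are if anything slightly more careful than the paper's, but the argument is the same.
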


\begin{proof}
Let $V\in L^2(\Omega; \fH)$ be given as in \eqref{CD7} 
subject to the condition \eqref{condV},
and we define 
\[
V_M(\xi) = \bE [ V(\xi)  ] + \sum_{n=1}^M I_n( h_n(\xi, \bul) )
\quad\text{for any integer $M \geq 1$.}
\]

\noi
It is immediate that $V_M$ converges to $V$ in $L^2(\Omega; \fH)$
as $M\to\infty$, and thus,

\noi
\begin{align}\label{CD10a}
\bE\big[ \langle DF, V \rangle_\fH \big] = \lim_{M\to\infty}\bE\big[ \langle DF, V_M \rangle_\fH \big],
\quad
\forall F\in\dom(D).
\end{align}

\noi
In view of the above discussions \eqref{CD7b}--\eqref{CD9b}, 
we have $V_M\in\dom(\dl)$ and 
\[
\dl(V_M) = \sum_{n=1}^M I_n(h_{n-1}),
\]
which converges in $L^2(\Omega)$ to $ \sum_{n=1}^\infty I_n(h_{n-1})$ by \eqref{condV}.
Moreover, we deduce from \eqref{CD10a}, \eqref{CD9b},  Cauchy-Schwarz,
and the condition \eqref{condV} with the orthogonality relation \eqref{int3d}
 that 

\noi
\begin{align*}
\big|  \bE\big[ \langle DF, V \rangle_\fH \big]  \big|
=  \lim_{M\to\infty} \big|     \bE [ F \dl(V_M)  ]\big|  
\leq \| F\|_2  \bigg( \sum_{n=1}^\infty n! \| \wt{h}_{n-1}\|^2_{\fH^{\otimes n}}   \bigg)^{\frac12}.
\end{align*}
This implies that $V\in\dom(\dl)$ and $\dl(V)$, as
 the $L^2(\Omega)$-limit of $\dl(V_M)$, is given by    \eqref{CD9c}.
\qedhere

\end{proof}

As a consequence, for a deterministic function $\phi\in \fH$,
we have
\begin{align}\label{EXT1}
 \dl(\phi) = I_1(\phi).
\end{align}

 The following lemma generalizes \eqref{CE1},
\eqref{rel_3c}, and \eqref{int3db}; it also shows that
the It\^o integral is a particular case
of the Kabanov-Skorohod integral and provides
a Clark-Ocone formula;
 see  Theorems 10.2.7 and  10.4.1  in \cite{NN18}
 for the results for the classical L\'evy processes.

\begin{lemma} \label{lem:CE2}

{\rm (i)}  Suppose that the assumptions in  Lemma \ref{lem:dl} hold and fix $t\in(0, \infty)$.
Recall also the notation \eqref{ht1}.
Then, $V^t \in \dom(\dl)$
and
\[
\bE\big[ \dl(V) | \cF_t \big] = \dl(V^t) = \sum_{n=1}^\infty I_n( h^t_{n-1}   ).
\]

\smallskip
\noi
{\rm (ii)} Suppose $F\in\dom(D)$ is $\cF_t$-measurable
for some fixed $t \in(0,\infty)$.
Then,  $D_{s, y, z} F = 0$ almost surely
for  $\fm$-almost every $(s, y, z)\in (t,\infty)\times\R\times\R_0$.

\smallskip
\noi
{\rm (iii)}  Suppose $F\in\dom(D)$ is $\cF_t$-measurable
for some fixed $t \in(0,\infty)$.
Then, the following Clark-Ocone formula holds:
\begin{align}
F = \bE[F] + \dl(V),
\notag
\end{align}

\noi
where $  (r,y,z)\in Z \mapsto  V(r, y, z) := \bE\big[ D_{r, y, z} F | \cF_r \big]$ belongs to $\dom(\dl)$.

\smallskip
\noi
{\rm (iv)}  Suppose $V\in L^2(\Omega; \fH)$ is $\mathbb{F}$-predictable,
 with $\mathbb{F}$ as in \eqref{filtraF}.
 Then,
$V\in\dom(\dl)$ and
$\dl(V)$ coincides with the  It\^o integral of $V$ against the compensated Poisson random
measure $\wh{N}$:
\begin{align}\label{EXT0}
\dl(V) = \int_0^\infty \int_{\R} \int_{\R_0} V(t, x, z) \wh{N}(dt, dx, dz).
\end{align}

\end{lemma}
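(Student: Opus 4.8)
The plan is to establish the four parts in order, relying throughout on Lemma~\ref{lem:dl}, the conditioning identity \eqref{int3db}, the definition \eqref{CD5b} of $D$, and the product formula (Proposition~\ref{prop:prod2}). For part (i), writing $V$ as in \eqref{CD7}, Lemma~\ref{lem:dl} gives $\dl(V)=\sum_{n\ge1}I_n(h_{n-1})$; conditioning term by term via \eqref{int3db} (and using the $L^2(\PP)$-continuity of $\bE[\,\cdot\mid\cF_t]$ to interchange with the sum) yields $\bE[\dl(V)\mid\cF_t]=\sum_{n\ge1}I_n(h_{n-1}^t)$. I would then identify the temporal cut-off as $V^t(r,y,z)=\ind_{[0,t]}(r)\,\bE[V(r,y,z)\mid\cF_t]$, whose level-$(n-1)$ chaos kernel is exactly $h_{n-1}^t$; since restricting temporal variables to $[0,t]$ can only decrease the $\fH^{\otimes n}$-norms, condition \eqref{condV} for $V^t$ follows from that for $V$, so Lemma~\ref{lem:dl} applies to $V^t$ and gives $\dl(V^t)=\sum_{n\ge1}I_n(h_{n-1}^t)$, matching the conditional expectation. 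For part (ii), the $\cF_t$-measurability of $F$ with \eqref{int3db} forces $\bE[F\mid\cF_t]=\bE[F]+\sum_n I_n(f_n^t)=F$, so by uniqueness of the chaos expansion \eqref{CD3} each $f_n$ agrees (up to an $\fm$-null set) with $f_n^t$, i.e. $f_n$ vanishes whenever one of its temporal coordinates exceeds $t$; plugging this into \eqref{CD5b} gives $D_{s,y,z}F=0$ for $\fm$-a.e. $(s,y,z)$ with $s>t$.

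For part (iii), by \eqref{CD5b} we have $D_{r,y,z}F=\sum_{n\ge1}nI_{n-1}(f_n((r,y,z),\bul))$, and conditioning via \eqref{int3db} produces the chaos kernels of $V(r,y,z)=\bE[D_{r,y,z}F\mid\cF_r]$, while conditional Jensen together with $F\in\dom(D)$ shows $V\in L^2(\Om;\fH)$. Lemma~\ref{lem:dl} then yields $\dl(V)=\sum_{n\ge1}I_n(h_{n-1})$ with $h_{n-1}(\xi_1,\dots,\xi_n)=n\,f_n(\xi_1,\dots,\xi_n)\ind_{\{r_2\vee\cdots\vee r_n\le r_1\}}$, where $r_i$ denotes the temporal coordinate of $\xi_i$. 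The crux is the symmetrization: using that $f_n$ is symmetric and that, Lebesgue-a.e., exactly one temporal coordinate is the largest, I would compute $\wt h_{n-1}=f_n\sum_{j=1}^n\ind_{\{r_j=\max_i r_i\}}=f_n$ a.e.; hence $\dl(V)=\sum_n I_n(f_n)=F-\bE[F]$, and the same identity shows the left side of \eqref{condV} equals $\Var(F)<\infty$, so $V\in\dom(\dl)$.

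For part (iv), by the footnote restriction I may take the elementary process \eqref{simpleX} with $Y$ Malliavin differentiable, and first treat $V=Y\ind_E$ with $E=(a,b]\times A\times\Gamma$. Expanding $Y=\bE[Y]+\sum_m I_m(g_m)$, where each $g_m=g_m^a$ is supported on $[0,a]$ in time, the level-$(n-1)$ kernel of $V$ is $\ind_E\otimes g_{n-1}$; the condition $Y\in\dom(D)$ ensures \eqref{condV}, so Lemma~\ref{lem:dl} gives $V\in\dom(\dl)$ and $\dl(V)=\bE[Y]I_1(\ind_E)+\sum_{n\ge2}I_n(\ind_E\otimes g_{n-1})$. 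On the other hand the Itô integral of $V$ equals $Y\wh{N}(E)=Y\,I_1(\ind_E)$, and I would expand this product by Proposition~\ref{prop:prod2}: since $g_m$ and $\ind_E$ have disjoint temporal supports ($[0,a]$ versus $(a,b]$), the contractions $\wt g_m\star^1_1\ind_E$ and $\wt g_m\star^0_1\ind_E$ all vanish, so $Y\,I_1(\ind_E)=\bE[Y]I_1(\ind_E)+\sum_{n\ge2}I_n(g_{n-1}\otimes\ind_E)$, which coincides with $\dl(V)$. To pass to a general predictable $V\in L^2(\Om;\fH)$, I would approximate it in $L^2(\Om;\fH)$ by elementary processes $V_k$; the Itô isometry makes $\int V_k\,d\wh{N}$ converge in $L^2(\Om)$, while $\dl(V_k)=\int V_k\,d\wh{N}$ and the closedness of $\dl$ (as the adjoint of the densely defined $D$) give $V\in\dom(\dl)$ and $\dl(V)=\int V\,d\wh{N}$.

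The main obstacle I expect lies in part (iii), namely the symmetrization identity $\wt h_{n-1}=f_n$, which hinges on the a.e. uniqueness of the maximal temporal coordinate, and in the approximation step of part (iv), where one must combine the vanishing of the contractions (from disjoint temporal supports) with the closedness of $\dl$ and the Itô isometry to justify the passage from elementary to general predictable integrands.
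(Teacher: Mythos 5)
Your proposal is correct, and for parts (i) and (iii) it follows essentially the same route as the paper: for (i), apply Lemma \ref{lem:dl} (i.e.\ the computations \eqref{CD8}--\eqref{CD9b}) to the cut-off process and condition chaos-by-chaos via \eqref{int3db}; for (iii), compute the kernels of $V(r,y,z)=\bE[D_{r,y,z}F\mid\cF_r]$ and show that their symmetrization recovers $f_n$ a.e.\ using the a.e.-uniqueness of the maximal temporal coordinate. Parts (ii) and (iv), however, are handled by genuinely different arguments. For (ii), the paper argues by duality: $\bE[\langle DF,V\rangle_\fH]=\bE[F\,\dl(V)]=\bE[F\,\dl(V^t)]=\bE[\langle DF,V^t\rangle_\fH]$ for all $V\in\dom(\dl)$, and then invokes the density of $\dom(\dl)$ in $L^2(\Om;\fH)$ to get $(DF)^t=DF$; you instead exploit uniqueness of the chaos decomposition --- $F=\bE[F\mid\cF_t]$ forces $f_n=f_n^t$ $\fm$-a.e.\ --- and read off the vanishing of $D_{s,y,z}F$ for $s>t$ directly from \eqref{CD5b}. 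Your route is more elementary and yields the structural fact that kernels of $\cF_a$-measurable functionals are temporally supported in $[0,a]^n$, which you then reuse in part (iv). For (iv), in the elementary case $V=Y\ind_E$ with $E=(a,b]\times A\times\Gamma$, the paper avoids chaos expansions altogether: it applies the product rule for the add-one cost operator (Remark \ref{rem:add1} (iv)) to $YF$ with $F$ bounded Malliavin differentiable, kills the cross terms $\langle F\,DY,\ind_E\rangle_\fH$ and $\langle (DF)(DY),\ind_E\rangle_\fH$ by part (ii), and concludes by duality; you instead expand $Y$ in chaos, compute $\dl(V)$ from Lemma \ref{lem:dl}, and match it with $Y\,I_1(\ind_E)$ via the product formula (Proposition \ref{prop:prod2}), the contractions vanishing because of disjoint temporal supports. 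Both are legitimate; the paper's argument sidesteps all product-formula integrability checks, whereas yours avoids the boundedness/truncation manipulations with $Y$ and the test variable $F$. The one point you should make explicit is the interchange of the infinite chaos sum of $Y$ with multiplication by $I_1(\ind_E)$: apply Proposition \ref{prop:prod2} to the partial sums $Y_M=\bE[Y]+\sum_{m\le M}I_m(g_m)$, note that $Y_M\,I_1(\ind_E)\to Y\,I_1(\ind_E)$ in $L^1(\Om)$ by Cauchy--Schwarz while the corresponding chaos series converges in $L^2(\Om)$ thanks to \eqref{condV}, and identify the limits. The final passage from elementary to general predictable integrands (It\^o isometry plus closedness of $\dl$ through the duality relation) is the same in both proofs.
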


\begin{proof}

By going through \eqref{CD8}, \eqref{CD9}, and \eqref{CD9b}
with $M=\infty$ and $V^t$ in place of $V$,
we get $V^t\in \dom(\dl)$ and
\begin{align}
\dl(V^t) =\sum_{n=1}^\infty I_n( h^t_{n-1}   ).
\label{pst1}
\end{align}

On the other hand, since the conditional expectation is a bounded operator
on $L^2(\PP)$, we deduce from \eqref{int3db} that

\noi
\begin{align*}
\bE\big[ \dl(V) | \cF_t \big]
& = \sum_{n=1}^\infty \bE \big[ I_n(h_{n-1}) | \cF_t \big]
 =  \sum_{n=1}^\infty  I_n(h^t_{n-1}),
\end{align*}
which, together with \eqref{pst1}, concludes the proof of (i).

\smallskip

Next, we prove (ii). We can deduce from part (i)
and the duality relation \eqref{dualR}
for several times  that

\noi
\begin{align*}
\bE\big[ \langle DF, V \rangle_\fH \big]
&=  \bE[ F \dl(V) ] = \bE\big[  F \bE( \dl(V) | \cF_t ) \big] \\
&=\bE\big[  F  \dl(V^t)  \big] = \bE\big[ \langle DF, V^t \rangle_\fH \big]
\end{align*}
for any $V\in\dom(\dl)$. It follows that
\[
\bE \big[ \langle  DF- (DF)^t, V \rangle_\fH \big] = 0
\]
for any  $V\in\dom(\dl)$. 
Then, the density of $\dom(\dl)$ in $L^2(\Om; \fH)$
(see, e.g.,  Footnote \ref{ft10})
implies that $(DF)^t = DF$ almost surely.  Therefore, part (ii) is proved.

\smallskip

Now we prove the Clark-Ocone formula in (iii);
see also Theorem 10.4.1 in \cite{NN18}.
Assume that  $F$ has the form \eqref{CD3}. Then,

\noi
\begin{align}
\begin{aligned}
V(r,y,z)  &=  \bE\big[ D_{r, y, z} F | \cF_r \big] \\
& = \sum_{n=1}^\infty n \bE\big[ I_{n-1}(f_n(r,y,z, \bul) ) | \cF_r \big] \\
&=  \sum_{n=1}^\infty n   I_{n-1}(f^r_n(r,y,z, \bul) ) .
\end{aligned}
\label{def:V}
\end{align}

\noi
Put
\[
h_n(t_1, y_1, z_1, ... , t_n, y_n, z_n) = n f_n^{t_1}(t_1, y_1, z_1, ... , t_n, y_n, z_n  ).
\]
Then, (omitting the dummy variables $y_i, z_i$ to ease the notations)
\begin{align*}
\wt{h}_n(t_1, t_2, ... , t_n)
&= \frac{1}{n!} \sum_{\s\in\mathfrak{S}_n}
n f_n^{t_{\s(1)}  }(t_{\s(1)}, t_{\s(2)}, ..., t_{\s(n)} ) \\
&=  \frac{1}{(n-1)!} \sum_{k=1}^n \sum_{\s\in\mathfrak{S}_n}\ind_{\{\s(1)=k\}}
f_n(t_{\s(1)}, t_{\s(2)}, ..., t_{\s(n)} ) \ind_{\{  t_k \geq t_i, \, \forall i\neq k  \}} \\
& = f_n(t_1, ... , t_n) \,\, \,  \text{almost everywhere, since $f_n\in\fH^{\odot n}$.}
\end{align*}

\noi
Therefore, we deduce from Lemma \ref{lem:dl} that
$V$, given as in \eqref{def:V}, belongs to $\dom(\dl)$
and
\begin{align*}
\dl(V) &= \sum_{n=1}^\infty I_n(\wt{h}_n) =  \sum_{n=1}^\infty I_n(f_n)  \\
&= F - \bE[F].
\end{align*}

Finally, we prove the statement (iv).  First we consider the case where
$V$ is an elementary process as in \eqref{simpleX}:
\begin{align}\label{Bella}
V(t,x,z) =Y \ind_{(a, b]\times A\times\Gamma}(t, x, z)
\end{align}
with $Y\in\dom(D)$ bounded $\cF_a$-measurable, $a<b$, and
$\Leb(A) + \nu(\Gamma) <\infty$.
In this case,

\noi
\begin{align*}
\textup{RHS of \eqref{EXT0}}
= Y \wh{N}\big(  (a, b] \times A \times\Gamma\big)
= Y \dl\big( \ind_{(a,b]\times A\times\Gamma} \big),
\end{align*}

\noi
where the last equality follows from \eqref{EXT1}.
Let $F$ be any bounded random variable in $\dom(D)$.
 Then, in view of   Remark \ref{rem:add1} (iv),
we have   $YF\in\dom(D)$ with

\noi
\begin{align*} 
Y D_\xi F  = D_\xi(YF) - FD_\xi Y - (D_\xi F)(D_\xi Y).
\end{align*}

\noi
Thus, we can write 
\begin{align*}
\langle DF, V \rangle_\fH
&= \langle Y DF,   \ind_{(a,b]\times A\times\Gamma} \rangle_\fH \\
&=  \langle  D(YF),   \ind_{(a,b]\times A\times\Gamma} \rangle_\fH
- \langle  FDY,   \ind_{(a,b]\times A\times\Gamma} \rangle_\fH
- \langle  (DF)(DY),   \ind_{(a,b]\times A\times\Gamma} \rangle_\fH;
\end{align*}

\noi
and moreover, by part (ii) of Lemma \ref{lem:CE2},
we get
\begin{align}\label{EXT2}
\langle DF, V \rangle_\fH
&=  \langle  D(YF),   \ind_{(a,b]\times A\times\Gamma} \rangle_\fH.
\end{align}

\noi
Therefore,  we deduce from the duality relation
\eqref{dualR} with \eqref{EXT1} and \eqref{EXT2} that

\noi
\begin{align}
\begin{aligned}
\bE\big[ \langle D F,  V \rangle_\fH \big]
&= \bE\big[ \langle  D(YF),   \ind_{(a,b]\times A\times\Gamma} \rangle_\fH \big]  \\
&= \bE\big[  F   Y \wh{N}\big( (a, b]\times A\times\Gamma \big) \big]
\end{aligned}
\notag
\end{align}
for any $F$ bounded Malliavin differentiable,
 which implies       \eqref{EXT0} with $V\in\dom(\dl)$.

For a general process  $V\in L^2(\Omega; \fH)$ that is predictable,
there is a sequence $\{ V^{(k) } \}_{k\geq 1}$ of elementary processes (i.e. linear combination of functions as
in \eqref{simpleX}) such that
\[
\| V^{(k) }  - V \|_{L^2(\Omega; \fH)} \to 0
\]
as $k\to\infty$; see, e.g.,  \cite{B15}.
 By previous step, we know that \eqref{EXT0}
holds   for $V = V^{(k) }$, $k\geq 1$;
and $\dl(V^{(k) })$ converges in $L^2(\Omega)$ to some limit $G$,
by It\^o isometry.
Applying duality relation \eqref{dualR} again, we see that
$\dl$ is a closed operator meaning that $V$, as the $L^2(\Omega; \fH)$-limit of
$V^{(k) } \in \dom(\dl)$, also belongs to $\dom(\dl)$:
for any $F\in\dom(D)$,

\noi
\begin{align*}
\bE\big[ \langle DF, V \rangle_\fH \big]
&=\lim_{k\to\infty} \bE\big[ \langle DF, V^{(k) }  \rangle_\fH \big] \\
&=\lim_{k\to\infty} \bE\big[  F \dl( V^{(k) } )   \big]
=  \bE\big[  F G  \big].
\end{align*}

\noi
It follows that $V\in\dom(\dl)$ and $\dl(V) = G$. This concludes
the proof of Lemma \ref{lem:CE2}.
\qedhere

\end{proof}

\begin{lemma} \label{lem:Fubi}
Let $(E, \mu)$ be a finite measure space.

\smallskip
\noi
{\rm (i)} Suppose that $F(\th)\in\dom(D)$ for every $\th\in E$ such that

\noi
\begin{align}
\bE \int_E    \big( | F(\th) |^2 +  \| D F(\th) \|^2_\fH \big) \mu(d\th) <\infty.
\label{Fubi1}
\end{align}

\noi
Then, $\int_E F(\th) \mu(d\th)$ belongs to $\dom(D)$ with
\[
D_\xi  \int_E F(\th) \mu(d\th) = \int_E D_\xi F(\th) \mu(d\th)
\]
almost surely for $\fm$-almost every $\xi\in Z$.

\smallskip
\noi
{\rm (ii)} \textup{(Stochastic Fubini's theorem)} Suppose  that $G(\th) \in \dom(\dl)$  for each $\th\in E$ such that
$\int_{E} G(\th) \mu(d\th)$ also belongs to  $\dom(\dl)$
and

\noi
\begin{align}
\bE \int_E  \big(  | \dl( G(\th) ) |^2 +   \| G(\th) \|_{\fH}^2\big)  \mu(d\th) < \infty.
\label{Fubi2}
\end{align}

\noi
Then,

\noi
\begin{align}
\int_{E} \delta\big( G(\th) \big) \mu(d\th)
= \delta \bigg( \int_{E} G(\th) \mu(d\th) \bigg).
\label{Fubi3}
\end{align}

\end{lemma}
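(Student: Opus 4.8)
The plan is to treat the two parts by reducing each statement to the already-established duality relation \eqref{dualR} and the chaos-expansion machinery, exploiting the fact that both $D$ and $\dl$ are closed operators characterized by their action against test elements. For part (i), I would first verify that $H := \int_E F(\th)\,\mu(d\th)$ lies in $L^2(\Om)$ and then identify its chaos expansion. Writing each $F(\th) = \bE[F(\th)] + \sum_{n\ge 1} I_n(f_n(\th,\bul))$ with symmetric kernels $f_n(\th,\bul)\in\fH^{\odot n}$, I would use the orthogonality \eqref{int3d} together with condition \eqref{Fubi1} (via a Minkowski/Cauchy–Schwarz argument) to justify interchanging the $\mu$-integral with the infinite chaos sum, obtaining $H = \bE[H] + \sum_{n\ge1} I_n\big(\int_E f_n(\th,\bul)\,\mu(d\th)\big)$. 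The hypothesis \eqref{Fubi1} controls $\sum_n n!\,n\,\|f_n(\th,\bul)\|^2$ integrated against $\mu$, so by Jensen and Fubini the symmetric kernels $g_n := \int_E f_n(\th,\bul)\,\mu(d\th)$ satisfy $\sum_n n!\,n\,\|g_n\|^2 < \infty$, placing $H\in\dom(D)$. The derivative formula \eqref{CD5b} then gives $D_\xi H = \sum_n n\,I_{n-1}(g_n(\xi,\bul)) = \int_E D_\xi F(\th)\,\mu(d\th)$ for $\fm$-a.e.\ $\xi$, where the last identity is again an interchange of $\mu$-integral and chaos sum justified by \eqref{Fubi1}.

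For part (ii), the cleanest route is duality rather than kernel bookkeeping. I would fix an arbitrary $F\in\dom(D)$ and compute, using the duality relation \eqref{dualR},
\begin{align*}
\bE\Big[ F\,\dl\Big(\int_E G(\th)\,\mu(d\th)\Big)\Big]
= \bE\Big[\Big\langle DF,\ \int_E G(\th)\,\mu(d\th)\Big\rangle_\fH\Big]
= \int_E \bE\big[\langle DF, G(\th)\rangle_\fH\big]\,\mu(d\th),
\end{align*}
where the interchange of $\bE$, the $\fH$-inner product, and the $\mu$-integral is licensed by \eqref{Fubi2} (through $\bE\int_E\|G(\th)\|_\fH^2\,\mu(d\th)<\infty$ and $\|DF\|_\fH\in L^2$) via Cauchy–Schwarz and ordinary Fubini. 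Applying \eqref{dualR} once more inside the integral gives $\bE[\langle DF, G(\th)\rangle_\fH] = \bE[F\,\dl(G(\th))]$, and then a final Fubini interchange—justified by the $\bE\int_E|\dl(G(\th))|^2\,\mu(d\th)<\infty$ part of \eqref{Fubi2}—yields
\begin{align*}
\bE\Big[ F\,\dl\Big(\int_E G(\th)\,\mu(d\th)\Big)\Big]
= \bE\Big[ F \int_E \dl(G(\th))\,\mu(d\th)\Big].
\end{align*}
Since this holds for every $F\in\dom(D)$ and $\dom(D)$ is dense in $L^2(\Om)$, the two $L^2(\Om)$ random variables coincide, which is exactly \eqref{Fubi3}; the hypothesis that $\int_E G(\th)\,\mu(d\th)\in\dom(\dl)$ ensures the left-hand side is well defined.

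The main obstacle in both parts is the rigorous justification of the Fubini-type interchanges, since we are commuting a deterministic $\mu$-integral with infinite chaos sums (part (i)) and with the unbounded operator $\dl$ together with an expectation (part (ii)). In each case the integrability conditions \eqref{Fubi1} and \eqref{Fubi2} are precisely calibrated to supply the dominating bound: for part (i) one checks $\int_E\big(\sum_n n!\,n\,\|f_n(\th,\bul)\|_{\fH^{\otimes n}}^2\big)^{1/2}\,\mu(d\th)<\infty$ by Cauchy–Schwarz against the finite measure $\mu$, and for part (ii) the two terms in \eqref{Fubi2} control the two Fubini steps separately. I expect no essential difficulty beyond bookkeeping once these dominations are in place; the finiteness of $\mu(E)$ is what makes the Cauchy–Schwarz estimates against $\mu$ go through cleanly.
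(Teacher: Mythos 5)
Your proposal is correct and follows essentially the same route as the paper's proof: part (i) via chaos expansions, the bound $\sum_n n!\,n\int_E\|f_n(\th)\|^2_{\fH^{\otimes n}}\mu(d\th)<\infty$ derived from \eqref{Fubi1}, and term-by-term interchange of the $\mu$-integral with multiple integrals justified by orthogonality; part (ii) via testing against $F\in\dom(D)$ with the duality \eqref{dualR}, Fubini interchanges licensed by \eqref{Fubi2}, and density of $\dom(D)$ in $L^2(\Om,\s\{N\},\PP)$. The only cosmetic difference is that the paper verifies the kernel-level identity $I_n\big(\int_E f_n(\th)\mu(d\th)\big)=\int_E I_n(f_n(\th))\mu(d\th)$ by a duality computation against an arbitrary $g\in\fH^{\odot n}$, which is the same Fubini bookkeeping you describe.
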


\begin{proof}
%
%
(i) Suppose $F(\th)\in\dom(D)$ admits the chaos expansion
\begin{align}
F(\th) =  f_0(\th) + \sum_{n=1}^\infty I_n( f_n(\th)),
\notag 
\end{align}

\noi
where  $f_n(\th)\in \fH^{\odot n}$ for every $n\in\N_{\geq 1}$ and for every $\th\in E$.
Then, the condition \eqref{Fubi1} implies that

\noi
\begin{align}
\sum_{n\geq 1} n! n \int_E \| f_n(\th)\|^2_{\fH} \mu(d\th ) <\infty.
\label{Fubi5}
\end{align}

Fix any $g\in\fH^{\odot n}$ with $n\geq 1$. Then,
we deduce from   \eqref{int3d} and Fubini's theorem
with \eqref{Fubi5} that

\noi
\begin{align*}
\bE\bigg[  I_n(g) I_n\Big(  \int_E f_n(\th) \mu(d\th) \Big) \bigg]
&= n! \int_E \langle f_n(\th), g \rangle_{\fH^{\otimes n}} \mu(d\th) \\
&=  \int_E  \bE\big[ I_n(g) I_n(  f_n(\th) ) \big] \mu(d\th)  \\
&= \bE\bigg[  I_n(g) \int_E  I_n\big(  f_n(\th) \big) \mu(d\th)  \bigg],
\end{align*}

\noi
which implies that almost surely
\begin{align}
I_n\Big(  \int_E f_n(\th) \mu(d\th) \Big)
=  \int_E  I_n\big(  f_n(\th) \big) \mu(d\th).
\label{Fubi6}
\end{align}
Then  it is straightforward to generalize the above argument to
show that

\noi
\begin{align}
\int_E F(\th) \mu(d\th) = \int_E f_0(\th)\mu(d\th)
+  \sum_{n=1}^\infty I_n\Big(  \int_E f_n(\th) \mu(d\th) \Big),
\notag 
\end{align}

\noi
which, together with \eqref{Fubi5}-\eqref{Fubi6} and orthogonality
relation  \eqref{int3d}, implies that
$\int_E F(\th) \mu(d\th)$ belongs to $\dom(D)$
and

\noi
\begin{align*}
D_\xi \int_E F(\th) \mu(d\th)
&= \sum_{n=1}^\infty  n I_{n-1}\Big(  \int_E f_n(\th, \xi, \bul) \mu(d\th) \Big) \\
&= \sum_{n=1}^\infty  n \int_E I_{n-1} \big( f_n(\th, \xi, \bul)\big)  \mu(d\th)  \\
&= \sum_{n=1}^\infty   \int_E  D_\xi I_{n} \big( f_n(\th)\big)  \mu(d\th)
=  \int_E  D_\xi  F(\th)  \mu(d\th)
\end{align*}

\noi
almost surely. This proves (i).

Next, we prove (ii). Let $F\in\dom(D)$.
Then, we deduce from duality relation \eqref{dualR}
and Fubini's theorem with the condition \eqref{Fubi2}
that

\noi
\begin{align}
\begin{aligned}
\bE\bigg[ F \dl  \Big(\int_E G(\th) \mu(d\th)  \Big)  \bigg]
&= \bE\Big\langle DF, \int_E G(\th) \mu(d\th) \Big\rangle_{\fH} \\
&= \bE \int_Z D_\xi F  \int_E G(\th, \xi) \mu(d\th)  \fm(d\xi) \\
&= \int_E \bE   \langle D F, G(\th) \rangle_\fH  \,\mu(d\th)   \\
&=  \int_E \bE \big[  F \dl( G(\th) ) \big]   \,\mu(d\th) \\
&= \bE \bigg[ F  \int_E  \dl( G(\th) )  \,\mu(d\th)  \bigg].
\end{aligned}
\label{Fubi8}
\end{align}
Since $\dom(D)$ is dense in $L^2(\Om, \sigma\{N\}, \PP)$,
we obtain \eqref{Fubi3} from  \eqref{Fubi8}.
\qedhere

\end{proof}

We conclude this subsection with a remark on the add-one cost operator
$D^+_\xi$ that coincides with Malliavin derivative operator $D$
on $\dom(D)$.

\begin{remark} \label{rem:add1} \rm
(i) In this paper, we are mainly concerned with distributional properties.
In view of \cite[Corollary 3.7]{LP18}, we assume that the Poisson
random measure $N$ (from Definition \ref{def:PRM}) is a proper simple point process of the form
\[
N = \sum_{n=1}^\kappa \dl_{Z_n},
\]
where $\{Z_n\}_{n\geq 1}$ are independent random variables with values in $Z$,  $\kappa$ is a
random variable with values in $\N_{\geq 1}\cup\{+\infty\}$,
and $\dl_z$ is the Dirac mass at $z\in Z$. With probability $1$, these points are distinct (since $\fm$ 
is diffusive).

\smallskip
\noi
(ii) Since $N$ is a random variable with values in $\mathbf{N}_\s$, 
according to 
Doob's functional representation,
for any real-valued random variable $F$ that is $\s\{N\}$-measurable,
we can write $F = \ff(N)$ for some  representative $\ff: \bfN_\s\to \R$
that is $\scrN_\s$-measurable; see, e.g., \cite[Lemma 1.14]{Kall21}.
 With such a functional representation,
  the add-one cost operator is given by
\[
D^+_\xi F :=  \ff(N+\dl_\xi) - \ff(N).
\]
 Since the points $\{Z_n\}_{n\geq 1}$ in the above representation are distinct, 
 the ``add-one'' cost operator indeed adds one more point $\xi$ to this representation, 
 hence justifying its name.
It is known that  for $F\in\dom(D)$, one has $D^+F = DF$; see, e.g.,
\cite[Theorem 3]{Last16}. 
A similar result also holds: if $\bE \int_Z | D^+ F |^2 \fm(d\xi) <\infty$,
then $F\in\dom(D)$ and $D^+ F = DF$;
see, e.g., \cite[Lemma 3.1]{PT13}.

\smallskip
\noi
(iii) Suppose that $F = \ff(N) \in\dom(D)$ and $\phi: \R\to\R$ is Lipschitz continuous
with Lipschitz constant $\Lip(\phi)$. Due to a lack of derivation property of $D$,
the neat chain rule $D^+ \phi(F) = \phi'(F) D^+ F$ does not hold in general.
Nevertheless, one has  $\phi(F)\in\dom(D)$. Indeed,

\noi
\begin{align}\notag
\big| D^+_\xi \phi(F) \big| &= \big|\phi( \ff(N+\dl_\xi)) - \phi( \ff(N)) \big|
\leq \Lip(\phi)  | D^+_\xi F |,
\end{align}

\noi
which, together with (ii), implies that $\phi(F)\in\dom(D)$
with

\noi
\begin{align}\label{add1b}
|D_\xi \phi(F)| \leq  \Lip(\phi)| D_\xi F|.
\end{align}

\noi
This   leads to a generalization of the Poincar\'e inequality
\eqref{Poi1}:
\begin{align}
\Var  ( \phi(F)  ) \leq  \Lip^2(\phi) \bE [ \| DF \|_\fH^2  ].
\label{Poi1b}
\end{align}

\noi
 Note that the inequalities \eqref{add1b}-\eqref{Poi1b} will be used
in the proof of Theorem \ref{thm:main} (i); see \eqref{ERG2}.
Besides, one can observe that 
 for any $F\in\dom(D)$, the truncated random variable $F_M : =( M \wedge F) \vee (-M)$
is a bounded random variable that  belongs to $\dom(D)$ for any $M>0$.
 Such an observation has been implicitly used in the proof of Lemma \ref{lem:CE2} (iv);
see \eqref{Bella} and see also Footnote \ref{ft77}.

\smallskip
\noi
(iv) 
Let $\cA = L^\infty(\Omega, \s\{N\}, \mathbb{P}) \cap \dom(D)$.
Then, $\cA$ is stable under multiplications.
Indeed, for $F = \ff(N), G= \fg(N)\in \cA$ 
(with $\ff, \fg$ bounded $\mathscr{N}_\s$-measurable),
we have 

\noi
\begin{align}
\begin{aligned}
D^+_\xi (FG)
& = \ff(N+\dl_\xi) \fg(N + \dl_\xi) -  \ff(N) \fg(N) \\
&= \big[  \ff(N+\dl_\xi) - \ff(N) \big] \fg(N) +   \ff(N) \big[  \fg(N+\dl_\xi) -  \fg(N)\big]   \\
&\qquad\qquad  +  \big[  \ff(N+\dl_\xi) - \ff(N) \big]  \cdot \big[ \fg(N+\dl_\xi) - \fg(N) \big]    \\
& =  FD^+_\xi G +   GD^+_\xi F +  (D^+_\xi F) D^+_\xi G
\end{aligned}
\notag 
\end{align}

\noi
 with $D^+_\xi F,  D^+_\xi G$ uniformly bounded, 
so that  $D^+(FG)\in L^2(\Om; \fH)$. This implies $FG\in\dom(D)$,
in view of the aforementioned result from \cite[Lemma 3.1]{PT13}.
Therefore, $\cA$ is stable under multiplications.
In particular, we can write for 
 $F, G\in \cA $ that 
 
 \noi
\begin{align} \notag
 D_\xi (FG)
= FD_\xi G + GD_\xi F  +  (D_\xi F)D_\xi G
\end{align}
almost surely for $\fm$-almost every $\xi\in Z$.

\end{remark}

\subsection{Poincar\'e inequalities} \label{SUB23}
Recall from the Poincar\'e inequality \eqref{Poi1}  (see also \eqref{Poi1b})
that the variance $\Var(F)$ of a Malliavin differentiable random variable  $F$
is controlled by the first Malliavin derivative $DF$.
That is, if $\| DF \|_\fH$ is typically small, then
the random variable $F$ has small fluctuations.
It was first in a paper by Chatterjee \cite{Ch09} that
a possible second-order extension of (Gaussian) Poincar\'e
inequality was investigated.
Suppose  $F = g(X_1, ... , X_m) $ is a nice function of i.i.d. standard normal
random variables $\{ X_i\}_{i=1}^m$.
If the squared operator norm of
the Hessian matrix $\nb^2 g(X_1, ... , X_m)$ is typically smaller compared
to $\nb g(X_1, ... , X_m)$, then $F$ is close to a linear combination of $X_i$'s
and thus
 approximately Gaussian,
with  the proximity  measured in total-variation distance \eqref{distTV};
see Theorem 2.2 in \cite{Ch09}
within the development of Stein's method. 
 This quantitative bound is
then known as the second-order Gaussian Poincar\'e inequality.
And it has been generalized by Nourdin, Peccati, and Reinert \cite{NPR09}
to the case where $F$ is a general Malliavin differentiable
random variable (with respect to an isonormal Gaussian process)
and may depend on infinitely many coordinates (e.g.,
$F= g( \{X_i\}_{i\in\N})$). See also Vidotto's improvement in
\cite{Vid20}. In a recent  joint work \cite{BNQSZ} with Nualart and Quer-Sardanyons,
we implemented this second-order Gaussian Poincar\'e
inequality to prove the quantitative central limit theorem (CLT)
for stochastic wave equation driven by colored-in-time Gaussian noise.
See also a  study for stochastic heat equation in
 \cite{NXZ22} by Nualart, Xia, and the second author.

\medskip
\noi
$\bul$ {\bf Second-order Poincar\'e inequality on the Poisson space.}
In  \cite{LPS16},  Last, Peccati, and Schulte extended the second-order
Gaussian Poincar\'e inequality to the Poisson setting. 
One can  apply the results in \cite{LPS16} to obtain the quantitative
CLTs under the assumption of finite $m_4$,
which is a more restrictive assumption than \eqref{cond:al}.
In a recent work  \cite{Tara},   T. Trauthwein has 
improved the second-order Poincar\'e
inequalities by  imposing   minimal moment assumptions.
With this new ingredient,
we are able to obtain the quantitative CLT (and a corresponding functional CLT)
for the hyperbolic Anderson model \eqref{wave}
under the   assumption \eqref{cond:al};
see Theorem \ref{thm:main}.

Let us first introduce several distances  for distributional approximation.
Suppose  $F, G$ are   real random variables with
distribution measures $\mu$ and  $\nu$, respectively.

\medskip
\noi
(i)  $d_{\rm FM}$ denotes the Fortet-Mourier metric, also known
as the bounded Wasserstein distance:

\noi
\begin{align}
\begin{aligned}
d_{\rm FM}(F, G)  &= d_{\rm FM}(\mu, \nu) \\
&= \sup\big\{ | \bE[ h(F)  ]  - \bE[ h(G)  ] | :
\|h \|_\infty + \Lip(h) \leq 1 \big\}.
\end{aligned}
\notag 
\end{align}

\noi
It is well known that $d_{\rm FM}$ characterizes the weak convergence
on $\R$.

\medskip
\noi
(ii) $d_{\rm Wass}$ denotes the
$1$-Wasserstein distance:

\noi
\begin{align}
\begin{aligned}
d_{\rm Wass}(F, G)  &= d_{\rm Wass}(\mu, \nu) \\
&= \sup\big\{ | \bE[ h(F)  ]  - \bE[ h(G)  ] | :
  \Lip(h) \leq 1 \big\}.
\end{aligned}
\notag 
\end{align}

\noi
It is trivial that  $d_{\rm Wass}(F, G) \geq d_{\rm FM}(F, G)$.

\medskip
\noi
(iii) $d_{\rm Kol}$ denotes the Kolmogorov distance:

\noi
\begin{align}
\begin{aligned}
d_{\rm Kol}(F, G)  &= d_{\rm Kol}(\mu, \nu) = \sup\big\{ | \bE[ \ind_{(-\infty, t] }(F)  ]  - \bE[ \ind_{(-\infty, t] }(G)  ] | :
 t\in\R \big\} \\
 &= \sup\big\{ |  \PP(F\leq t) - \PP(G\leq t)   | :
 t\in\R \big\}.
\end{aligned}
\notag 
\end{align}

\noi
Kolmogorov distance is   a very natural metric in studying the normal approximation,
in view of the fact that for a sequence of real-valued random variables $\{F_n\}_{n\in\N}$,
$F_n$ converges in law to a standard normal random variable $Y$ (i.e.
$d_{\rm FM}(F_n, Y)\to 0$)
 if and only if
$d_{\rm Kol}(F_n, Y) \to 0$ as $n\to\infty$; see \cite[Proposition C.3.2]{NP12}.
It is also well known that
\begin{align}
d_{\rm Kol}(F, Y) \leq   \sqrt{ d_{\rm Wass}(F, Y)},
\label{KolWass}
\end{align}

\noi
when $Y \sim \NN(0,1)$;
see, for example, \cite[Proposition 1.2]{Ross11}.

\medskip
\noi
(iv)   The aforementioned total-variation distance is defined by
\begin{align}
\begin{aligned}
d_{\rm TV}(F, G) &= d_{\rm TV}(\mu, \nu)
= \sup\big\{ | \PP(F\in B) - \PP(G\in B) | :
 B\in\cB(\R) \big\}.
\end{aligned}
\label{distTV}
\end{align}

\noi
It is trivial that $d_{\rm TV}(F, G) \geq d_{\rm Kol}(F, G)$.
The total-variation distance is much stronger than weak convergence.
For example, consider $\{Y_i\}_{i\in\N}$ i.i.d. Poisson random variables
with mean $1$,  $F_n:=\frac{1}{\sqrt{n}} (Y_1+ ... + Y_n - n)$,
which is an element
of the first Poisson Wiener chaos $\bC_1$,
converges in law to $Y\sim\NN(0,1)$ as $n\to\infty$; while due to discrete nature
of $F_n$, $d_{\rm TV}(F_n, Y) = 1$ for all $n$. For this reason, we will not consider
total-variation distance for our quantitative CLTs.

\medskip

In what follows,  we present  the second-order $p$-Poincar\'e inequality by Trauthwein
\cite{Tara}.\footnote{The bounds in \cite{Tara} are stated in terms of add-one cost operator $D^+$.
There, Trauthwein used the notation $D$ to denote the add-one cost operator. As these two
operator coincide on $\dom(D)$, the notational difference shall not cause any
ambiguity for readers of  the current paper.
}
Recall that in our paper, all Poisson functionals are defined over the Poisson
random measure $N$ on $Z=\R_+\times\R\times\R_0$
with intensity measure $\fm = \Leb\times\nu$; see \eqref{ZZm}-\eqref{LevyM}
and Section \ref{SEC2}.

\begin{proposition} 
\label{prop:tara}   \textup{(\cite[Theorem 3.4]{Tara})}
Let $F\in\dom(D)$ with $\bE[F]=0$ and  $\Var(F)=\sigma^2 > 0$. Then, 
for any $p,q\in (1,2]$,

\noi
\begin{align}
d_{\rm FM}\left( \frac{F}{\sigma}, Y\right) \leq d_{\rm Wass}\left(\frac{F}{\sigma}, Y\right)  \leq    \ga_1 +  \ga_2 +  \ga_3
\label{2nd:WassB}
\end{align}
and

\noi
\begin{align}
  d_{\rm Kol}\left(\frac{F}{\sigma}, Y\right)  \leq  \sqrt{ \frac{\pi}{2}} (  \ga_1 +  \ga_2)   +  \ga_4 +  \ga_5+  \ga_6 +  \ga_7,
\label{2nd:KolB}
\end{align}

\noi
where  $Y\sim \NN(0,1)$ and the seven quantities $ \ga_1, ...,  \ga_7$ are given as follows:

\noi
\begin{align}
\begin{aligned}
 \ga_1&:=  \frac{2^{\frac2p + \frac12} }{\sqrt{\pi}}
 \sigma^{-2} \bigg( \int_{Z}  \bigg[ \int_Z  \| D^+_{\xi_2}F \|_{2p}  \| D^+_{\xi_1}D^+_{\xi_2}F\|_{2p} 
  \,  \fm(d\xi_2) \bigg]^p
  \fm(d\xi_1) \bigg)^{\frac1p}
\\
 \ga_2&:=\frac{2^{\frac2p - \frac12} }{\sqrt{\pi}}
\sigma^{-2} \bigg( \int_{Z}  \bigg[ \int_Z   \| D^+_{\xi_1}D^+_{\xi_2}F\|^2_{2p}  \,  \fm(d\xi_2) \bigg]^p
  \fm(d\xi_1) \bigg)^{\frac1p}
\\
 \ga_3&:= 2 \sigma^{-(q+1)}\int_Z \| D^+_\xi F \|_{q+1}^{q+1}  \, \fm(d\xi)
\\
 \ga_4&:= 2^{\frac{2}{p}} \sigma^{-2} \bigg( \int_Z \| D^+_\xi F \|_{2p}^{2p}  \, \fm(d\xi) \bigg)^{\frac1p}
\\
 \ga_5&:= (4p)^{\frac1p} \sigma^{-2} \bigg( \int_{Z^2} \| D^+_{\xi_1}D^+_{\xi_2}  F \|_{2p}^{2p}
\, \fm(d\xi_1)\fm(d\xi_2)  \bigg)^{\frac1p}
\\
 \ga_6&:=(2^{2+p}p)^{\frac1p} \sigma^{-2} \bigg( \int_{Z^2} \| D^+_{\xi_1}D^+_{\xi_2}  F \|_{2p}^{p}
 \| D^+_{\xi_1}F \|_{2p}^p
\, \fm(d\xi_1)\fm(d\xi_2)  \bigg)^{\frac1p},
\end{aligned}
\label{gamma16}
\end{align}
\noi
and
\begin{align}
 \ga_7:= (8p)^{\frac1p} \sigma^{-2}
 \bigg( \int_{Z^2} \| D^+_{\xi_1}D^+_{\xi_2}  F \|_{2p}
 \| D^+_{\xi_1}F \|_{2p}  \| D^+_{\xi_2}F \|_{2p}^{2(p-1)}
\, \fm(d\xi_1) \fm(d\xi_2)  \bigg)^{\frac1p}.
\label{gamma7}
\end{align}

\end{proposition}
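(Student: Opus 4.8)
The plan is to run the Malliavin--Stein method on the Poisson space, feed into it the integration-by-parts formula for $N$, and then control the resulting variance by a \emph{second} application of the Poincar\'e inequality \eqref{Poi1}. Write $W=F/\s$ throughout, and let $\mathfrak{L}^{-1}$ denote the pseudo-inverse of the Ornstein--Uhlenbeck generator associated with $N$ (a symbol chosen distinct from the L\'evy noise $L$), acting as multiplication by $-1/n$ on the $n$-th chaos $\bC_n$. Since $\bE[F]=0$, this gives both the representation $F=\delta(-D\mathfrak{L}^{-1}F)$, with $\delta$ the Kabanov--Skorohod integral of Subsection \ref{SUB22}, and the identity $\bE[\langle DF,-D\mathfrak{L}^{-1}F\rangle_\fH]=\Var(F)=\s^2$ coming from \eqref{CD4}. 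For the Wasserstein (hence Fortet--Mourier) bound I would fix a $1$-Lipschitz $h$ and let $f=f_h$ solve the Stein equation $f'(w)-wf(w)=h(w)-\bE[h(Y)]$, recording $\|f'\|_\infty\le\sqrt{2/\pi}$ and $\|f''\|_\infty\le 2$; for the Kolmogorov bound one takes $h=\ind_{(-\infty,t]}$ and works with the correspondingly less regular solution.

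The core is to estimate $\bE[f'(W)-Wf(W)]$. By $F=\delta(-D\mathfrak{L}^{-1}F)$ and the duality relation \eqref{dualR},
\[
\bE[Wf(W)]=\s^{-1}\,\bE\big[\langle Df(W),-D\mathfrak{L}^{-1}F\rangle_\fH\big].
\]
Because $D$ is only an add-one cost operator (Remark \ref{rem:add1}), the chain rule fails and one must use the exact discrete expansion $D^+_\xi f(W)=f(W+\s^{-1}D^+_\xi F)-f(W)=\s^{-1}f'(W)D^+_\xi F+\tfrac12\s^{-2}f''(\eta_\xi)(D^+_\xi F)^2$ at an intermediate value $\eta_\xi$. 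Substituting and using the identity above, the quantity splits into a \emph{main term} $\bE\big[f'(W)\big(1-\s^{-2}\langle DF,-D\mathfrak{L}^{-1}F\rangle_\fH\big)\big]$ and a \emph{remainder} governed by $\|f''\|_\infty$ and the jump quantity $\int_Z\bE\big[|D_\xi F|^{2}\,|D_\xi\mathfrak{L}^{-1}F|\big]\fm(d\xi)$; bounding the latter crudely, via the Mehler (Ornstein--Uhlenbeck) semigroup representation of $-D\mathfrak{L}^{-1}$ and $L^3$-contractivity, recovers a term of the type $\int_Z\|D^+_\xi F\|_3^3\fm(d\xi)$, i.e. $\ga_3$ at the endpoint $q=2$.

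The second-order structure enters through the main term, bounded by $\|f'\|_\infty\,\s^{-2}\,\sqrt{\Var(\langle DF,-D\mathfrak{L}^{-1}F\rangle_\fH)}$. I would then apply the Poincar\'e inequality \eqref{Poi1} to $\langle DF,-D\mathfrak{L}^{-1}F\rangle_\fH$ itself, so that its variance is dominated by $\bE\big[\|D\langle DF,-D\mathfrak{L}^{-1}F\rangle_\fH\|_\fH^2\big]$. Differentiating the inner product with the add-one product rule of Remark \ref{rem:add1}(iv) produces terms pairing a \emph{second} derivative $D^+_{\xi_1}D^+_{\xi_2}F$ against first derivatives; Cauchy--Schwarz in the chaos variables, the $L^p$-contractivity of the semigroup (which replaces each factor $D\mathfrak{L}^{-1}F$ by $DF$), and H\"older's inequality then collapse these into $\ga_1$ and $\ga_2$, giving \eqref{2nd:WassB}. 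For \eqref{2nd:KolB} the Stein solution has bounded but discontinuous derivative, so the smooth expansion cannot be used verbatim: one estimates the indicator increments directly, and the error from comparing $W$ with its perturbation $W+\s^{-1}D^+F$ on $\{W\le t\}$ yields the supplementary quantities $\ga_4,\dots,\ga_7$, while the main term still contributes $\sqrt{\pi/2}\,(\ga_1+\ga_2)$.

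The principal obstacle, and the heart of Trauthwein's improvement over \cite{LPS16}, is passing from the crude $L^4$- and $L^3$-control above (the endpoint $p=q=2$, which forces $m_4<\infty$ in our application) to the sharp $L^{2p}$ and $L^{q+1}$ norms for $p,q\in(1,2]$. This demands that each Cauchy--Schwarz/H\"older step be run with optimally chosen dual exponents and, more seriously, that the moments of the Kabanov--Skorohod integrals and multiple Wiener--It\^o--Poisson integrals appearing in $DF$ and $D^2F$ be estimated by Rosenthal-type inequalities valid on $L^{2p}$, since the hypercontractive $L^4$ bounds available in the Gaussian setting are absent here (cf. the discussion following Remark \ref{rem:main}). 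Tracking the explicit constants $2^{2/p+1/2}/\sqrt{\pi}$, $(4p)^{1/p}$, and the like through these optimized moment inequalities is the delicate bookkeeping that occupies the bulk of the argument; the Stein--Malliavin skeleton itself is routine once the representation $F=\delta(-D\mathfrak{L}^{-1}F)$ and the discrete Taylor expansion are in place.
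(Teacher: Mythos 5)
The paper does not prove this proposition at all: it is quoted verbatim as Theorem 3.4 of Trauthwein \cite{Tara}, so there is no internal argument to compare yours against. Judged on its own terms, your sketch reproduces the Last--Peccati--Schulte strategy of \cite{LPS16}: represent $F=\delta(-D\mathfrak{L}^{-1}F)$, run Stein's method, isolate the main term $\bE\big[f'(W)\big(1-\s^{-2}\langle DF,-D\mathfrak{L}^{-1}F\rangle_\fH\big)\big]$, and control it by $\s^{-2}\sqrt{\Var\big(\langle DF,-D\mathfrak{L}^{-1}F\rangle_\fH\big)}$ via a second application of the $L^2$ Poincar\'e inequality \eqref{Poi1}. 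That route proves the statement only at the endpoint $p=q=2$.

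The genuine gap is in your final paragraph, where the passage to $p,q\in(1,2]$ is dismissed as ``delicate bookkeeping'' with ``optimally chosen dual exponents.'' It is not: the variance step is structurally incompatible with exponents below $4$. After differentiating the inner product and applying Cauchy--Schwarz together with the Mehler contraction, the variance bound forces you to estimate quantities of the form $\bE\big[|D^+_{\xi_1}D^+_{\xi_2}F|^2\,|D^+_{\xi_2}F|^2\big]$; any H\"older splitting $\bE[X^2Y^2]\le\|X\|_{2a}^2\|Y\|_{2b}^2$ with $1/a+1/b=1$ has $\max(2a,2b)\ge 4$, so at least one factor necessarily comes out in $L^r$ with $r\ge 4$ --- you can never reach the product $\|D^+_{\xi_1}D^+_{\xi_2}F\|_{2p}\,\|D^+_{\xi_2}F\|_{2p}$ with both exponents $2p<4$, which is exactly what $\ga_1$ (and likewise $\ga_2$, $\ga_5$--$\ga_7$) requires. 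This is precisely why Trauthwein's proof abandons the $\mathfrak{L}^{-1}$/variance mechanism altogether and instead derives the bounds from moment estimates for Kabanov--Skorohod integrals of order $p\in(1,2]$ (the $p$-Poincar\'e inequalities of the title of \cite{Tara}), which never square the derivative terms; the improvement over \cite{LPS16} is a different mechanism, not constant-tracking through the same inequalities. As written, your sketch establishes at best the $p=q=2$ version of the proposition, i.e. the bound that would force $m_4<\infty$ in the paper's application and that the hypothesis \eqref{cond:al} is specifically designed to avoid.
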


Recall from Remark \ref{rem:add1} that $D^+$ denotes the add-one cost operator
that coincides with Malliavin derivative oprerator $D$ on $\dom(D)$.
The quantities  $\ga_1, \ga_2$ control the size of the fluctuations
of the second-order difference operator in a relative and an absolute
way so that a small size of $\ga_1 + \ga_2$
leads to the proximity of $F$ to its projection to the first Poisson Wiener chaos $\bC_1$.
And a small value of $\ga_3$ heuristically indicates that this projection to $\bC_1$
is close in distribution to a Gaussian random variable.
See \cite{LPS16, Tara} for more discussions. 
Note that the estimations of the three quantities $\ga_1, \ga_2$, and $\ga_3$
are sufficient to control the rate of convergence in the Wasserstein distance. 
Within Stein's method,
 it is in general
much more difficult to prove bounds in the Kolmogorov distance 
than to prove bounds in the Wasserstein distance, while maintaining  
the same rate of convergence.
In the current application, 
we will need to further estimate four more quantities 
($\ga_4, \ga_5$, $\ga_6$, and $\ga_7$), and we can get  
the same rate of convergence. 
Note that in view of the bound \eqref{KolWass}, we deduce from \eqref{2nd:WassB}
that
\[
d_{\rm Kol}\left(\frac{F}{\sigma}, Y\right) \leq \sqrt{\ga_1 +  \ga_2 + \ga_3},
\]
which would   lead to sub-optimal rates compared to  \eqref{2nd:KolB}.

\subsection{Moment inequalities}

Recall the definition of $G_t$ from \eqref{FSol} and define
\begin{align}
\begin{aligned}
\varphi_{t, R}(r,y)  &:=  \int_{-R}^R G_{t-r}(x-y)dx.
\end{aligned}
\label{Rosen6b}
\end{align}

\noi
We record below a few simple facts.

\begin{lemma} \label{lem:G}
{\rm (i)} For $t\in\R_+$, we have

\noi
\begin{align}
\begin{aligned}
 \int_{\R} G_t(y)dy &= t .
\end{aligned}
\label{factsG}
\end{align}

\smallskip
\noi
{\rm (ii)} For  $t  \geq s >0$, we have   $0 \leq  \varphi_{t,R} - \varphi_{s,R} \leq t -s$
and

\noi
\begin{align}
\begin{aligned}
\int_\R   \big[ \varphi_{t, R}(r,y)  - \varphi_{s, R}(r,y) \big] dy &= 2(t-s)R
\end{aligned}
\label{Rosen6c}
\end{align}

\noi
for any $r \in (0, s]$.

\smallskip
\noi
{\rm (iii)} For $0< s<t$,  we have
\noi
\begin{align}
\begin{aligned}
 \int_s^t  \int_{\R}  \varphi^2_{t, R}(r,y)   \, drdy
&\leq \frac{4}{3} R   (t-s)^3
\\
\int_s^t  \int_{\R}  \varphi^4_{t, R}(r,y)   \, drdy
&
\leq   2 R^2 (t-s)^4.
\end{aligned}
\label{Rosen7c} 
\end{align}
As a consequence, we have

\noi
\begin{align}
\int_s^t  \int_{\R}  \varphi^p_{t, R}(r,y)   \, drdy
\leq
\begin{cases}
 2^{\frac p2} R^{\frac p2} (t-s)^{2 + \frac p 2 } \,\,\,  &\text{for $p\in[2,4]$} \\
   2^{p-1} (t-s)^p R^2   &\text{for $p\in(4,\infty)$.}
 \end{cases}
\label{Rosen7z}
\end{align}

\end{lemma}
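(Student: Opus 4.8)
The plan is to establish the three parts in order, treating (i) and (ii) as warmups that feed directly into the estimates of (iii). For part (i), I would simply compute $\int_\R G_t(y)\,dy = \int_\R \tfrac12 \ind_{\{|y|<t\}}\,dy = \tfrac12 \cdot 2t = t$ using the definition \eqref{FSol}; this is immediate. For part (ii), I would first unpack $\varphi_{t,R}(r,y) = \int_{-R}^R G_{t-r}(x-y)\,dx = \tfrac12 \Leb\{x\in[-R,R] : |x-y| < t-r\}$, so that $\varphi_{t,R}(r,y)$ is the half-length of an intersection of two intervals. Since $t\geq s$ gives $G_{t-r} \geq G_{s-r}$ pointwise (the indicator over a larger set), monotonicity yields $\varphi_{t,R} \geq \varphi_{s,R}$. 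The upper bound $\varphi_{t,R}-\varphi_{s,R}\leq t-s$ follows because $\varphi_{t,R}(r,y)-\varphi_{s,R}(r,y) = \tfrac12\Leb\{x\in[-R,R]: s-r \leq |x-y| < t-r\}$, and for fixed $y$ the symmetric annulus $\{s-r \leq |x-y| < t-r\}$ has total length $2(t-s)$, halved by the factor $\tfrac12$, and is only shrunk by intersecting with $[-R,R]$. For the integral identity \eqref{Rosen6c}, I would use Tonelli to swap the order of integration: $\int_\R [\varphi_{t,R}-\varphi_{s,R}](r,y)\,dy = \int_{-R}^R \int_\R [G_{t-r}(x-y) - G_{s-r}(x-y)]\,dy\,dx$, and since $r\in(0,s]$ ensures both $t-r>0$ and $s-r\geq 0$, the inner integral equals $(t-r)-(s-r) = t-s$ by part (i) (with the convention \eqref{convention} handling the boundary $r=s$), giving $\int_{-R}^R (t-s)\,dx = 2(t-s)R$.

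For part (iii), the key structural observation is that $\varphi_{t,R}(r,y) \leq t-r$ always (taking $R=\infty$ in part (i)) and also $\varphi_{t,R}(r,y) \leq R$ trivially, while $\varphi_{t,R}(r,y)$ is supported where the two intervals overlap. The cleanest route is to bound $\int_\R \varphi_{t,R}^p(r,y)\,dy$ for fixed $r$, then integrate in $r\in[s,t]$. I would split using $\varphi_{t,R}^p \leq \varphi_{t,R}^2 \cdot (t-r)^{p-2}$ for the $L^2$-type control, exploiting the pointwise bound $\varphi_{t,R}\leq t-r$. For the $L^2$ bound itself, I would compute $\int_\R \varphi_{t,R}^2(r,y)\,dy$ by writing $\varphi_{t,R}^2 = \varphi_{t,R}\cdot\varphi_{t,R}$ and using $\varphi_{t,R}(r,y)\leq t-r$ together with $\int_\R \varphi_{t,R}(r,y)\,dy = 2R(t-r)$ (the $s=0$ case of the reasoning in part (ii), again via Tonelli and part (i)); this gives $\int_\R \varphi_{t,R}^2(r,y)\,dy \leq (t-r)\cdot 2R(t-r) = 2R(t-r)^2$. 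Integrating in $r$ yields $\int_s^t 2R(t-r)^2\,dr = \tfrac{2}{3}R(t-s)^3$, which is even sharper than the stated $\tfrac43 R(t-s)^3$, so the bound holds comfortably. For the $L^4$ bound I would similarly use $\varphi_{t,R}^4 \leq (t-r)^2 \varphi_{t,R}^2$ and the just-derived $L^2$ bound to get $\int_\R \varphi_{t,R}^4(r,y)\,dy \leq (t-r)^2\cdot 2R(t-r)^2 = 2R(t-r)^4$, but this carries only one factor of $R$, whereas \eqref{Rosen7c} demands $R^2$; so instead I would use the complementary pointwise bound $\varphi_{t,R}\leq R$, giving $\varphi_{t,R}^4 \leq R^2\varphi_{t,R}^2$ and hence $\int_\R \varphi_{t,R}^4(r,y)\,dy \leq R^2\cdot 2R(t-r)^2$—which overshoots in $R$. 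The correct balance is $\varphi_{t,R}^4 \leq R\,(t-r)\,\varphi_{t,R}^2$ using one factor of each pointwise bound, giving $\int_\R \varphi_{t,R}^4\,dy \leq R(t-r)\cdot 2R(t-r)^2 = 2R^2(t-r)^3$, then $\int_s^t 2R^2(t-r)^3\,dr = \tfrac12 R^2(t-s)^4 \leq 2R^2(t-s)^4$.

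Finally, for the consolidated bound \eqref{Rosen7z}, I would interpolate between the two regimes using the pointwise estimates $\varphi_{t,R}\leq t-r$ and $\varphi_{t,R}\leq R$. The main obstacle—and the only genuinely delicate point—is tracking which pointwise bound to deploy in each exponent range so that the powers of $R$ and $(t-s)$ come out matching the stated constants; the homogeneity in $R$ is the subtle part, since naive application of a single pointwise bound gives the wrong power of $R$, as the $L^4$ computation above illustrates. For $p\in[2,4]$ I would write $\varphi_{t,R}^p \leq \big(R(t-r)\big)^{(p-2)/2}\varphi_{t,R}^2$ by splitting the surplus power $p-2$ symmetrically between the two pointwise bounds, then apply the $L^2$ estimate to obtain $\int_s^t\int_\R \varphi_{t,R}^p\,dr\,dy \leq (t-s)^{(p-2)/2}R^{(p-2)/2}\cdot 2R(t-s)^3 / (\text{const})$, and collect powers to match $2^{p/2}R^{p/2}(t-s)^{2+p/2}$. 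For $p\in(4,\infty)$ I would instead use $\varphi_{t,R}^p \leq R^{p-2}(t-r)^{p-2}\varphi_{t,R}^2$ more crudely, or directly $\varphi_{t,R}^p \leq (t-r)^{p-1}\cdot\varphi_{t,R}$ combined with $\int_\R \varphi_{t,R}\,dy = 2R(t-r)$ and a further $R$-factor, arranging constants to land on $2^{p-1}(t-s)^pR^2$. The entire lemma is elementary once the geometric interpretation of $\varphi_{t,R}$ as a clipped interval-overlap length is fixed; the bookkeeping of constants and the correct power of $R$ is where care is required, but no deep idea is needed.
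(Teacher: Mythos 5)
Parts (i) and (ii) of your proposal coincide with the paper's own argument. For part (iii) your route is genuinely different and more elementary: the paper expands $\varphi_{t,R}^2$ and $\varphi_{t,R}^4$ as multiple integrals over $[-R,R]^k$, applies the chained triangle inequality $G_{t-r}(x_1-y)G_{t-r}(x_2-y)\leq G_{2(t-r)}(x_1-x_2)G_{t-r}(x_2-y)$ and integrates variable by variable, and then deduces \eqref{Rosen7z} by H\"older interpolation between the $L^2$ and $L^4$ (resp.\ integer-$p$) bounds. You instead work only with the two pointwise bounds $\varphi_{t,R}\leq t-r$ and $\varphi_{t,R}\leq R$ together with the mass identity $\int_\R\varphi_{t,R}(r,y)\,dy=2R(t-r)$. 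This is cleaner, yields sharper constants than \eqref{Rosen7c} (you get $\tfrac23 R(t-s)^3$ and $\tfrac12 R^2(t-s)^4$), and your treatment of $p\in[2,4]$ --- splitting the surplus power $p-2$ evenly between the two pointwise bounds --- is correct and matches the stated $2^{p/2}R^{p/2}(t-s)^{2+p/2}$.

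The case $p\in(4,\infty)$, however, contains a genuine gap: neither of your two proposed majorizations produces the stated bound $2^{p-1}(t-s)^pR^2$. The first, $\varphi_{t,R}^p\leq R^{p-2}(t-r)^{p-2}\varphi_{t,R}^2$, gives after integration a bound of order $R^{p-1}(t-s)^{p+1}$, whose power of $R$ is far too large (in particular $R^{p-1}\not\les R^{p/2}$ for $R\geq 1$, so this would also break the downstream application in \eqref{Rosen7b}). The second, $\varphi_{t,R}^p\leq (t-r)^{p-1}\varphi_{t,R}$, gives a bound of order $R(t-s)^{p+1}$, which is not dominated by $R^2(t-s)^p$ uniformly in $R$ and $t-s$; no ``arrangement of constants'' can repair a mismatch in homogeneity. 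The fix is immediate from your own ``one factor of each'' principle, applied asymmetrically: for $p>4$ write $\varphi_{t,R}^p\leq R\,(t-r)^{p-3}\varphi_{t,R}^2$ (valid since $p-3>0$) and use your bound $\int_\R\varphi_{t,R}^2(r,y)\,dy\leq 2R(t-r)^2$ to get
\begin{align*}
\int_s^t\int_\R \varphi_{t,R}^p(r,y)\,dy\,dr
\leq \int_s^t R(t-r)^{p-3}\cdot 2R(t-r)^2\,dr
= \frac{2}{p}\,R^2(t-s)^p
\leq 2^{p-1}R^2(t-s)^p,
\end{align*}
which completes the lemma.
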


\begin{proof} (i) is trivial. Let us prove (ii) now.

Let $t \geq s \geq 0$. Then,
\begin{align}
 \varphi_{t,R}(r,y)  - \varphi_{s,R}(r,y)
 = \frac{1}{2} \int_{-R}^R  \ind_{\{ s-r \leq |x-y | < t-r  \}} dx,
 \label{LG1}
\end{align}
which   implies that
$ \varphi_{t,R}(r,y)  - \varphi_{s,R}(r,y) \in  [0,t-s]$ for any $(r, y)\in\R_+\times\R$.

It is also  easy to see from   \eqref{LG1}
that for $0< r \leq s$

\noi
\begin{align*}
\int_\R   \big[ \varphi_{t, R}(r,y)  - \varphi_{s, R}(r,y) \big] dy
& = \frac{1}{2} \int_{-R}^R  \bigg( \int_\R  \ind_{\{ s-r \leq |x-y | < t-r  \}}  dy \bigg)dx \\
&= 2(t-s)R.
\end{align*}
That is, the equality \eqref{Rosen6c} is proved.

To prove the first  bound in part (iii),
  we write
\noi
\begin{align}
\begin{aligned}
& \int_s^t    \bigg( \int_{\R}  \varphi^2_{t, R}(r,y)   \, dy  \bigg) dr \\
&\quad
= \int_s^t  \bigg[  \int_{\R}  \bigg( \int_{-R}^R  \int_{-R}^R  G_{t-r}(x_1-y) G_{t-r}(x_2-y) dx_1dx_2 
\bigg) dy   \bigg] dr  \\
&\quad
\leq \int_s^t  \int_{-R}^R   \bigg[ \int_{-R}^R  G_{2t-2r}(x_1-x_2)
\bigg( \int_{\R}  G_{t-r}(x_2-y) dy \bigg)
 dx_1 \bigg] dx_2 dr   \\
&\quad
\leq \int_s^t 2(t-r)^2 \cdot 2R dr =  \frac{4}{3} R |t-s|^3,
\end{aligned}
\label{Rosen7d} 
\end{align}

\noi
where the second step in \eqref{Rosen7d}  follows from the triangle inequality
\[
\ind_{\{  |x_1- y| <t-r  \}}\cdot \ind_{\{  |x_2- y| <t-r  \}}
\leq \ind_{\{  |x_1- x_2| <2t-2r  \}}\cdot \ind_{\{  |x_2- y| <t-r  \}}.
\]
And similarly,

\noi
\begin{align}
\begin{aligned}
 \int_s^t  \bigg( \int_{\R}  \varphi^4_{t, R}(r,y)   \,  dy  \bigg) dr 
&= \int_s^t    \bigg[  \int_{\R} \bigg( \int_{[-R,R]^4} \prod_{j=1}^4 G_{t-r}(x_j-y)   d\pmb{x_4} \bigg)   dy  \bigg] dr \\
& 
\leq  \int_s^t  \int_{[-R, R]^2} G_{2t-2r}(x_1-x_2)   \bigg[ \int_{[-R,R]^2}  G_{2t-2r}(x_2-x_3)
 \\
&\quad 
\cdot  G_{2t-2r}(x_3-x_4) \bigg(  \int_{\R}  G_{t-r}(x_4-y) dy\bigg) dx_4 dx_3  \bigg] dx_2 dx_1  dr \\
& 
\leq  \int_s^t 2R^2 \cdot 4(t-r)^3 dr =
2 R^2 (t-s)^4.
\end{aligned}
\label{Rosen7e}
\end{align}

\noi

It remains to show the inequality \eqref{Rosen7z}.
The case $p\in[2,4]$ follows from the inequalities in \eqref{Rosen7c}
by interpolation (i.e. an application of H\"older's inequality).
For $p\geq 4$ an integer, one can repeat the steps in \eqref{Rosen7e}
to arrive at
\noi
\begin{align*}
\begin{aligned}
& \int_s^t  \int_{\R}  \varphi^p_{t, R}(r,y)   \, drdy
\leq  \int_s^t 2R^2 \cdot  [2(t-r)]^{p-2} (t-r) dr
\leq   2^{p-1}  (t-s)^p R^2,
\end{aligned}
\end{align*}

\noi
and therefore,
the general case  follows  by interpolation.
This concludes   the proof.
\qedhere

\end{proof}

Finally, we end this section with a consequence
of Rosenthal's inequality; see Theorem 2.1, Theorem 2.3, and Corollary 2.5 in
\cite{BN16}.

\begin{proposition} \label{Prop:Rosen}

Recall the definition of $G_t$ from \eqref{FSol}.
Then, the following statements hold.

\smallskip
\noi
{\rm (i)}
Let $\{\Phi(s,y)\}_{ (s, y) \in \R_+ \times \R}$ be a predictable process such that

\noi
\begin{align}
\bE\int_0^t \int_{\R}G_{t-s}^2(x-y)|\Phi(s,y)|^2 dyds<\infty.
\label{Rosen1}
\end{align}

\noi
 Suppose \eqref{mp} holds  for some finite $p\geq 2$. Then,

\noi
\begin{align}
\begin{aligned}
&\bE \bigg[   \Big| \int_0^t \int_{\R}G_{t-s}(x-y)\Phi(s,y)L(ds,dy) \Big|^p \bigg]  \\
& \qquad\quad
\leq C_{p}(t) \int_0^t \int_{\R}G_{t-s}^p(x-y)\bE \big[  |\Phi(s,y)|^p \big] dsdy,
\end{aligned}
\label{Rosen2}
\end{align}
where $C_{p}(t)=2^{p-1}B_p^p\big( m_2^{\frac p2} t^{p-2} + m_p \big)$
with  $B_p$   the constant in Rosenthal's inequality.

\smallskip
\noi
{\rm (ii)} Suppose $m_p<\infty$ for some finite $p\geq 2$. Recall  $F_R(t)$ from \eqref{FRT}.
Then, for any finite $T > 0$,
there is some constant  $A_T$ only depending on $T$
such that

\noi
\begin{align}
\| F_R(t) - F_R(s) \|^p_p \leq A_{T} \cdot R^{\frac{p}{2}}  |t-s|^p
\label{Rosen2a}
\end{align}

\noi
for any $t,s\in [0,T]$ and  for any $R\geq 1$.
\noi
In particular, it holds for any $R\geq 1$ that

\noi
\begin{align}
\sup_{t\leq T}\| F_R(t) \|^p_p \leq  A_{T} \cdot R^{\frac{p}{2}}  T^p .
\label{Rosen2b}
\end{align}

\end{proposition}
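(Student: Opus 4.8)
The plan is to obtain both parts from the Rosenthal-type moment inequality for stochastic integrals against the compensated Poisson measure developed in \cite{BN16}, which for an $\mathbb{F}$-predictable integrand $H$ takes the schematic form
\[
\bE\Big| \int_0^t\!\int_\R H(r,y)\, L(dr,dy) \Big|^p \les m_2^{p/2}\, \bE\Big(\int_0^t\!\int_\R H^2\Big)^{p/2} + m_p\, \bE\int_0^t\!\int_\R |H|^p,
\]
the implicit constant being of the form $2^{p-1}B_p^p$ with $B_p$ the Rosenthal constant. Part (i) is the specialization $H=G_{t-r}(x-y)\Phi(r,y)$, while part (ii) results from applying the same inequality to a stochastic-Fubini representation of $F_R$ and inserting the deterministic estimates of Lemma \ref{lem:G}.

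For part (i) I would write $M=\int_0^t\int_\R G_{t-r}(x-y)\Phi(r,y)\,L(dr,dy)$ and read off the two terms of the Rosenthal bound: the $m_p$-term is already $m_p\int_0^t\int_\R G_{t-r}^p\,\bE[|\Phi|^p]$, exactly the target, so the only genuine work is to recast the $m_2$-term $m_2^{p/2}\,\bE(\int_0^t\int_\R G_{t-r}^2\Phi^2)^{p/2}$ in the same $L^p$-form. Here I would exploit that $G_t=\tfrac12\ind_{\{|\cdot|<t\}}$ is a scaled indicator, so that \emph{pointwise} $G_{t-r}^2\Phi^2=(G_{t-r}^p|\Phi|^p)^{2/p}$; Hölder's inequality with exponents $p/2$ and $p/(p-2)$ (the case $p=2$ being immediate), applied over the backward light cone $\{0<r<t,\ |x-y|<t-r\}$ whose $(r,y)$-section has Lebesgue measure $t^2$, gives the pathwise bound $(\int_0^t\int_\R G_{t-r}^2\Phi^2)^{p/2}\le t^{p-2}\int_0^t\int_\R G_{t-r}^p|\Phi|^p$. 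Taking expectations and collecting the constant $2^{p-1}B_p^p$ reproduces $C_p(t)=2^{p-1}B_p^p(m_2^{p/2}t^{p-2}+m_p)$, which is \eqref{Rosen2}.

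For part (ii) I would first invoke the stochastic Fubini theorem (Lemma \ref{lem:Fubi}(ii))—whose $L^2$ hypothesis \eqref{Fubi2} is guaranteed by $m_2<\infty$—to rewrite $F_R(t)=\int_0^t\int_\R \varphi_{t,R}(r,y)\,u(r,y)\,L(dr,dy)$ with $\varphi_{t,R}$ as in \eqref{Rosen6b}. For $t>s$, and using the convention that $\varphi_{s,R}(r,\cdot)=0$ for $r>s$, I would split
\[
F_R(t)-F_R(s)=\int_0^s\!\int_\R[\varphi_{t,R}-\varphi_{s,R}]\,u\,L(dr,dy)+\int_s^t\!\int_\R \varphi_{t,R}\,u\,L(dr,dy),
\]
bound the two summands in $L^p(\Omega)$ separately by the inequality above (with $H=[\varphi_{t,R}-\varphi_{s,R}]u$ and $H=\varphi_{t,R}u$), and control the solution factor by $\bE[|u(r,y)|^p]\le K_p(T)^p$ from \eqref{KPT}, pulling $K_p(T)$ out of the $m_2$-term via Minkowski's integral inequality. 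Feeding in $0\le\varphi_{t,R}-\varphi_{s,R}\le t-s$ together with \eqref{Rosen6c} for the first summand and \eqref{Rosen7c}--\eqref{Rosen7z} for the second reduces every term to an explicit power of $R$, $(t-s)$ and $T$.

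The only real friction I anticipate is the final bookkeeping of making all these terms uniformly dominated by $R^{p/2}|t-s|^p$ for $s,t\in[0,T]$ and $R\ge1$. The $m_2$-contributions already carry the correct power $R^{p/2}$, whereas the $m_p$-contributions carry only $R$ (first summand) or $R^2$ (second summand, via the $p>4$ branch of \eqref{Rosen7z}); these are absorbed into $R^{p/2}$ precisely because $R\ge1$ and $p\ge2$. Likewise any surplus powers of $(t-s)$ beyond $(t-s)^p$ are bounded by a power of $T$, and the two branches $p\in[2,4]$ and $p>4$ of \eqref{Rosen7z} must both be tracked. This yields \eqref{Rosen2a}, and \eqref{Rosen2b} follows at once by taking $s=0$ and recalling $F_R(0)=0$.
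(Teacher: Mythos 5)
Your proposal is correct and follows essentially the same route as the paper: both parts rest on the Rosenthal-type inequality of \cite{BN16}, and your part (ii) uses the identical stochastic-Fubini representation via $\varphi_{t,R}$, the same splitting of $F_R(t)-F_R(s)$ into the two stochastic integrals, the uniform moment bound \eqref{KPT}, the deterministic estimates of Lemma \ref{lem:G}, and the same absorption of the lower powers of $R$ (namely $R$ and $R^2$) into $R^{p/2}$ using $R\ge 1$. The only cosmetic difference is in part (i): you convert the $m_2$-term by a pathwise H\"older inequality over the light cone (of measure $t^2$) before taking expectations, whereas the paper first applies Minkowski's integral inequality to bring the $L^{p/2}(\Omega)$-norm inside and then Jensen's inequality to the resulting deterministic integral --- both orderings produce the same factor $t^{p-2}$ and hence the same constant $C_p(t)=2^{p-1}B_p^p\big(m_2^{\frac p2}t^{p-2}+m_p\big)$.
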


\begin{proof}

Fix $t\in(0,\infty)$. We first prove the bound \eqref{Rosen2} in part (i).

 By Theorem 2.3 in  \cite{BN16}
and the condition \eqref{Rosen1},
the process $\{Y_r\}_{r\in[0,t]}$, given by
\begin{align*}
Y_r  & =  \int_0^r \int_{\R}G_{t-s}(x-y)\Phi(s,y)L(ds,dy)  \\
&= \int_0^r \int_{\R} \int_{\R_0} G_{t-s}(x-y)\Phi(s,y) z \wh{N}(ds,dy, dz), \,\, r\in[0,t],
\end{align*}

\noi
has a c\`adl\`ag (i.e. right continuous with left limits) modification,
which is  a
martingale with

\noi
\begin{align}
\begin{aligned}
\| Y_t\|^p_p &=   \Big\| \int_0^t \int_{\R}G_{t-s}(x-y)\Phi(s,y)L(ds,dy) \Big\|_p \\
&\leq B^p_p \bigg[
\Big\|  \int_0^t \int_{\R} \int_{\R_0} G^2_{t-s}(x-y)\Phi^2(s,y) |z|^2  dsdy\nu(dz)
\Big\|_{\frac{p}{2}}^{\frac12} \\
&\qquad\qquad\qquad
+ \bigg( \bE  \int_0^t \int_{\R} \int_{\R_0} G^p_{t-s}(x-y)|\Phi|^p(s,y) |z|^p ds dy\nu(dz) \bigg)^{\frac1p} \bigg]^p ,
\end{aligned}
\label{Rosen3}
\end{align}

\noi
where $B_p$ is the constant in the Rosenthal's inequality;
see Theorem 2.1 in \cite{BN16}.
Then, we deduce from \eqref{Rosen3},   $|a+b|^p \leq 2^{p-1} ( |a|^p + |b|^p)$,
and  Minkowski's inequality with  \eqref{m2} and \eqref{mp} that

 \noi
\begin{align}
\begin{aligned}
\| Y_t\|^p_p
&\leq 2^{p-1} B^p_p \bigg[  m_2^{\frac p 2}
 \bigg( \int_0^t \int_{\R}  G^2_{t-s}(x-y) \| \Phi(s,y)\|_p^2  dsdy  \bigg)^{\frac p2} \\
&\qquad\qquad\qquad
+   m_p \int_0^t \int_{\R}   G^p_{t-s}(x-y) \|  \Phi (s,y)\|_p^p   dsdy  \bigg].
\end{aligned}
\label{Rosen4}
\end{align}

 Note that $G_{t-s}(x-y) = 0$ for $|x-y| \geq t-s$ and
 \begin{align}
\int_0^t \int_\R \ind_{\{ | x-y| < t-s\}} dsdy = t^2.
 \label{Rosen5}
 \end{align}

 \noi
Thus, it follows from  Jensen's inequality with \eqref{Rosen5} that

\noi
\begin{align}
\begin{aligned}
& \bigg( \int_0^t \int_{\R}  G^2_{t-s}(x-y) \| \Phi(s,y)\|_p^2  dsdy  \bigg)^{\frac p2}  \\
&\quad \leq  ( t^2 )^{\frac{p}{2} - 1} \int_0^t \int_{\R}  G^p_{t-s}(x-y) \| \Phi(s,y)\|_p^p  dsdy.
\end{aligned}
 \label{Rosen6}
 \end{align}
 Hence, the desired inequality \eqref{Rosen2} in part (i) follows
 from \eqref{Rosen4} and \eqref{Rosen6}.

\medskip

Now we prove the difference estimate \eqref{Rosen2a} in part (ii).
Without losing any generality, we assume $0 \leq s < t \leq T$.
By Lemma \ref{lem:Fubi},
 we can rewrite $F_R(t)$ as
\begin{align*}
F_R(t) = \int_0^t \int_\R \int_{\R_0}    \varphi_{t, R}(r,y) u(r,y) z \wh{N}(dr, dy, dz)
\end{align*}

\noi
with $\varphi_{t,R}$ as in \eqref{Rosen6b}.
Note that we can write
\begin{align}
\begin{aligned}
F_R(t) - F_R(s)
& = \int_0^s \int_{\R\times\R_0}  \big[ \varphi_{t, R}(r,y)  - \varphi_{s, R}(r,y) \big]
u(r,y) z \wh{N}(dr, dy, dz) \\
&\qquad
+ \int_s^t \int_{\R\times\R_0}  \varphi_{t, R}(r,y)  u(r,y) z \wh{N}(dr, dy, dz)\\
&:= \mathbf{T}_1 +  \mathbf{T}_2.
\end{aligned}
\label{Rosen6ba}
\end{align}

\noi
As in \eqref{Rosen4},
we can deduce from Rosenthal's inequality
(Theorem 2.3 in \cite{BN16}), Minkowski inequality, \eqref{KPT},  the fact that $\varphi_{t,R}-\varphi_{s,R} \in [0,t-s]$, and \eqref{Rosen6c},
that

\noi
\begin{align}
\begin{aligned}
\| \mathbf{T}_1 \|_p^p
&\leq  2^{p-1} B_p^p \bigg[  m_2^{\frac p2 }  \bigg( \int_0^s \int_{\R}
\big| \varphi_{t, R}(r,y)  - \varphi_{s, R}(r,y) \big|^2 \| u(r,y) \|_p^2 \, drdy  \bigg)^{\frac{p}{2}} \\
&\qquad\qquad
+ m_p \int_0^s \int_{\R}
\big| \varphi_{t, R}(r,y)  - \varphi_{s, R}(r,y) \big|^p \| u(r,y) \|_p^p \, drdy \bigg] \\
&\leq  2^{p-1} B_p^p \big[ m_2^{\frac p2 }   \cdot (2t)^{\frac{p}{2}} K_p^p(t) (t-s)^p  R^{\frac{p}{2}}
+ m_p \cdot 2t K_p^p(t) (t-s)^p  R  \big] \\
&\les K_p^p(t)(t+t^{\frac{p}{2}}) (t-s)^p R^{\frac{p}{2}} \,\,\, \text{for $R\geq 1$}
\end{aligned}
\label{Rosen7a} 
\end{align}

\noi
and

\noi
\begin{align}
\| \mathbf{T}_2 \|_p^p
&\leq  2^{p-1}B_p^p \bigg[  m_2^{\frac p2 } \bigg( \int_s^t \int_{\R}
   \varphi^2_{t, R}(r,y)  \| u(r,y) \|_p^2 \, drdy  \bigg)^{\frac p2} 
+ m_p \int_s^t \int_{\R}
 \varphi^p_{t, R}(r,y)   \| u(r,y) \|_p^p\, drdy \bigg]  \notag \\
&\les K_p^p(t) \bigg( \int_s^t \int_{\R}
   \varphi^2_{t, R}(r,y)   \, drdy  \bigg)^{\frac{p}{2}}
  + K_p^p(t) \int_s^t \int_{\R}
 \varphi^p_{t, R}(r,y)   \, drdy. \label{Rosen7b} 
\end{align}

\noi
Therefore,  we can deduce from \eqref{Rosen6ba}, \eqref{Rosen7a},
and \eqref{Rosen7b} with    \eqref{Rosen7z}
that
\[
\big\| F_R(t) - F_R(s)\|_p^p
\les K_p^p(t)[ 1 + t + t^{\frac p2} ] R^{\frac p2} | t-s|^p
\]
for $R\geq 1$. This proves the bound \eqref{Rosen2a},
and thus the uniform bound \eqref{Rosen2b} by noting that
 $F_R(0)=0$.

Hence, the proof of Proposition \ref{Prop:Rosen} is completed.
\qedhere

\end{proof}

\section{Malliavin derivatives of the hyperbolic Anderson model} \label{SEC3}

In this section, we will establish $L^p(\Om)$-bounds for Malliavin derivatives
of hyperbolic Anderson model \eqref{wave}.
As an intermediate step,
we will first study the stochastic wave equation with delta initial velocity in
Subsection \ref{SUB31}.

\subsection{Stochastic wave equation with delta initial velocity} \label{SUB31}

In this subsection,   we study  the following stochastic wave  equation:

\noi
\begin{align}
\begin{cases}
\dt^2 v (t,x)
=\dx^2 v(t,x)+ v(t,x)\dot{L}(t,x), \quad t> r, \ x \in \R \\[1em]
v(r,\cdot) = 0, \quad
\dt v(r,\cdot) = z  \dl_{y} ,
\end{cases}
\label{wave_dl}
\end{align}

\noi
where $(r, y, z)\in\R_+\times\R\times\R_0$ is fixed
and $\dot{L}$ is the space-time L\'evy   noise as in \eqref{wave}.

 We say  that a predictable  process $v=v^{(r,y,z)}$ is
 a solution to the equation \eqref{wave_dl}
 provided that:
\begin{itemize}
\item[(i)]

   $v(r,x)=0$ for any $x \in \R$,

\item[(ii)]

for  any $t> r$ and $x \in \R$, the following  equation holds almost surely:

\noi
\begin{align}
v(t,x)=G_{t-r}(x-y)z + \int_r^t \int_{\R}G_{t-s}(x-y') v(s,y')L(ds,dy'),
\label{wave_dl2}
\end{align}

\noi
where the stochastic integral in \eqref{wave_dl2} is interpreted  in It\^o sense and
coincides with the Kabanov-Skorohod integral $\dl( H)$ with
$H(s, y', z') =G_{t-s}(x-y') v(s,y') z'$.

\end{itemize}

As we will see shortly,
the solution $v^{(r,y,z)}$ is related to the Malliavin derivative $D_{r,y,z}u(t,x)$,
via relation \eqref{dec1}.

\begin{proposition} \label{prop:dl}
Fix $(r, y,z)\in\R_+\times\R\times\R_0$
and suppose $m_2<\infty$ as in \eqref{m2}. Then the following statements hold.

\smallskip
\noi
{\rm (i)} The equation \eqref{wave_dl} has a unique solution $v=v^{(r,y,z)}$.
Moreover, if $m_p<\infty$ for some $p\geq 2$ as in \eqref{mp},
we have for any $T>0$ that

\noi
\begin{align}
 \sup_{ r \leq t \leq T}\, \sup_{x,y \in \R}
\| v^{(r,y,z)}(t,x)\|_p \leq C_{T, p, \nu}|z|,
\label{wave_dl3}
\end{align}

\noi
where  
$C_{T,p, \nu}$ is a constant given in \eqref{def:CTPNU}.

\smallskip
\noi
{\rm (ii)} Let   $t>r$ and $x  \in \R$. Then,  $v^{(r,y,z)}(t,x)$ admits the following
chaos expansion  in $L^2(\Om)$:

\noi
\begin{align}
v^{(r,y,z)}(t,x)= G_{t-r}(x-y)z
+ \sum_{n \geq 1} I_n\big( G_{t,x,n+1} (r,y,z; \bul ) \big),
\label{wave_dl4}
\end{align}

\noi
 where\footnote{That is,
 $G_{t,x,k+1}(r, y, z; \bul)
 = F_{t,x,k+1}(\pmb{t_{k+1}},\pmb{x_{k+1}}, \pmb{z_{k+1}}) |_{(t_1, x_1, z_1) = (r,y,z)} $  with $F_{t,x,n}$ given by \eqref{KER:F}.
  In particular, $G_{t,x,1}(r,y,z) = F_{t,x,1}(r,y,z) = G_{t-r}(x-y)z$.
 \label{ft12b}}

 \noi
\begin{align}
\begin{aligned}
&G_{t,x,n+1}(r, y, z; \pmb{t_n},\pmb{x_n}, \pmb{z_n})  \\
& = G_{t-t_n}(x-x_n) G_{t_n-t_{n-1}}(x_n-x_{n-1})\cdots G_{t_2-t_1}(x_2-x_1)G_{t_1-r}(x_1-y) z \prod_{j=1}^n z_j .
\end{aligned}
\label{KER:F2}
\end{align}

\smallskip
\noi
{\rm (iii)}
For any $t > r$ and  $x\in \R$, we have

\noi
\begin{align}
G_{t-r}(x-y) v^{(r,y,z)}(t,x)=\tfrac{1}{2}v^{(r,y,z)}(t,x).
\label{wave_dl5}
\end{align}

\end{proposition}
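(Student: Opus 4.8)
Throughout write $v=v^{(r,y,z)}$, and note first that, since the mild equation \eqref{wave_dl2} is linear both in $v$ and in its source $G_{t-r}(x-y)z$, one has $v^{(r,y,z)}=z\,v^{(r,y,1)}$; it therefore suffices to prove the moment bound for $z=1$ and reinstate $|z|$ at the end. \emph{For part (i)} I would construct the solution by Picard iteration: set $v_0(t,x)=G_{t-r}(x-y)z$ and
\[
v_{n+1}(t,x)=G_{t-r}(x-y)z+\int_r^t\int_\R G_{t-s}(x-y')\,v_n(s,y')\,L(ds,dy').
\]
The crucial simplification is that $G$ is a multiple of an indicator, so by \eqref{FSol} one has $G_\tau^2=\tfrac12 G_\tau$ and, more generally, $G_\tau^p=2^{1-p}G_\tau$. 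Using the It\^o isometry for the second moment (which needs only $m_2<\infty$) together with $\int_\R G_{t-s}(x-y')\,dy'=t-s$ from \eqref{factsG}, the quantity $A_n(t):=\sup_{x\in\R}\bE[v_n(t,x)^2]$ obeys a linear Volterra inequality $A_{n+1}(t)\le \tfrac{z^2}{2}+m_2\int_r^t(t-s)A_n(s)\,ds$ on $[r,T]$, whose resolvent kernel is of $\cosh$-type; iterating gives $\sup_n\sup_{r\le t\le T}A_n(t)\le \tfrac{z^2}{2}\cosh(\sqrt{m_2}\,(T-r))<\infty$ (the same $\cosh$ structure that reappears in \eqref{COV_S}). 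The identical estimate applied to $v_{n+1}-v_n$ shows $\{v_n\}$ is Cauchy in $L^2(\Om)$ uniformly in $(t,x)$, producing a solution $v$ of \eqref{wave_dl2}, and applied to the difference of two solutions it gives uniqueness. For the $L^p$ bound \eqref{wave_dl3} with $m_p<\infty$ I would run the same Gr\"onwall scheme on $\sup_x\|v_n(t,x)\|_p^p$, replacing the isometry by Rosenthal's inequality \eqref{Rosen2}: since $G^p=2^{1-p}G$ cancels the $2^{p-1}$ produced by $|a+b|^p\le 2^{p-1}(|a|^p+|b|^p)$, one again reaches a Volterra inequality with kernel $C_p(T)(t-s)$, whence the explicit constant $C_{T,p,\nu}$ of \eqref{def:CTPNU}.

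\emph{For part (ii)} I would unroll the Picard scheme. Each $v_n$ is a finite sum of multiple integrals: by Lemma \ref{lem:CE2}(iv) the stochastic integral against $L$ is the Kabanov--Skorohod integral, which by \eqref{CD9c} raises the chaotic order by one and appends the new integration point $(s,y',z')$ to the kernel. Tracking this recursion, starting from the deterministic $v_0=G_{t-r}(x-y)z$, reproduces exactly the kernel \eqref{KER:F2} at each order, so the $L^2$-limit $v(t,x)$ admits the expansion \eqref{wave_dl4}; here it is convenient to record the bookkeeping identity $G_{t,x,n+1}(r,y,z;\cdot)=F_{t,x,n+1}|_{(t_1,x_1,z_1)=(r,y,z)}$, which ties these kernels to those of $u$. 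Convergence of the series in $L^2(\Om)$ is guaranteed by part (i) with $p=2$, which is precisely $\sum_{n\ge1}n!\,\|\wt G_{t,x,n+1}(r,y,z;\cdot)\|_{\fH^{\otimes n}}^2=\Var(v(t,x))<\infty$, and uniqueness from (i) identifies the series with $v$.

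\emph{For part (iii)} the identity follows softly from (ii). Because $G$ is an indicator, the kernel \eqref{KER:F2} is nonzero only where all of $G_{t-t_n}(x-x_n),\,G_{t_n-t_{n-1}}(x_n-x_{n-1}),\ldots,G_{t_1-r}(x_1-y)$ are nonzero, i.e. $|x-x_n|<t-t_n,\ \ldots,\ |x_1-y|<t_1-r$; summing these by the triangle inequality forces $|x-y|<t-r$. Hence for fixed $(t,x,r,y)$ with $|x-y|\ge t-r$ every kernel $G_{t,x,n+1}(r,y,z;\cdot)$ (and the $n=0$ term $G_{t-r}(x-y)z$) vanishes identically, while for $|x-y|<t-r$ the deterministic factor $\ind_{\{|x-y|<t-r\}}$ equals $1$; in either case $\ind_{\{|x-y|<t-r\}}\,G_{t,x,n+1}(r,y,z;\cdot)=G_{t,x,n+1}(r,y,z;\cdot)$ for every $n$. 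Since this indicator is deterministic it commutes with each $I_n$, so multiplying \eqref{wave_dl4} by it leaves $v(t,x)$ unchanged, giving $\ind_{\{|x-y|<t-r\}}\,v(t,x)=v(t,x)$ almost surely; recalling $G_{t-r}(x-y)=\tfrac12\ind_{\{|x-y|<t-r\}}$ from \eqref{FSol} yields \eqref{wave_dl5}. The main obstacle is the analytic bookkeeping in (i)--(ii)—running the Picard/Rosenthal estimate uniformly in $x,y$ and verifying that the iterated It\^o integrals are genuine multiple integrals with $L^2$-summable symmetrized kernels—whereas (iii) is the soft part, resting entirely on the finite-propagation-speed computation above, which in turn hinges on $G$ being an indicator (cf. Remark \ref{rem:pam}).
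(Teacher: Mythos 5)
Your proposal is correct and follows essentially the same route as the paper: Picard iteration with Rosenthal's inequality \eqref{Rosen2} and a Gr\"onwall/Volterra iteration exploiting $G^p=2^{1-p}G$ for part (i), an induction unrolling the Picard iterates' chaos expansions via the Kabanov--Skorohod integral (Lemma \ref{lem:dl}, \eqref{CD9c}) for part (ii), and the finite-propagation-speed support argument (triangle inequality forcing $|x-y|<t-r$ on every kernel \eqref{KER:F2}) for part (iii). Your $p=2$ detour through the It\^o isometry with the $\cosh$-type resolvent, and the reduction $v^{(r,y,z)}=z\,v^{(r,y,1)}$, are harmless cosmetic variants of the paper's uniform Rosenthal treatment (your constants in the Volterra inequality are slightly loose but still valid upper bounds).
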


 Note that the equality \eqref{wave_dl5} holds only in the 
one-dimensional setting, where the fundamental wave solution takes the 
specific form \eqref{FSol}.

\begin{proof}[Proof of Proposition \ref{prop:dl}] (i)  Throughout this proof, we fix $T>0$ and omit the
fixed superscripts
$r, y, z$.

Consider the sequence $\{v_n\}_{n\geq 0}$ of Picard iterations defined as follows:
\begin{itemize}
\item we set $v_n(r,x)=0$ for any $x \in \R$ and $n\in\N_{\geq 0}$;
\item  for $t>r$,    we let $v_0(t,x)=G_{t-r}(x-y)z$  and

\noi
\begin{align}
v_{n+1}(t,x) = G_{t-r}(x-y) z + \int_r^t \int_{\R}G_{t-s}(x-y')v_n(s,y') L(ds,dy')
\label{wave_dl6}
\end{align}
for any $n\in\N_{\geq 0}$.
\end{itemize}

Defining $v_{-1}(t,x)=0$, we see that
\[
v_{n+1}(t,x)-v_n(t,x)=\int_r^t \int_{\R}G_{t-s}(x-y') \big[ v_n(s,y')-v_{n-1}(s,y')\big] L(ds,dy').
\]
 for any $n\in\N_{\geq  0}$,  $t\geq r$,  and $x \in \R$.
 Then, we can deduce from  Proposition \ref{Prop:Rosen} with \eqref{FSol}
 and \eqref{factsG} that

 \noi
\begin{align}
\begin{aligned}
&\bE\big[  |v_{n+1}(t,x)-v_n(t,x)|^p \big]  \\
&\quad
\leq C_p(t) \int_r^t \int_{\R}G_{t-s}^p(x-y') \bE \big[ |v_n(s,y')-v_{n-1}(s,y')|^p \big] ds dy'  \\
&\quad
\leq C_p(t) 2^{1-p}t \int_r^t  \bigg(\sup_{y' \in \R} \bE\big[ |v_n(s,y')-v_{n-1}(s,y')|^p\big] \bigg) ds ,
\end{aligned}
\label{wave_dl7}
\end{align}

\noi
where  $C_{p}(t)=2^{p-1}B_p^p\big( m_2^{\frac p2} t^{p-2} + m_p \big)$
with  $B_p$   the constant in Rosenthal's inequality.
Letting $H_n(t):= \sup\big\{ \bE\big[ |v_n(t,x)-v_{n-1}(t,x)|^p\big]: x\in\R\big\}$, 
we obtain from \eqref{wave_dl7} that
\begin{align}
H_{n+1}(t)\leq C_p(T) T 2^{1-p} \int_r^t H_n(s)  ds  \,\,\, \mbox{for all $t \in [r,T]$.}
\label{wave_dl8}
\end{align}
Note that
\begin{align}
M:=\sup_{t \in [r,T]}H_0(t)=\sup_{t \in [r,T]}\sup_{x\in \R}G_{t-r}^p(x-y) |z|^p = 2^{-p}|z|^p.
\label{wave_dl9}
\end{align}
Therefore, iterating \eqref{wave_dl8} with \eqref{wave_dl9} yields

\noi
\begin{align}
H_{n+1}(t) 
\leq \frac{ \big( C_p(T) T 2^{1-p}  \big)^{n+1} t^{n+1}  }{(n+1)!} M
\leq \frac{ \big( C_p(T) T^2 2^{1-p}  \big)^{n+1}   }{(n+1)!} 2^{-p} |z|^p
\,\,\,\text{for $t\in[r, T]$},
\notag
\end{align}

\noi
and thus, 
we get with $C_{p}(t)=2^{p-1}B_p^p\big( m_2^{\frac p2} t^{p-2} + m_p \big)$,
\begin{align}
\sum_{n\geq 0} \sup_{(t,x)\in[r, T]\times\R}  \|  v_n(t,x)-v_{n-1}(t,x) \|_p 
\leq C_{T,p, \nu} |z|,
\notag
\end{align}

\noi
where $C_{T,p, \nu} $ is a constant defined by 

\noi
\begin{align}\label{def:CTPNU}
 C_{T,p, \nu}  : = 2^{-p} \exp\big[  B_p^p( m_2^{\frac{p}{2}} T^p + m_p T^2 )   \big]
\end{align}
with  $B_p$   the constant in Rosenthal's inequality.
This proves that $\{v_n(t,x)\}_{n\geq 1}$ is  Cauchy   in $L^p(\Om)$,
uniformly in $(t,x) \in [r,T] \times \R$.
Its limit $v$ is the unique solution to \eqref{wave_dl} with

\noi
\begin{align}
\sup_{(t,x)\in [r,T]\times \R}\|v(t,x)\|_p \leq C_{T,p, \nu}      |z|.
\label{wave_dl9b}
\end{align}
The case $p=2$ is exactly the first part of (i). And for the other part with  $p\geq 2$,
the uniform bound \eqref{wave_dl3} is exactly \eqref{wave_dl9b},
since the bound in \eqref{wave_dl9b} does not depend on $r$ or  $y$ .

\medskip

(ii)  From part (i), we know that $v(t,x)$ is the $L^2(\Om)$-limit of $v_{n+1}(t,x)$
as $n\to\infty$. We will show that $v_{n+1}(t,x)$ lives in finitely many chaoses
with some explicit expression for each $n$, and then
the chaos expansion \eqref{wave_dl4} for $v(t,x)$ follows by
sending $n$ to infinity.

Recall $v_0(t,x) = G_{t-r}(x-y) z$ and

\noi
\begin{align}
v_{n+1}(t,x) = G_{t-r}(x-y) z + \dl(V_{t,x,n}),
\label{wave_dl6b}
\end{align}

\noi
where
\begin{align}
V_{t,x,n}(s, y', z') := \ind_{(r,t)}(s)G_{t-s}(x-y') v_n(s, y')z' .
\label{def_Vn}
\end{align}

\noi
In what follows, we first show that for each $n\in\mathbb{Z}_{\geq -1}$,
$v_{n+1}(t,x)$ admits the following chaos expansion
\begin{align}
v_{n+1}(t,x) = G_{t-r}(x-y)z + \sum_{k=1}^{n+1} I_k( G_{t,x,k+1}(r, y, z; \bul ) ),
\label{claimV}
\end{align}

\noi
where
$G_{t,x,k+1}(r, y, z; \bul)$ is as in \eqref{KER:F2}.
To prove \eqref{claimV}, we proceed with mathematical induction.
The base case where $n=-1$ is trivial.
And for the case where $n=0$, we deduce from \eqref{wave_dl6} and the base case
 that
\begin{align*}
v_1(t,x) &=  G_{t-r}(x-y)z + \int_r^t \int_\R \int_{\R_0} G_{t-s}(x-y')   z G_{s-r}(y'-y) z'
\wh{N}(ds, dy', dz') \\
& =  G_{t-r}(x-y)z + I_1(G_{t, x, 2}(r, y, z; \bul)).
\end{align*}
That is, the claim \eqref{claimV} also holds for $n=0$.
Now assume \eqref{claimV}
holds for $n = m$ with $m\geq 0$.
Then, we write by using \eqref{wave_dl6b} with \eqref{def_Vn},
and the induction hypothesis
that

\noi
\begin{align}
\begin{aligned}
v_{m+2}(t,x)
&= G_{t-r}(x-y)z + \dl\big( V_{t,x,m+1} \big)
\end{aligned}
\notag
\end{align}
with

\noi
\begin{align}
\begin{aligned}
V_{t,x,m+1}(s, y', z')
&=   \ind_{(r,t)}(s)G_{t-s}(x-y') z' v_{m+1}(s, y') \\
&= \ind_{(r,t)}(s)G_{t-s}(x-y') z'
\Big[ G_{s-r}(y'-y)z + \sum_{k=1}^{m+1} I_k\big( G_{s,y' ,k+1}(r, y, z; \bul ) \big) \Big] \\
&= \ind_{\{r < s < t \}} G_{t,x, 2}(r, y, z; s, y', z') \\
&\qquad + \sum_{k=1}^{m+1}
I_k\big(      \ind_{(r,t)}(s)G_{t-s}(x-y') z'  \wt{G}_{s,y' ,k+1}(r, y, z; \bul ) \big),
\end{aligned}
\label{wave_dl10}
\end{align}

\noi
where $\wt{G}_{s,y' ,k+1}(r, y, z; \bul ) $ denotes the symmetrization of
the function
$G_{s,y' ,k+1}(r, y, z; \bul )$.
Note that the kernel of the $k$-th multiple integral in \eqref{wave_dl10}
can be rewritten as follows:

\noi
\begin{align}
\begin{aligned}
 &  \ind_{(r,t)}(s)G_{t-s}(x-y') z'  \wt{G}_{s,y' ,k+1}(r, y, z;  \pmb{t_k},\pmb{y_k}, \pmb{z_k}) \\
 &\quad
 = G_{t-s}(x-y') z' \frac{1}{k!} \sum_{\s\in\mathfrak{S}_k}
 G_{s- t_{\s(k)}}(y' - y_{\s(k)} ) z_{\s(k)}  \\
 &\qquad\quad\cdot G_{t_{\s(k)} -t_{\s(k-1)}  }( y_{\s(k)}-y_{\s(k-1)} ) z_{\s(k-1)} \cdots   G_{t_{\s(1)} - r }( y_{\s(1)} - y ) z \\
 &= \frac{1}{k!} \sum_{\pi\in\mathfrak{S}_{k}}
       G_{t- t_{\pi(k+1)}}(x - y_{\pi(k+1)} ) z_{\pi(k+1)}  \\
 &\qquad\quad\cdot G_{t_{\pi(k+1)} -t_{\pi(k)}  }( y_{\pi(k+1)}-y_{\pi(k)} ) z_{\pi(k)} \cdots   G_{t_{\pi(1)} - r }( y_{\pi(1)} - y ) z
\end{aligned}
\label{wave_dl11}
\end{align}

\noi
with  $(t_{\pi(k+1)}, y_{\pi(k+1)}, z_{\pi(k+1)} ) = (s, y', z')$
and the convention \eqref{convention}, where we point out that
the second sum in \eqref{wave_dl11} can be viewed
as a sum running over all permutations $\pi\in\mathfrak{S}_{k+1}$
such that $ t_{\pi(k+1)}=s$ is the biggest time  among all
$\{ t_{\pi(j)}: j=1, ..., k+1\}$. Therefore, the symmetrization
of  the function  \eqref{wave_dl11}

\noi
\begin{align*}
&(s, y', z', \pmb{t_k},\pmb{y_k}, \pmb{z_k} )
\equiv  ( \pmb{t_{k+1} },  \pmb{y_{k+1}}, \pmb{z_{k+1}} )  \\
&\qquad\qquad
\longmapsto
 \ind_{(r,t)}(s)G_{t-s}(x-y') z'  \wt{G}_{s,y' ,k+1}(r, y, z;  \pmb{t_k},\pmb{y_k}, \pmb{z_k})
\end{align*}
coincides with $\wt{G}_{t,x, k+2}(r,y,z; \pmb{t_{k+1} },  \pmb{y_{k+1}}, \pmb{z_{k+1}}   )$.
As a consequence, we deduce from \eqref{wave_dl10} and
Lemma \ref{lem:dl} that
$V_{t, x, m+1}\in\dom(\dl)$
with
\[
 \dl\big( V_{t,x,m+1} \big) = \sum_{k=1}^{m+2} I_k( \wt{G}_{t,x, k+1}(r,y,z; \bul  ) \big)
 =\sum_{k=1}^{m+2} I_k( G_{t,x, k+1}(r,y,z; \bul  ) \big).
\]
Hence, we just proved that the claim \eqref{claimV} holds for
$n=m+1$, and thus for all $n$.
Then, the proof of part (ii) is concluded by
sending $n$ to infinity.

 \medskip

(iii)  Finally, we prove the equality \eqref{wave_dl5}.
Recall from \eqref{FSol} that $G_{t-r}(x-y) = \frac{1}{2} \ind_{\{ | x-y| < t-r \}}$.
Then, it suffices to show that

\noi
\begin{align}
v^{(r,y,z)}(t,x) = 0 \,\,\, \text{when $|x-y| \geq t-r$.}
\label{suff1}
\end{align}
Indeed, by triangle inequality and \eqref{KER:F2}, we know that
$G_{t,x, n+1}(r, y,z; \bul) = 0$ when  $|x-y| \geq t-r$,
which, together with  the chaos expansion in \eqref{wave_dl4}, implies \eqref{suff1}. 
Hence,  the proof of Proposition \ref{prop:dl} is completed.
\qedhere

\end{proof}

\subsection{Estimates of Malliavin derivatives}  \label{SUB32}

In this subsection, our goal is to derive the $L^p(\Om)$-bound
for the Malliavin derivatives of the solution to the hyperbolic
Anderson model \eqref{wave}.

From the chaos expansion \eqref{u1} with \eqref{KER:F},
we deduce that

\noi
\begin{align}
\label{chaos-D}
D_{r,y,z}u(t,x) = \sum_{n\geq 1}n I_{n-1}\big(\wt{F}_{t,x,n}(r, y, z, \bul)\big),
\end{align}
where   $\wt{F}_{t,x,n}(r,y,z, \pmb{t_{n-1}},\pmb{x_{n-1}},\pmb{z_{n-1}})$
is obtained by first symmetrizing the kernel $F_{t,x,n}$ and then
putting $(r,y,z)$ in any of the $n$ `arguments'.\footnote{Here we view $(r,y,z)\in Z$
as one argument.}
It is not difficult to see that

\noi
\begin{align}
\wt{F}_{t,x,n}(r,y,z, \bul )
=\frac{1}{n} \sum_{j=1}^{n} H_{t,x,n}^{(j)}(r,y,z; \bul ),
\notag 
\end{align}

\noi
where $H_{t,x,n}^{(j)}(r,y,z; \bul)$ is the symmetrization of the function
$F_{t,x,n}^{(j)}(r,y,z; \bul)$ given by

\noi
\begin{align}
\begin{aligned}
 F_{t,x,n}^{(j)}(r,y,z; \pmb{t_{n-1}, x_{n-1}},\pmb{z_{n-1}})
 &=  G_{t-t_{n-1}}(x-x_{n-1})z_{n-1}\ldots
G_{t_j-r}(x_j-y)z  \\
&\qquad \cdot G_{r-t_{j-1}}(y-x_{j-1})z_{j-1}\ldots G_{t_2-t_1}(x_2-x_1)z_1;
\end{aligned}
\notag 
\end{align}
that is, $F_{t,x,n}^{(j)}(r,y,z; \bul)$ is obtained from $F_{t,x,n}$ by
putting $(r,y,z)$ at the $j$-th argument.
And it follows immediately that

\noi
\begin{align}
F_{t,x,n}^{(j)}(r,y,z; \bul )=  F_{r,y,j-1} \otimes G_{t,x, n-j+1}(r,y,z; \bul )
\label{decomp1}
\end{align}
with $G_{t,x,n-j+1}(r,y,z; \bul )$ as in \eqref{KER:F2};
 see Footnote \ref{ft12b} and also \cite[page 784]{BNQSZ}.

With the above notations, we can write
\begin{align}
\begin{aligned}
D_{r,y, z} u(t,x)
& = \sum_{n\geq 1}  \sum_{j=1}^{n} I_{n-1}\big( F_{t,x,n}^{(j)}(r,y,z;\bullet)\big) \\
&= \sum_{n\geq 1}  \sum_{j=1}^{n}
I_{n-1}\big(   F_{r,y,j-1} \otimes G_{t,x,n-j+1}(r,y,z; \bul )        \big).
\end{aligned}
\label{D-chaos}
\end{align}

Similarly, we can obtain the following chaos expansion
for the second Malliavin derivative:
for $r_1 < r_2 \leq t$,

\noi
\begin{align}
\begin{aligned}
&D^2_{\pmb{r_2}, \pmb{y_2}, \pmb{z_2}} u(t, x)
\equiv D_{r_1, y_1, z_1} D_{r_2, y_2, z_2} u(t, x) \\
&\quad =  \sum_{n= 2}^\infty \sum_{1\leq i < j \leq n}
I_{n-2} \big(  F_{r_1, y_1, i-1} \otimes G_{r_2, y_2, j- i}(r_1, y_1,z_1; \bul)
\otimes G_{t,x, n-j+1}(r_2, y_2,z_2; \bul)   \big);
\end{aligned}
\label{D2-chaos}
\end{align}

\noi
while for $r_2 < r_1 \leq t$, we can get a similar equality  by noting that
$D^2_{\pmb{r_2}, \pmb{y_2}, \pmb{z_2}} u(t, x) $ is almost surely
symmetric in those two arguments $(r_1, y_1, z_1)$ and $(r_2, y_2,z_2)$.

\medskip

Now we are ready to state the main result in this subsection.

\begin{proposition} \label{prop:dec}
Suppose $m_2<\infty$ as in \eqref{m2}.
 Then, $u(t,x) \in \dom(D^2)$ \textup{(}i.e. twice Malliavin differentiable\textup{)}
 and
  the following statements hold.

\smallskip
\noi
{\rm (i)} Fix $(r, y,z)\in (0,t]\times\R\times\R_0$ and recall
the notation $v^{(r,y,z)}$ from Proposition \ref{prop:dl}.
Then,

\noi
\begin{align}
D_{r,y,z}u(t,x)=u(r,y)v^{(r,y,z)}(t,x) \,\,\, \text{almost surely.}
\label{dec1}
\end{align}

\smallskip
\noi
{\rm (ii)} Fix $(r_1,y_1,z_1), (r_2, y_2, z_2)\in \R_+\times\R\times\R_0$
with $r_1 < r_2\leq t$. Then,

\noi
\begin{align}
D_{r_1,y_1,z_1} D_{r_2, y_2, z_2} u(t,x)
=u(r_1, y_1) v^{(r_1,y_1,z_1)}(r_2, y_2) v^{(r_2, y_2,z_2)}(t,x).
\label{dec2}
\end{align}

\smallskip
\noi{\rm (iii)} Let $m_p<\infty$ for some finite $p\geq 2$ as in \eqref{mp}
and let $T \in(0, \infty)$.
Then, for any $0< r < t  \leq T$ and for any $(y,z)\in \R\times\R_0$,
we have

\noi
\begin{align}
\|D_{r,y,z}u(t,x)\|_p \leq C'_{T,p, \nu} G_{t-r}(x-y)|z|,
\label{D1est}
\end{align}

\noi
where $C'_{T,p, \nu}$ is given by  \eqref{CTPA}.
For any $(r_1,y_1,z_1)$, $(r_2, y_2, z_2)\in \R_+\times\R\times\R_0$,
we have

\noi
\begin{align}
\begin{aligned}
&\|D_{r_1,y_1,z_1 } D_{r_2, y_2,z_2} u(t,x) \|_p  \\
&\quad \leq C''_{T,p, \nu} |z_1z_2| \times
 \begin{cases}
G_{t-r_2}(x-y_2)G_{r_2-r_1}(y_2-y_1) & \mbox{if $r_1<r_2$} \\
G_{t-r_1}(x-y_1)G_{r_1-r_2}(y_1-y_2) & \mbox{if $r_2< r_1$,}
\end{cases}
\end{aligned}
\label{D2est}
\end{align}

\noi
where  $C''_{T,p, \nu}$  is given by  \eqref{CTPB}.

\end{proposition}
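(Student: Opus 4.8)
The plan is to establish the two factorization identities (i) and (ii) first, by matching chaos expansions through the product formula, and then to deduce the $L^p$ bounds in (iii) from those identities together with the support property \eqref{wave_dl5} and a Picard--Rosenthal argument carried out entirely at moment level $p$. I would begin by recording $u(t,x)\in\dom(D^2)$: using the explicit kernels \eqref{KER:F}, one evaluates $\|F_{t,x,n}\|_{\fH^{\otimes n}}^2$ by integrating out each $z_j$ (a factor $m_2$), each spatial layer (a factor $\tfrac12(t_{i+1}-t_i)$ from $\int_\R G^2$), and the ordered times (a Dirichlet integral), obtaining $n!\,\|\wt{F}_{t,x,n}\|_{\fH^{\otimes n}}^2\le n!\,\|F_{t,x,n}\|_{\fH^{\otimes n}}^2\lesssim (m_2 t^2/2)^n/n!$. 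This decays faster than geometrically, so $\sum_n n^2\,n!\,\|\wt{F}_{t,x,n}\|^2<\infty$, which is exactly the condition for $u(t,x)\in\dom(D^2)$ and validates the chaos expansions \eqref{D-chaos} and \eqref{D2-chaos}.

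For identity \eqref{dec1}, the key structural observation is that in the decomposition \eqref{decomp1} the kernel $F_{r,y,j-1}$ is supported on time configurations lying entirely in $(0,r)$, while $G_{t,x,n-j+1}(r,y,z;\bullet)$ is supported on times lying entirely in $(r,t)$, and this remains true after symmetrization. I would therefore compare $D_{r,y,z}u(t,x)$ with the product $u(r,y)\,v^{(r,y,z)}(t,x)$, whose factors have the chaos expansions \eqref{u1} and \eqref{wave_dl4}. For the truncated partial sums $u_M(r,y)=\sum_{m\le M}I_m(F_{r,y,m})$ and $v_N(t,x)=\sum_{n\le N}I_n(G_{t,x,n+1}(r,y,z;\bullet))$, each term is a finite-chaos (hence finite-moment) Poisson functional, so Proposition \ref{prop:prod2} applies; and because the two families of kernels have disjoint temporal supports, every modified contraction $\wt{f}\star_k^\ell\wt{g}$ with $k\ge1$ vanishes identically (for $\ell\ge1$ the integrand is supported on an empty set, and for $\ell=0$ the diagonalized function is identically zero). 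Hence $I_m(F_{r,y,m})I_n(G_{t,x,n+1})=I_{m+n}(F_{r,y,m}\otimes G_{t,x,n+1})$, and summing over $m,n$ and reindexing $k=m+n$ reproduces precisely \eqref{D-chaos}. Identity \eqref{dec2} follows the same way, now exploiting that the three factors $u(r_1,y_1)$, $v^{(r_1,y_1,z_1)}(r_2,y_2)$ and $v^{(r_2,y_2,z_2)}(t,x)$ have pairwise disjoint temporal supports $(0,r_1)$, $(r_1,r_2)$, $(r_2,t)$, so that an iterated application of the product formula collapses the triple product to the single tensor product matching \eqref{D2-chaos}.

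With the identities in hand I would prove the moment bounds in (iii) without ever invoking H\"older's inequality at a higher moment level, which is the whole point in this finite-variance L\'evy setting where hypercontractivity is unavailable. Fixing $(r,y,z)$, I would rerun the Picard scheme of Proposition \ref{prop:dl}(i) on the process $w_n(t,x):=u(r,y)\,v_n(t,x)$, where $v_n$ are the Picard iterates for $v^{(r,y,z)}$; since $u(r,y)$ is $\cF_r$-measurable it may be carried inside the stochastic integral (the integrand $u(r,y)v_n(s,w)$ being predictable for $s>r$), so $w_n$ satisfies the same recursion as $v_n$ but with source $u(r,y)G_{t-r}(x-y)z$. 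Rosenthal's inequality \eqref{Rosen2}, which operates at level $p$, then yields $\sup_{t\le T,x}\|u(r,y)v^{(r,y,z)}(t,x)\|_p\le C\,\|u(r,y)\|_p\,|z|\le C\,K_p(T)\,|z|$ via \eqref{KPT}, and the support property \eqref{wave_dl5} (equivalently, the vanishing of the kernels \eqref{KER:F2} off $\{|x-y|<t-r\}$) supplies the indicator $G_{t-r}(x-y)$, giving \eqref{D1est}. For \eqref{D2est} with $r_1<r_2$ I would regroup \eqref{dec2} as $D_{r_1,y_1,z_1}D_{r_2,y_2,z_2}u(t,x)=\bigl(D_{r_1,y_1,z_1}u(r_2,y_2)\bigr)\,v^{(r_2,y_2,z_2)}(t,x)$ using \eqref{dec1}, observe that the first factor is $\cF_{r_2}$-measurable with $L^p$-norm already controlled by \eqref{D1est}, and run the same Picard--Rosenthal argument from time $r_2$; the two indicators $G_{r_2-r_1}(y_2-y_1)$ and $G_{t-r_2}(x-y_2)$ arise from the two delta-velocity solutions.

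The main obstacle I anticipate is the rigorous passage to the limit in the product-formula step underpinning \eqref{dec1} and \eqref{dec2}: because finiteness of the fourth moment is \emph{not} assumed, the full product $u(r,y)\,v^{(r,y,z)}(t,x)$ need only lie in $L^1(\Omega)$ (by Cauchy--Schwarz), not $L^2(\Omega)$, so one must reconcile two different topologies---the truncated products $u_Mv_N$ converging in $L^1$ to $u(r,y)v^{(r,y,z)}(t,x)$, and the reorganized double chaos series converging in $L^2$ to $D_{r,y,z}u(t,x)$---and confirm the two limits coincide almost surely. Handling this carefully (for instance along the diagonal $M=N\to\infty$, using orthogonality to identify the $L^2$-limit and boundedness of the finite-chaos products to pass the $L^1$-limit) is the delicate point; everything else reduces to the disjoint-support bookkeeping and the moment-preserving Rosenthal estimates.
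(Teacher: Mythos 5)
Your proposal is correct, and parts (i)--(ii) coincide in substance with the paper's argument: both rest on the decomposition \eqref{decomp1}, the observation that $F_{r,y,j-1}$ and $G_{t,x,n-j+1}(r,y,z;\bullet)$ have disjoint temporal supports so that all modified contractions $\star_k^\ell$ with $k\geq 1$ vanish, and Proposition \ref{prop:prod2}; you merely run the identity in the opposite direction (assembling the product $u(r,y)v^{(r,y,z)}(t,x)$ into the chaos series \eqref{D-chaos}, rather than factoring \eqref{D-chaos} into the product). Your explicit treatment of the $L^1$-versus-$L^2$ limit identification is a point the paper passes over silently when it regroups the double series into a product of two series; your fix is sound, and is cleanest phrased as absolute convergence in $L^2$ of the double family $\{I_{m+k}(F_{r,y,m}\otimes G_{t,x,k+1})\}_{m,k}$, which follows from $(m+k)!\leq 2^{m+k}m!\,k!$ together with the factorial decay of the kernel norms you computed at the outset (note that terms of equal total order $m+k$ are \emph{not} orthogonal, so orthogonality alone does not identify the limit). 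Where you genuinely diverge is part (iii): the paper observes that the disjoint temporal supports make the factors $u(r,y)$, $v^{(r_1,y_1,z_1)}(r_2,y_2)$, $v^{(r_2,y_2,z_2)}(t,x)$ \emph{independent} (justified by approximating each multiple integral by multilinear polynomials in disjoint families of Poisson variables), whence the $L^p$ norm of the product factorizes exactly, $\|uv\|_p=\|u\|_p\|v\|_p$, and \eqref{D1est}--\eqref{D2est} follow at once from \eqref{KPT}, \eqref{wave_dl3}, and \eqref{wave_dl5}; you instead rerun the Picard--Rosenthal fixed-point scheme with the $\cF_r$-measurable prefactor absorbed into the source term. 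Your route is viable and avoids proving independence altogether, but it is longer: you must justify pulling the (unbounded, merely $L^p$) factor $u(r,y)$ inside the It\^o integral, which requires a truncation plus dominated-convergence step and an induction keeping $w_n=u(r,y)v_n$ in $L^2$ at every stage, and then identify the Picard limit with $u(r,y)v^{(r,y,z)}(t,x)$ via convergence in probability. The paper's independence argument buys brevity at the price of one delicate approximation footnote; your argument buys self-containedness within the martingale-measure framework at the price of redoing the fixed-point estimates. Both correctly avoid H\"older/hypercontractivity, which is the essential constraint in this finite-variance L\'evy setting.
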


\begin{remark}\label{rem:dec}\rm

(a)
Note that in part (iii), assumption \eqref{mp} is used to
guarantee the uniform $L^p(\Om)$-bounds of $v^{(r, y,z)}$,
which are further applied in steps
\eqref{CKTP} and \eqref{CKTP2}.
Indeed,  this assumption is reflected 
in the expression of the bounds \eqref{D1est} and \eqref{D2est}
via the constants $C'_{T,p,\nu}$ and $C''_{T,p,\nu}$;
the dependency on the jump intensity $\nu$ arises 
through the constants $m_2$ and $m_p$.

\smallskip
\noi
(b) The upper bounds in \eqref{D1est}-\eqref{D2est} are optimal in the sense
that we can get matched lower bound. More precisely, using the orthogonality relation
\eqref{int3c} and \eqref{chaos-D},
we can get
\begin{align*}
\| D_{r,y,z} u(t,x) \|_2 
&= \bigg(   | \wt{F}_{t,x,1}(r, y, z, \bul) |^2
+ \sum_{n\geq 2} n^2 (n-1)!  \wt{F}_{t,x,n}(r, y, z, \bul) \|_{\fH^{\otimes (n-1)}}^2 \bigg)^{\frac12} \\
&\geq \wt{F}_{t,x,1}(r, y, z, \bul) = G_{t-r}(x-y) z;
\end{align*}
and similarly, we can get (with the convention \eqref{convention} in mind)
 \[
 \| D^2_{\pmb{r_2,y_2,z_2}} u(t,x) \|_2 \geq \big[G_{t-t_1}(x-y_1) G_{t_1-t_2}(y_1-y_2)
 + G_{t-t_2}(x-y_2) G_{t_2-t_1}(y_2-y_1)   \big] \cdot |z_1z_2|.
 \]

\end{remark}

\begin{proof}[Proof of Proposition \ref{prop:dec}]

We first prove the decomposition \eqref{dec1} in part (i).
Recall the chaos expansion \eqref{D-chaos}. Note that
the kernels $F_{r, y, j-1}$ and $G_{t,x, n-j+1}(r, y, z; \bul)$ in
\eqref{D-chaos} and \eqref{decomp1} have disjoint temporal supports,
which implies immediately that

\noi
\begin{align}
\begin{aligned}
\wt{F}_{r, y, j-1} \star^0_k \wt{G}_{t,x, n-j+1}(r, y, z; \bul) & =0 \\
\wt{F}_{r, y, j-1} \star^1_k \wt{G}_{t,x, n-j+1}(r, y, z; \bul) & =0
\end{aligned}
\label{dec3}
\end{align}
for $1\leq k  \leq (j-1) \wedge (n-j)$, where $\wt{G}_{t,x, n-j+1}(r, y, z; \bul)$
denotes the symmetrization of $G_{t,x, n-j+1}(r, y, z; \bul)$ given by \eqref{KER:F2}.
Thus, we can deduce from  \eqref{D-chaos},
Proposition \ref{prop:prod2} with \eqref{dec3}, \eqref{u1}, and
\eqref{wave_dl4} in Proposition \ref{prop:dl}  that

\noi
\begin{align}
\begin{aligned}
D_{r,y, z} u(t,x)
&= \sum_{n\geq 1}  \sum_{j=1}^{n}
I_{j-1}(   F_{r,y,j-1} )  I_{n-j}\big( G_{t,x,n-j+1}(r,y,z; \bul )        \big) \\
&= \bigg( \sum_{j=1}^\infty I_{j-1}(   F_{r,y,j-1} )  \bigg)
\sum_{n\geq 0} I_{n}\big( G_{t,x,n+1}(r,y,z; \bul )        \big) \\
&= u(r, y) \cdot  v^{(r, y,z)}(t,x).
\end{aligned}
\notag
\end{align}
That is, the decomposition \eqref{dec1} holds. 
 Moreover,
due to the disjoint temporal supports of $F_{r, y, j}$ and $G_{t,x, n}(r, y, z; \bul)$,
we obtain that the random variables $u(r,y)$ and $v^{(r, y,z)}(t,x)$
are independent,\footnote{For each $j, n\in\mathbb{N}_{\geq 1}$, 
the multiple integrals $I_j(F_{r, y, j})$ and $I_n(G_{t,x, n+1} )$ can be approximated
in $L^2(\Omega)$ by $I_j(F^{(k)})$ and $I_n(G^{(k)})$ as $k\to\infty$,
where $F^{(k)}\in \cE_j$ and  $G^{(k)}\in\cE_n$ as in \eqref{int3a}-\eqref{int3b}.
As in \eqref{int3b}, $I_j(F^{(k)}) \in\R[ Y_i; i\in I ]$ and $I_n(G^{(k)})\in\mathbb{R}[ Y_j; j\in J]$ are multilinear polynomials
in centered, independent Poisson random variables $\{ Y_\ell: \ell\in I \cup J \}$, 
where due to disjoint temporal support of $F_{r, y, j}$ and $G_{t,x, n+1}(r, y, z; \bul)$,
the two families  $\{ Y_i : i\in I \}$ and $\{ Y_j:j  \in J \}$ of  centered Poisson random variables  
are independent; see Definition \ref{def:PRM}.
This implies the independence of $I_j(F^{(k)})$ and $I_n(G^{(k)})$
and thus the    independence of $I_j(F_{r, y, j})$ and $I_n(G_{t,x, n+1} )$ by passing
$k\to\infty$. Therefore, the desired independence of $u(r, y)$ and $v^{(r, y,z)}(t,x)$
follows immediately. }
 and thus, together with
the uniform bound \eqref{wave_dl3} and the equality \eqref{wave_dl5} in Proposition \ref{prop:dl},
we can further get  

\noi
\begin{align}
\begin{aligned}
\|D_{r,y, z} u(t,x) \|_p  &= \| u(r,y) \|_p  \|v^{(r, y,z)}(t,x)\|_p \\
&\leq    2  K_p(T) C_{T,p, \nu} G_{t-r}(x-y) |z| ,
\end{aligned}
\label{CKTP}
\end{align}

\noi
where $C_{T,p, \nu}$ and $K_p(T)$ are as in \eqref{def:CTPNU} and
\eqref{KPT} respectively. This proves the bound \eqref{D1est}
in part (iii) with 
\begin{align} \label{CTPA}
C'_{T,p,\nu} = 2  K_p(T) C_{T,p, \nu}.
\end{align}

\medskip

Next, we prove \eqref{dec2} in part (ii).  Similarly,
we can rewrite the chaos expansion \eqref{D2-chaos} with $r_1 < r_2$
as follows:

\noi
\begin{align}
\begin{aligned}
&D_{r_1, y_1, z_1} D_{r_2, y_2, z_2} u(t, x) \\
& =  \sum_{n= 2}^\infty \sum_{1\leq i < j \leq n}
I_{i-1} \big(  F_{r_1, y_1, i-1}   \big)
I_{j-i-1} \big(    G_{r_2, y_2, j- i}(r_1, y_1,z_1; \bul) \big)  \\
&\qquad\qquad\cdot
I_{n-j} \big(   G_{t,x, n-j+1}(r_2, y_2,z_2; \bul)   \big) \\
&= \bigg( \sum_{i\geq 1} I_{i-1} \big(  F_{r_1, y_1, i-1}   \big)   \bigg)
 \bigg( \sum_{j\geq 0} I_{j} \big(    G_{r_2, y_2, j+1}(r_1, y_1,z_1; \bul)\big)   \bigg) \\
 &\qquad\qquad \cdot \sum_{n\geq 0}I_{n} \big(   G_{t,x, n+1}(r_2, y_2,z_2; \bul)   \big) \\
 &= u(r_1, y_1) v^{(r_1, y_1, z_1)}(r_2, y_2) v^{(r_2, y_2, z_2)}(t, x),
\end{aligned}
\notag
\end{align}
which is exactly the decomposition \eqref{dec2} in part (ii).
And it is also clear that the random variables
$u(r_1, y_1)$,  $v^{(r_1, y_1, z_1)}(r_2, y_2)$,
and $v^{(r_2, y_2, z_2)}(t, x)$
are independent.
Therefore, we deduce from \eqref{KPT}, \eqref{wave_dl3}, 
and \eqref{wave_dl5} in Proposition \ref{prop:dl}
that

\noi
\begin{align}
\begin{aligned}
&\|D_{r_1, y_1, z_1} D_{r_2, y_2, z_2} u(t, x) \|_p \\
&\quad
=\|  u(r_1, y_1)\|_p  \| v^{(r_1, y_1, z_1)}(r_2, y_2) \|_p  \| v^{(r_2, y_2, z_2)}(t, x)\|_p \\
&\quad
\leq   4  K_p(T) C^2_{T,p, \nu} G_{r_2- r_1}(y_2-y_1) G_{t-r_2}(x-y_2)   |z_1 z_2| \cdot
\end{aligned}
\label{CKTP2}
\end{align}
This proves \eqref{D2est} with 
\begin{align}
C''_{T,p, \nu}   =   4   K_p(T) C^2_{T,p, \nu}
\label{CTPB}
\end{align}
when $r_1 < r_2$.
Note that when $r_1 > r_2$, the proof is identical and thus omitted.

Hence the proof of Proposition \ref{prop:dec} is completed.
\qedhere

\end{proof}

\section{Proof of main results}  \label{SEC4}

\subsection{Spatial ergodicity}

We first establish the following strict stationarity.

\begin{lemma} \label{lem:stat}
Let  $t>0$ and $(x_1,\ldots,x_k,y) \in \R^{k+1}$. Then,
\begin{align}
\big( u(t,x_1),\ldots,u(t,x_k) \big) \eqd \big( u(t,x_1+y),\ldots,u(t,x_k+y) \big).
\label{stat1}
\end{align}

\end{lemma}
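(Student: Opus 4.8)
The plan is to exploit the spatial translation invariance of the intensity measure $\fm = \Leb \times \nu$ and to transfer it, through the chaos expansion \eqref{u1}, into an invariance of the law of the solution. Define the spatial shift $\Theta_y \colon Z \to Z$ by $\Theta_y(s, w, z) = (s, w - y, z)$. Since the Lebesgue factor is translation invariant and the $z$-component is untouched, $\Theta_y$ is a measurable bijection with $\fm \circ \Theta_y^{-1} = \fm$; equivalently, $\Theta_y$ induces a unitary operator on $\fH = L^2(Z, \cZ, \fm)$ and hence on each $\fH^{\otimes n}$. First I would record the elementary kernel identity
\[
F_{t, x+y, n}(\pmb{t_n}, \pmb{x_n}, \pmb{z_n}) = F_{t, x, n}(\pmb{t_n}, \pmb{x_n} - y, \pmb{z_n}) = \big(F_{t,x,n} \circ \Theta_y^{\otimes n}\big)(\pmb{t_n}, \pmb{x_n}, \pmb{z_n}),
\]
where $\pmb{x_n}-y := (x_1 - y, \ldots, x_n - y)$. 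This follows directly from the explicit form \eqref{KER:F}: replacing $x$ by $x+y$ and simultaneously each $x_j$ by $x_j - y$ leaves every factor $G_{t_{j+1}-t_j}(x_{j+1}-x_j)$ unchanged and turns $G_{t-t_n}(x+y-x_n)$ into $G_{t-t_n}(x-(x_n-y))$.

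Next I would establish that for every $h \in \fH^{\otimes n}$,
\[
I_n\big(h \circ \Theta_y^{\otimes n}\big) = I_n^{\,N \circ \Theta_y^{-1}}(h),
\]
where the right-hand side denotes the $n$-th multiple integral built from the pushforward Poisson random measure $N \circ \Theta_y^{-1}$. For a product indicator $h = \ind_{A_1 \times \cdots \times A_n}$ with pairwise disjoint $A_j$ this is immediate, since $\wh{N}(\Theta_y^{-1} A_j) = N(\Theta_y^{-1}A_j) - \fm(\Theta_y^{-1}A_j) = (N\circ\Theta_y^{-1})(A_j) - \fm(A_j)$ by the $\fm$-invariance of $\Theta_y$. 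The general case follows by density of the simple functions $\cE_n$ together with the modified isometry \eqref{int3c}, which is preserved because $\Theta_y^{\otimes n}$ is unitary, so that $\| h \circ \Theta_y^{\otimes n} \|_{\fH^{\otimes n}} = \| h \|_{\fH^{\otimes n}}$; the same unitarity secures the $L^2(\PP)$-convergence of the transformed chaos series.

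Combining the two identities with the expansion \eqref{u1} gives, for each $i$,
\[
u(t, x_i + y) = \sum_{n \geq 0} I_n(F_{t, x_i + y, n}) = \sum_{n\geq 0} I_n^{\,N\circ\Theta_y^{-1}}(F_{t, x_i, n}) = \Psi_i\big(N \circ \Theta_y^{-1}\big),
\]
where $\Psi_i(\chi) := \sum_{n\geq 0} I_n^{\,\chi}(F_{t, x_i, n})$ defines a fixed $\scrN_\s$-measurable functional on $\bfN_\s$ with $u(t, x_i) = \Psi_i(N)$. Since $N \circ \Theta_y^{-1}$ is a Poisson random measure with the same intensity $\fm$ as $N$, Definition \ref{def:PRM} yields $N \circ \Theta_y^{-1} \eqd N$ as $(\bfN_\s, \scrN_\s)$-valued random elements. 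Applying the measurable map $\chi \mapsto (\Psi_1(\chi), \ldots, \Psi_k(\chi))$ to both sides then gives \eqref{stat1}, the strict stationarity underlying the shifts \eqref{shifty}.

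The main obstacle I anticipate is not conceptual but one of bookkeeping: making rigorous the passage $I_n(h \circ \Theta_y^{\otimes n}) = I_n^{\,N\circ\Theta_y^{-1}}(h)$ through the $L^2(\PP)$-construction of multiple integrals, and verifying that the functionals $\Psi_i$ are genuinely $\scrN_\s$-measurable, so that the equality in law of $N$ and $N \circ \Theta_y^{-1}$ transfers to the random vectors $(\Psi_i)_{i=1}^k$. Both points are handled by first working with the simple kernels of $\cE_n$, using \eqref{int3c} to pass to the $L^2$-limit, and observing that each truncated sum $\sum_{n \leq M} I_n(F_{t, x_i, n})$ is a measurable functional of finitely many values $\wh{N}(B)$; the unitarity of $\Theta_y^{\otimes n}$ guarantees that all relevant series converge in $L^2(\PP)$ and that the limits are again measurable, which closes the argument.
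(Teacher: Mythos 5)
Your proposal is correct and follows essentially the same route as the paper: both rest on the kernel identity $F_{t,x+y,n}(\pmb{t_n},\pmb{x_n},\pmb{z_n})=F_{t,x,n}(\pmb{t_n},\pmb{x_n}-y,\pmb{z_n})$, the introduction of the spatially shifted Poisson random measure (your $N\circ\Theta_y^{-1}$ is the paper's $N_y$), and the translation invariance of Lebesgue measure giving $N_y\eqd N$, from which the equality in law is pushed through the chaos expansion. The only cosmetic difference is that the paper reduces \eqref{stat1} to linear combinations (Cram\'er--Wold) and finite chaos truncations before transferring the law, whereas you transfer it directly through a vector of measurable functionals of the point process; both handle the limiting/measurability bookkeeping in the same standard way.
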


\begin{proof}  To show \eqref{stat1}, it suffices to prove
\begin{align}
 \sum_{i=1}^k c_i u(t,x_i) \eqd   \sum_{i=1}^k c_i u(t, x_i +y)
\label{stat2}
\end{align}
for any $(c_1, ... , c_k)\in\R^k$. By a limiting argument with \eqref{u1}-\eqref{KER:F},
we can reduce the verification of \eqref{stat2}
to showing
\begin{align}
 \sum_{i=1}^k c_i \sum_{n=1}^M I_n(F_{t,x_i,n})   \eqd  \sum_{i=1}^k c_i \sum_{n=1}^M I_n(F_{t, x_i+y,n})
\notag 
\end{align}
for any $(c_1, ... , c_k)\in\R^k$ and for any $M\in\N_{\geq 1}$.

Note that
\[
F_{t,x +y ,n}(\pmb{t_n},\pmb{x_n}, \pmb{z_n})
= F_{t,x,n}(\pmb{t_n},\pmb{x_n}-y, \pmb{z_n})
\]
with $\pmb{x_n}-y := (x_1 - y, x_2 - y, ..., x_n - y)$. This motivates us to
define a Poisson random measure $N_y$  on $Z$
by setting
\[
N_y(A\times B \times C) = N( A \times B_y \times C)
\quad{\rm with}\quad B_y:= \{ b-y: b\in B\}
\]
for every $(A, B, C)\in \cB(\R_+)\times \cB(\R) \times \cB(\R_0)$.
Then, it follows from the translational invariance of Lebesgue measure that
\begin{align}
N_y \eqd N.
\label{stat4}
\end{align}
Let $I^y_n$ denote the $n$-th multiple integral with respect to the compensated
version of   $N_y$; see Subsection \ref{SUB22}.
Therefore, we deduce from the definition of multiple integrals with
\eqref{int3a}, \eqref{int3b}, and \eqref{int4} that
\begin{align*}
 \sum_{i=1}^k c_i \sum_{n=1}^M I_n(F_{t, x_i+y,n})
& = \sum_{i=1}^k c_i \sum_{n=1}^M I^y_n(F_{t, x_i,n})  \\
&\eqd  \sum_{i=1}^k c_i \sum_{n=1}^M I_n(F_{t, x_i,n}),
\end{align*}

\noi
where the last step is a consequence of \eqref{stat4}.
Hence the proof of Lemma \ref{lem:stat} is completed now.
\qedhere

\end{proof}

The above Lemma \ref{lem:stat}
indicates that $\{u(t,x)\}_{x\in\R}$ is strictly stationary for every $t\in\R_+$.
The main goal of this subsection is to show the (spatial)
ergodicity of $\{u(t,x)\}_{x\in\R}$, and thus
answer the question \eqref{quest} affirmatively.
 To achieve this goal,
we exploit a criterion from \cite{CKNP21} (see Lemma \ref{lem:CKNP})
and take advantage of tools from Malliavin calculus on the Poisson space.
In particular, we need the  $L^2(\Om)$-bound  \eqref{D1est}
for the Malliavin derivatives of the solution $u(t,x)$.
Let us first recall the following variant of     \cite[Lemma 7.2]{CKNP21}.

\begin{lemma}\label{lem:CKNP}

 A strictly  stationary process $\{ Y(x)\}_{x\in\R^d}$
is ergodic provided that 
\begin{align}
\lim_{R\to\infty} \frac{1}{R^{2d}}
\Var\bigg(  \int_{[0,R]^d}  g \bigg(\sum_{j=1}^k b_j  Y(x + \ze_j)  \bigg) dx \bigg)
=0
\label{ERG1}
\end{align}

\noi
for all integers $k\geq 1$, for every $b_1, ... , b_k, \zeta_1, ... , \zeta_k\in\R^d$,
and for $g \in\{ x\mapsto \cos(x), x \mapsto \sin(x) \}$.

\end{lemma}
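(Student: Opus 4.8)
The plan is to view the strictly stationary field $\{Y(x)\}_{x\in\R^d}$ as a measure-preserving dynamical system and to reduce its ergodicity to the single quantitative hypothesis \eqref{ERG1}. I would work on the canonical probability space $(\Om',\cB,\PP)$ carrying the field, with $\cB=\s\{Y(x):x\in\R^d\}$, on which the shift group $\{\th_x\}_{x\in\R^d}$ of \eqref{shifty}, namely $\th_x(\{Y(\bul)\})=\{Y(\bul+x)\}$, acts by measure-preserving maps --- this is precisely the content of strict stationarity. Ergodicity is the triviality of the invariant $\s$-algebra $\mathcal{I}=\{A\in\cB:\ind_A\circ\th_x=\ind_A\text{ a.s. }\forall x\}$. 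Introducing the averaging operators
\begin{align}
A_RF:=\frac1{R^d}\int_{[0,R]^d}F\circ\th_x\,dx,\qquad F\in L^2(\PP),\notag
\end{align}
which are $L^2$-contractions (by Minkowski's integral inequality together with $\|F\circ\th_x\|_2=\|F\|_2$) satisfying $\bE[A_RF]=\bE[F]$, the whole argument rests on establishing
\begin{align}
\Var(A_RF)\longrightarrow0\quad(R\to\infty)\qquad\text{for every }F\in L^2(\PP).\label{plan:reduc}
\end{align}
From \eqref{plan:reduc} ergodicity follows elementarily: for $F$ invariant one has $A_RF=F$ for all $R$, whence $F=\bE[F]$ almost surely, and taking $F=\ind_A$ for $A\in\mathcal{I}$ forces $\PP(A)\in\{0,1\}$.

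Next I would reduce \eqref{plan:reduc} to a total family of observables. Since $A_R$ is a contraction with $\bE[A_RF]=\bE[F]$, the collection of $F$ for which $\Var(A_RF)\to0$ is a closed linear subspace of $L^2(\PP)$, so it is enough to check the convergence on a family whose linear span is dense. I would use the bounded exponential functionals
\begin{align}
F_{\pmb b,\pmb\ze}:=\exp\Big(i\sum_{j=1}^k b_jY(\ze_j)\Big),\notag
\end{align}
with $k\ge1$ and $b_j,\ze_j$ ranging as in \eqref{ERG1}. Their totality in $L^2(\PP)$ follows in two standard steps: bounded cylinder functions $g(Y(\ze_1),\dots,Y(\ze_k))$ are dense since $\cB=\s\{Y(x):x\in\R^d\}$ is generated by the coordinates; and for fixed $\ze_1,\dots,\ze_k$ the characters $(y_1,\dots,y_k)\mapsto\exp(i\sum_{j=1}^k b_jy_j)$ have dense linear span in $L^2$ of the law of $(Y(\ze_1),\dots,Y(\ze_k))$ on $\R^k$, by injectivity of the Fourier transform of finite measures.

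The hypothesis \eqref{ERG1} enters exactly when verifying $\Var(A_RF_{\pmb b,\pmb\ze})\to0$. Since $F_{\pmb b,\pmb\ze}\circ\th_x=\exp(i\sum_jb_jY(x+\ze_j))$, splitting into real and imaginary parts via $\bE|Z-\bE Z|^2=\Var(\mathrm{Re}\,Z)+\Var(\mathrm{Im}\,Z)$ gives
\begin{align}
\Var\big(A_RF_{\pmb b,\pmb\ze}\big)
&=\frac1{R^{2d}}\Var\Big(\int_{[0,R]^d}\cos\big(\textstyle\sum_jb_jY(x+\ze_j)\big)dx\Big)\notag\\
&\quad+\frac1{R^{2d}}\Var\Big(\int_{[0,R]^d}\sin\big(\textstyle\sum_jb_jY(x+\ze_j)\big)dx\Big),\notag
\end{align}
and both summands vanish as $R\to\infty$ by \eqref{ERG1} with $g=\cos$ and $g=\sin$. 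The convergence then passes to finite linear combinations by the triangle inequality in $L^2$ (the map $F\mapsto\Var(A_RF)^{1/2}=\|A_RF-\bE[F]\|_2$ is subadditive) and, by the density above, to all of $L^2(\PP)$, which establishes \eqref{plan:reduc} and hence the lemma.

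The part requiring the most care is not any computation but the measure-theoretic bookkeeping: ensuring a jointly measurable realization of $(x,\omega)\mapsto F(\th_x\omega)$ so that $A_RF$ is well defined (this is available here because $u(t,\bul)$ admits a measurable modification), and the totality of $\{F_{\pmb b,\pmb\ze}\}$ in $L^2(\PP)$. Both are routine but must be set up carefully; once they are in place, the translation of \eqref{ERG1} into the premise of \eqref{plan:reduc} is immediate, and no appeal to the full mean ergodic theorem is needed.
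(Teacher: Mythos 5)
Your proof is correct, but it follows a genuinely different route from the paper's. Both arguments start from the same observables (the complex exponentials $\exp(i\sum_j b_j Y(x+\ze_j))$, split into the cosine/sine parts covered by \eqref{ERG1}) and both use stationarity to identify the mean of the spatial average; the divergence is in how ergodicity is extracted. The paper invokes von Neumann's mean ergodic theorem to identify the $L^2$-limit of the averages as the conditional expectation $\bE[\exp(i\sum_j b_j Y(\ze_j))\,|\,\mathscr{I}]$ given the invariant $\s$-algebra $\mathscr{I}$; comparing with the constant limit forced by \eqref{ERG1}, it concludes that the conditional and unconditional characteristic functions of $(Y(\ze_1),\dots,Y(\ze_k))$ agree, hence that $\s\{Y\}$ is independent of $\mathscr{I}$, hence that $\mathscr{I}$ is independent of itself and therefore trivial. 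You avoid the ergodic theorem altogether: you observe that the averaging operators $A_R$ are $L^2$-contractions fixing expectations, so the set of $F$ with $\Var(A_R F)\to 0$ is a closed linear subspace; \eqref{ERG1} places the exponentials in it, their span is dense by Fourier injectivity of finite measures, so the variance decay holds for all $F\in L^2(\PP)$, and applying it to invariant indicators (which satisfy $A_R \ind_A=\ind_A$) gives triviality. In effect you re-prove, by an elementary density bootstrap, exactly the fragment of the mean ergodic theorem that is needed; this makes your argument more self-contained and in fact yields the stronger conclusion $A_R F \to \bE[F]$ in $L^2$ for every $F\in L^2(\PP)$, which also delivers the paper's law-of-large-numbers statement \eqref{LLN} directly. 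The paper's version is shorter once von Neumann's theorem is taken as a black box, and its ``$\mathscr{I}$ independent of itself'' step is a slick alternative to your fixed-point argument. Both routes rest on the same measure-theoretic groundwork (a jointly measurable version of the field so that the flow averages are defined and Fubini applies), which you correctly flag as the only point requiring care.
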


Lemma \ref{lem:CKNP} is essentially contained in (the proof of)
\cite[Lemma 7.2]{CKNP21}. But the statement of  \cite[Lemma 7.2]{CKNP21}
imposes more restrictive assumptions that are not useful in the current
Poisson setting, due to the lack of neat chain rule and enough moments. 
In fact, by directly applying  \cite[Lemma 7.2]{CKNP21}, 
 we can also obtain the spatial ergodicity
of $\{u(t,x): x\in\R\}$ (Theorem \ref{thm:main} (i))
but we have to  assume  ``$m_p <\infty$ for any finite $p\geq 2$''.

\medskip

In what follows, we present a proof of Lemma \ref{lem:CKNP}
for the sake of  completeness.

\begin{proof}[Proof of Lemma \ref{lem:CKNP}]

 This proof is essentially taken from   \cite[Lemma 7.2]{CKNP21}.
Since the condition \eqref{ERG1} holds for sine and cosine functions,
we can deduce from strict stationarity that

\noi
\begin{align}\label{ERG1a}
 \frac{1}{R^d} \int_{[0,R]^d} \exp \bigg( i\sum_{j=1}^k b_j  Y(x + \ze_j)  \bigg) dx
 \xrightarrow[R\to\infty]{\textup{in $L^2(\mathbb{P})$}}
 \bE \bigg[ \exp \bigg( i\sum_{j=1}^k b_j  Y( \ze_j)  \bigg) \bigg].
\end{align}
Let $\mathscr{I}$ denote the $\sigma$-algebra of invariant sets with respect to the
shifts $\{ \th_y: y\in\R\}$ in \eqref{shifty}. We argue as in the proof of
  \cite[Lemma 7.2]{CKNP21}: by invoking
von Neumann's mean ergodic  theorem (see, e.g.,  \cite[Chapter 2]{Peter89}),
we can get

\noi
\begin{align}\label{ERG1b}
 \frac{1}{R^d} \int_{[0,R]^d} \exp \bigg( i\sum_{j=1}^k b_j  Y(x + \ze_j)  \bigg) dx
 \xrightarrow[R\to\infty]{\textup{in $L^2(\mathbb{P})$}}
 \bE \bigg[ \exp \bigg( i\sum_{j=1}^k b_j  Y( \ze_j)  \bigg) \big|  \, \mathscr{I} \bigg].
\end{align}
Therefore, the right sides of \eqref{ERG1a} and \eqref{ERG1b} are equal for any $b_j, \zeta_j\in\R$.
This leads to the conclusion that
$\big(Y(\ze_1),\ldots, Y(\ze_k)\big)$ is independent of $\mathscr{I}$. 
Therefore, $\mathscr{I}$ is independent of the $\s$-algebra generated by $Y$, 
and in particular $\mathscr{I}$ is independent of itself. 
Hence $\mathscr{I}$ is the trivial $\sigma$-algebra.
This in turn completes our proof.
\qedhere

\end{proof}

\begin{proof}[Proof of Theorem \ref{thm:main}  \textup{(i)}]
By Lemma \ref{lem:stat}, $\{ u(t,x)\}_{x\in\R}$ is strictly stationary.
Then, we need to verify the condition \eqref{ERG1} in Lemma \ref{lem:CKNP}
to show the spatial ergodicity.

\smallskip

In what follows, we only consider the case where $g(x) = \cos(x)$, as the other case
can be proved verbatim. Let $k\in\N_{\geq1}$ and $b_1, ... ,b_k, \zeta_1, ... , \zeta_k\in\R$.
Recall from \eqref{FSol} that $G_t(x) = \tfrac{1}{2} \ind_{\{ |x| < t \}}$
and from Proposition \ref{prop:dec} (iii) that

\noi
\begin{align} \label{ERG1c}
\| D_{s, y, z} u(t, x) \| _2 \les  G_{t-s}(x-y) |z| \quad\textup{provided $m_2<\infty$.}
\end{align}

\noi
Therefore, we can deduce from
Poincar\'e inequality \eqref{Poi1}, Lemma \ref{lem:Fubi}, and Minkowski's inequality
with \eqref{add1b}
that

\noi
\begin{align}
\begin{aligned}
&\Var\bigg(  \int_{-R}^R  \cos \bigg( \sum_{j=1}^k b_j  u(t, x + \ze_j)  \bigg) dx \bigg) \\
&\quad
\leq \bE  \bigg[  \Big\| \int_{-R}^R D \cos \bigg( \sum_{j=1}^k b_j  u(t, x + \ze_j)  \bigg)  dx  
       \Big\|_{L^2(Z,\cZ, \fm)}^2  \bigg]\\
&\quad
= \int_{(0,t)\times\R\times\R_0}
\bigg\|  \int_{-R}^R D_{s, y, z} \cos \bigg( \sum_{j=1}^k b_j  u(t, x + \ze_j)  \bigg)  dx \bigg\|^2_2
dsdy \nu(dz) \\
&\quad
\leq
\int_{(0,t)\times\R\times\R_0}
  \bigg(  \int_{-R}^R  \Big\| \sum_{j=1}^k b_j  D_{s, y, z}     u(t, x + \ze_j)    \Big\|_2 dx
  \bigg)^2 dsdy \nu(dz),
\end{aligned}
\label{ERG2}
\end{align}

\noi
where the equality in \eqref{ERG2} follows essentially
from the fact that $D_{r, y, z}u(t,x)=0$ when $r\geq t$;
and this fact can be derived easily
from the explicit  chaos expansion \eqref{D-chaos}
(see also Lemma \ref{lem:CE2} (ii)).
Finally, in view of  the bound \eqref{ERG1c} and triangle inequality,
we can reduce the proof of \eqref{ERG1} to
showing for any $\ze\in\R$ that

\noi
\begin{align}
  \frac{1}{R^2} \int_{\R_+\times\R}
\bigg(  \int_{-R}^R  G_{t-s}(x+\ze -y) dx \bigg)^2 dsdy \to 0 \,\, \text{as $R\to\infty$.}
\label{ERG4}
\end{align}

It is clear that  with $\varphi_{t,R}$ as in \eqref{Rosen6b} and \eqref{Rosen7c},
\begin{align}
\begin{aligned}
\text{LHS of \eqref{ERG4}}
&=    \frac{1}{R^2} \int_0^t  \int_\R     \varphi^2_{t,R}(s, \ze -y) ds dy
 \ \to 0  \,\,\, \text{as $R\to\infty$.}
\end{aligned}
\notag
\end{align}

\noi
This proves \eqref{ERG4} and hence the spatial ergodicity of $\{u(t,x)\}_{x\in\R}$.
\qedhere
\end{proof}

\subsection{Central limit theorems}  \label{SUB42}

Recall from \eqref{FRT} the definition of the spatial integral
$F_R(t)$.  In view of Lemma \ref{lem:Fubi}, we can write
\begin{align}
F_R(t) =\int_{-R}^R \big[ u(t,x) -1 \big] dx
= \sum_{n=1}^\infty I_n\bigg( \int_{-R}^R F_{t,x, n} dx \bigg)
\notag 
\end{align}
with $F_{t,x,n }$ as in \eqref{KER:F}.

 \smallskip
 
This section is divided into three parts: in Part $\1$,
we establish the limiting covariance structure of the process
$\{F_R(t)\}_{t\in\R_+}$ stated in Theorem \ref{thm:main} (ii), and in particular the limiting
variance at fixed time $t > 0$ that will be used
in Part $\II$;
then  Part $\II$ is devoted to the proof of  Theorem \ref{thm:main} \rm (iii),
while we prove the functional CLT (Theorem \ref{thm:main} \rm (iv)) in Part $\III$.

\medskip
\noi
$\bul$ {\bf Part $\1$: Limiting covariance structure.}

\begin{proof}[Proof of Theorem \ref{thm:main} \rm (ii)]
In this part,   we only
assume  $m_2 < \infty$.
We begin with the covariance of $u(t,x)$ and $u(s,y)$:
\begin{align}
\begin{aligned}
\bE[ u(t,x) u(s,y) ] -1
& = \sum_{n\geq 1} n! \langle \wt{F}_{t,x,n}, \wt{F}_{s,y,n}\rangle_\fH \\
& = \sum_{n\geq 1} n! m_2^n \langle \wt{f}_{t,x,n},
\wt{f}_{s, y,n}\rangle_{L^2(\R_+\times\R)^{\otimes n}},
\end{aligned}
\label{COV1}
\end{align}

\noi
where $f_{t,x,n}$, given as in \cite[equations (1.7), (1.8)]{BNQSZ}, is determined
by
\begin{align}
F_{t,x,n}(\pmb{t_n}, \pmb{x_n},\pmb{z_n}) = f_{t,x, n}(\pmb{t_n}, \pmb{x_n})
\prod_{j=1}^n z_j .
\label{KER:f}
\end{align}

\noi
Observe that the RHS of \eqref{COV1} coincides with 
the covariance of $U(t,x)$ and $U(s,y)$, when 
$U$ is the unique mild solution to the following
stochastic wave equation with space-time Gaussian white noise $\dot{W}$ on $\R_+\times\R$:

\noi
\begin{align} \label{wave_U}
\begin{cases}
 \dt^2 U(t,x) = \dx^2 U(t,x) + \sqrt{m_2} \,U(t,x) \dot{W}(t,x), \quad (t,x)\in (0,\infty)\times\R \\[0.8em]
 U(0,x) = 1 \quad{\rm and}\quad \dt U(0,x) = 0, \quad x\in\R;
\end{cases}
\end{align}
see also \cite[(1.1)]{DNZ20} with $\s(x) = \sqrt{m_2} \,x$.
With 
\[
G_R(t) = \int_{-R}^R  [ U(t,x) - 1 ] dx,
\]
it is easy to see that 

\noi
\begin{align}
\begin{aligned}
\bE\big[ G_R(t)  G_R(s) \big] 
&=   \int_{-R}^R   \int_{-R}^R  {\rm Cov}\big(  U(t,x) , U(s, y) \big) dxdy \\
&=   \int_{-R}^R   \int_{-R}^R  {\rm Cov}\big(  u(t,x) , u(s, y) \big) dxdy 
=  \bE\big[ F_R(t)  F_R(s) \big]. 
\end{aligned}
\label{COV3}
\end{align}

\noi
That is, it suffices to find the limiting covariance structure of $\{G_R(t): t\in\R_+\}$ now. 
And it has been established in \cite{DNZ20} that 
\begin{align}\label{COV_DNZ1}
\frac{1}{R} \bE\big[ G_R(t)  G_R(s) \big] 
\xrightarrow{R\to+\infty} 
2 m_2	\int_0^{t\wedge s} (t-r) (s-r) \bE[ U^2(r,0) ] dr ;
\end{align}

\noi
see Proposition 3.1 (on page 3025)   
and 
 Remark 2 (on pages 3029-3030)  therein
 with particularly $\s(x) = \sqrt{m_2}\, x$.
Meanwhile, the second moment formula for $ \bE[ U^2(r,0) ] $ can be found in the literature:
\begin{align}
 \bE[ U^2(r,0) ]  = \cosh\bigg(r \sqrt{ \frac{m_2}{2} } \, \bigg); 
\label{cosh}
\end{align}

\noi
see \cite[Example 2.2]{CGS}.\footnote{In our case, $u_0 =1, u_1=0,  \lambda^2 = m_2$, and $\nu = 2$
so that the formula in the reference reduces to 
$ \bE[ U^2(r,0) ] =  E_2(m_2 t^2 /2  )$, with $E_2(z) = \cosh(\sqrt{z})$ given in  \cite[(A.5)]{CGS}.
This leads to the formula \eqref{cosh}.}
Then, combining \eqref{COV_DNZ1} and \eqref{cosh} yields
\begin{align}
\Sigma_{t,s} = 2 m_2	\int_0^{t\wedge s} (t-r) (s-r) \cosh\bigg(r \sqrt{ \frac{m_2}{2} } \, \bigg) dr.
\label{COV_DNZ}
\end{align}

\noi
In particular, we have for any fixed $t\in(0,\infty)$,
\begin{align}
\sigma_R(t) := \sqrt{ \Var \big( F_R(t)  \big)  }  \sim \sqrt{\Sigma_{t,t}R}
\label{COV_s}
\end{align}
as $R\to\infty$;   while it is clear that $\Sigma_{t, t} > 0$ for every $t > 0$.
\qedhere

\end{proof}

\begin{remark}\label{rem43}
\rm
(i) As the first step in establishing the central limit theorems,
 we find the exact order of the limiting variance \eqref{COV_s}. 
Using the available expressions of chaos expansion \eqref{u1}--\eqref{rho_t},
one can perform similar computations as in \cite[Subsection 4.1.1]{BNQSZ}
and obtain a formula for the limiting covariance structure $\Sigma$, 
which is however not explicit.
In the above proof, we  used 
a trick of transferring to the setting of Gaussian white noise, where
exact computations would lead to the explicit formula \eqref{COV_S}
for the limiting covariance structure $\Sigma$.
 
\smallskip
\noi
(ii) One can see from \cite[Lemma 3.4 on page 3028]{DNZ20} that for every $t >0$,
${\rm Var}(  G_R(t) ) >0 $
 for every $R > 0$. Then we deduce from \eqref{COV3} that $\s_R(t)$, defined as in \eqref{COV_s},
 is strictly positive for every $R > 0$.

\end{remark}

\bigskip
\noi
$\bul$ {\bf Part $\II$: Quantitative central limit theorems.}

\begin{proof}[Proof of Theorem \ref{thm:main} \rm (iii)]
Throughout this proof, we assume that  $m_{1+\al}$ and $m_{2+2\al}$ are finite
for some  $\al\in(0,1]$. By interpolation, $m_2$ is finite automatically.
Recall that $D^+ = D$ on $\dom(D)$
and $D^+ D^+ = D^2$ on $\dom(D^2)$.
Then it is easy to see from Proposition \ref{prop:dec} (iii) and Lemma \ref{lem:Fubi}
that $F_R(t)\in \dom(D^2)$
such that 

\noi
\begin{align}
\begin{aligned}
\| D_{r, y, z} F_R(t) \|_{2+2\al}
&\leq  \int_{-R}^R \|  D_{r, y, z} u(t,x) \|_{2+2\al} \, dx \\
&\leq C'_{t, 2+2\al, \nu}  |z| \cdot \int_{-R}^R  G_{t-r}(x-y) dx \\
&=C'_{t, 2+2\al, \nu} \varphi_{t,R}(r,y) |z|
\end{aligned}
\label{Q1}
\end{align}

\noi
 with $C'_{t, 2+2\al, \nu}$ as in \eqref{CTPA},
and

\noi
\begin{align}
\begin{aligned}
\| D_{r_1, y_1, z_1}D_{r_2, y_2, z_2} F_R(t) \|_{2+2\al}
&\leq  \int_{-R}^R \| D_{r_1, y_1, z_1}D_{r_2, y_2, z_2}  u(t,x) \|_{2+2\al} \, dx \\
&\leq C''_{t, 2+2\al, \nu} |z_1z_2| \cdot \int_{-R}^R  \wt{f}_{t,x, 2}(r_1, y_1, r_2, y_2)   dx,
\end{aligned}
\label{Q2}
\end{align}

\noi
 with $C''_{t, 2+2\al, \nu}$ as in \eqref{CTPB},
where $\varphi_{t,R}$ is as in \eqref{Rosen6b} and  $f_{t,x,2}$ is as in \eqref{KER:f}
with
\begin{align}
\begin{aligned}
&\wt{f}_{t,x,2}(r_1, y_1, r_2, y_2 )  \\
&\quad =
 \frac{1}{2} \big[ G_{t-r_1}(x-y_1) G_{r_1- r_2}(y_1 -y_2) +
 G_{t-r_2}(x-y_2) G_{r_2- r_1}(y_2 -y_1) \big]
 \label{tf2}
\end{aligned}
\end{align}

\noi
with the convention \eqref{convention} in mind. 
Note that in the steps \eqref{Q1}-\eqref{Q2}, we 
need to assume the finiteness of $m_{2+2\al}$
for applying Proposition \ref{prop:dec}. 

In what follows, we apply Proposition \ref{prop:tara}
to derive the desired quantitative CLTs. More precisely,
we will compute the seven quantities $\ga_1, ... , \ga_7$
as in \eqref{gamma16}-\eqref{gamma7} with $F =  F_R(t) / \s_R(t) $ and $p =q = 1+\al$.
In the following, we will show that
\[
\text{$\ga_i^{1+\al} \les R^{-\al}$ for $i\neq 3$ and
$\ga_3 \les R^{-\al}\ind_{\{ 0< \al \leq \frac12 \}} + R^{-\frac12}\ind_{\{   \frac12  < \al \leq1\}}
$}.
\]
The above bounds, together with Proposition \ref{prop:tara}, will conclude the proof
of Theorem \ref{thm:main} (iii).

\medskip

To ease the notations, we write $\xi_i = (r_i, y_i, z_i)\in Z$
and $\fm(d\xi_i) = dr_i dy_i \nu(dz_i)$ for $i=1,2,3$.

\medskip
\noi
$\bul$ {\bf Estimation of $\ga_1$.}
We can first deduce from  \eqref{COV_s},
\eqref{Q1}, and \eqref{Q2} that

\noi
\begin{align}
\begin{aligned}
\ga_1^{1+\al}
&\les \frac{1}{R^{1+\al}} \int_{Z} \bigg( \int_Z
\| D_{r_2, y_2,z_2} F_R(t) \|_{2+2\al} \\
&\qquad\qquad\qquad \cdot \| D_{r_1, y_1,z_1} D_{r_2, y_2,z_2} F_R(t)  \|_{2+2\al} \, \fm(d\xi_2)
  \bigg)^{1+\al}  \, \fm(d\xi_1) \\
&\les  \frac{  \big( C'_{t, 2+2\al, \nu}C''_{t, 2+2\al, \nu} m_2 \big)^{1+\al} m_{1+\al}}{R^{1+\al}}
\int_0^t \int_\R    \bigg( \int_0^t \int_\R   dr_2 dy_2
\int_{[-R, R]^2} dx_1dx_2
\\
&\qquad\qquad\qquad \cdot G_{t-r_2}(x_2 - y_2)
\wt{f}_{t, x_1, 2} (r_1, y_1, r_2, y_2 )  \bigg)^{1+\al}  dr_1 dy_1,
\end{aligned}
\notag
\end{align}
where $\wt{f}_{t, x, 2}$ is as in \eqref{tf2}.
It is easy to verify that 
\begin{align}
\begin{aligned}
G_{t-r_2}(x-y_2) &\leq G_t(x-y_2)   \\
\wt{f}_{t, x, 2} (r_1, y_1, r_2, y_2 )
&\leq  G_{t}(x-y_2) G_t(y_1-y_2).
\end{aligned}
\label{bdd:G}
\end{align}
for any $(r_1, r_2, x, y_1, y_2)\in [0,t]^2 \times\R^3$.
Therefore, in view of the above bounds,
 it is then sufficient to show 
\begin{align} \label{ga1aa}
\begin{aligned}
&  \int_\R    \bigg(  \int_\R     dy_2
\int_{[-R, R]^2} dx_1dx_2 G_{t}(x_2 - y_2)
 G_{t}(x_1-y_2) G_t(y_1-y_2)  \bigg)^{1+\al}   dy_1
  \les R,
\end{aligned}
\end{align}

\noi
while the omitted temporal integration will yield a factor $t^{2+\al}$. Note that the integral
in \eqref{ga1aa} with respect to $dx_1dx_2 dy_2$ is uniformly bounded by $t^3$.
It follows that
\begin{align*}
&\textup{LHS of \eqref{ga1aa}}  \\
&\leq t^{3\al}   \int_\R  dy_1  \int_\R     dy_2
\int_{[-R, R]^2} dx_1dx_2 G_{t}(x_2 - y_2)
 G_{t}(x_1-y_2) G_t(y_1-y_2)  \\
 &\leq    t^{3\al}  \cdot t^3 \cdot 2R = 2 t^{3+3\al} R
\end{align*}
by performing integration in the order of $dy_1, dx_1, dy_2$, and then $dx_2$.
Hence, \eqref{ga1aa} is proved. That is, we just proved that

\noi
\begin{align}
\ga_1^{1+\al} \les R^{-\al}
\quad\textup{and equivalently}
\quad
\ga_1  \les R^{- \frac{\al}{1+\al}}.
\label{ga1d}
\end{align}
In this step, we need to assume the finiteness of $m_{1+\al}$
  and $m_{2+2\al}$.

\medskip

\noi
$\bul$ {\bf Estimation of $\ga_2$.}
We can deduce from  \eqref{Q2}, \eqref{bdd:G}, and  \eqref{COV_s} that

\noi
\begin{align}
\begin{aligned}
&\ga_2^{1+\al}
\les \frac{ (C''_{t, 2+2\al, \nu})^2  }{R^{1+\al}} \int_Z \bigg( \int_Z |z_1 z_2|^2
 \bigg[ \int_{-R}^R G_{t}(x-y_2) G_t(y_1-y_2) dx \bigg]^2   \\
&\qquad\qquad\qquad\qquad\qquad
 \ind_{  \{  0\leq r_1,r_2\leq t  \}   }  dr_2 dy_2 \nu(dz_2) \bigg)^{1+\al}   dr_1 dy_1 \nu(dz_1)   \\
 &\leq \frac{m_2^{1+\al} m_{2+2\al}  (C''_{t, 2+2\al, \nu})^2 }{R^{1+\al}} t^{2+\al} \int_\R \bigg( \int_\R
 \bigg[ \int_{-R}^R G_{t}(x-y_2) dx \bigg]^2 G_t(y_1-y_2)   dy_2  \bigg)^{1+\al}
  dy_1.
  \end{aligned}
    \label{ga2b}
 \end{align}

\noi
It is easy to see from \eqref{factsG} that
\begin{align}
\int_{-R}^R G_{t}(x-y_2)dx \leq t
\quad{\rm and}\quad
\int_\R
 \bigg[ \int_{-R}^R G_{t}(x-y_2)dx \bigg]^2 G_t(y_1-y_2)   dy_2  \leq t^3.
 \label{bdd:G2}
\end{align}
Therefore, we continue with \eqref{ga2b}:

\noi
\begin{align*}
\ga_2^{1+\al}
&\les \frac{1}{R^{1+\al}}   \int_\R \bigg( \int_\R
 \bigg[ \int_{-R}^R G_{t}(x-y_2) dx \bigg]^2 G_t(y_1-y_2)   dy_2  \bigg)   dy_1 \\
 &\les  \frac{1}{R^{1+\al}}   \int_\R \bigg( \int_\R
 \bigg[ \int_{-R}^R G_{t}(x-y_2) dx \bigg] G_t(y_1-y_2)   dy_2  \bigg)   dy_1 \\
 &\les R^{-\al}
\end{align*}
by performing the integration in the order of $dy_1, dy_2$, and $dx$.
That is, we just proved that

\noi
\begin{align}
\ga_2^{1+\al} \les  R^{-\al}
\quad\textup{and equivalently}
\quad
\ga_2  \les R^{- \frac{\al}{1+\al}}.
\label{ga2c}
\end{align}
In this step, we   need  to assume the finiteness of $m_{2+2\al}$.

\medskip
\noi
$\bul$ {\bf Estimation of $\ga_3$.} In this step, we fix
\begin{align}
q=
\begin{cases}
{\rm(i)} \,\,\,\, 1+2\al  \,\,\,&\text{if $\al\in(0,\frac12]$}  \\[1em]
{\rm(ii)} \,\,\,\, 2  \,\,\,&\text{if $\al\in(\frac12, 1]$}
 \end{cases}
 \label{ga3a}
 \end{align}
(so that $q\in(1,2]$) and estimate the quantity
$\ga_3$ defined in \eqref{gamma16} with $F = F_R(t)/\s_R(t)$.
We deduce from \eqref{Q1} and \eqref{COV_s} with \eqref{bdd:G} and \eqref{factsG} that

\noi
\begin{align}
\begin{aligned}
\ga_3
& = 2 \frac{1}{\s^{q+1}_R(t)} \int_Z \| D_{r, y, z}F_R(t) \|_{q+1}^{q+1} \, dr dy \nu(dz) \\
&\les \frac{m_{q+1}   (C'_{t, q+1, \nu})^{q+1}  }{R^{\frac{q+1}{2}}}
\int_0^t \bigg(  \int_\R \bigg| \int_{-R}^R G_{t}(x-y) dx \bigg|^{q+1} \,  dy \bigg) dr \\
&\les \frac{1}{R^{\frac{q+1}{2}}}  \int_\R    \bigg( \int_{-R}^R G_{t}(x-y) dx \bigg) dy 
= 2t \cdot R^{-\frac{q-1}{2}}.
\end{aligned}
\label{ga3b}
\end{align}

\noi
Therefore, it follows from \eqref{ga3a} and \eqref{ga3b} that
\begin{align}
\ga_3
\les R^{-\al}\ind_{\{ 0< \al \leq \frac12 \}} + R^{-\frac12}\ind_{\{   \frac12  < \al \leq1\}}.
\label{ga3}
\end{align}
\noi
In this step, the finiteness of $m_{q+1}$ is guaranteed by that of  
$m_{2+2\al}$. 
Note that the rate in \eqref{ga3} is faster than those in \eqref{ga1d} and \eqref{ga2c}.

\medskip

Therefore, we can deduce from \eqref{2nd:WassB} in Proposition \ref{prop:tara}
with \eqref{ga1d}, \eqref{ga2c}, and \eqref{ga3} that
\[
d_{\rm FM}\Big( \frac{F_R(t)}{\s_R(t)}, \NN(0,1)   \Big)
\leq d_{\rm Wass}\Big( \frac{F_R(t)}{\s_R(t)}, \NN(0,1)   \Big) \les R^{-\frac\al{1+\al}}.
\]

Next, we continue to estimate $\ga_4, \ga_5, \ga_6$, and $\ga_7$
for getting the Kolmogorov bound
 and we will just hide the constants $C'_{t, 2+2\al, \nu}$
and $C''_{t, 2+2\al, \nu}$ in the estimations.

\medskip
\noi
$\bul$ {\bf Estimation of $\ga_4$.} The estimation of the quantity
$\ga_4$ can be done in the same way as in \eqref{ga3b}:

\noi
\begin{align*}
\ga_4^{1+\al}
&\les  \frac{1}{\s^{2+2\al}_R(t)} \int_Z \| D_{r, y, z}F_R(t) \|_{2+2\al}^{2+2\al} \, dr dy \nu(dz) \\
&\les  \frac{m_{2+2\al}}{R^{1+\al}} \int_0^t dr  \int_\R dy \bigg| \int_{-R}^R G_{t}(x-y) dx \bigg|^{2+2\al}  \\
&\les R^{-\al}.
\end{align*}

\noi
That is, we have
\begin{align}
\ga_4^{1+\al} \les R^{-\al}
\quad\textup{and equivalently}
\quad
\ga_4  \les R^{- \frac{\al}{1+\al}}.
\label{ga4c}
\end{align}
In this step, we need to assume the finiteness of $m_{2+2\al}$.

\medskip
\noi
$\bul$ {\bf Estimation of $\ga_5$.} We first deduce from
\eqref{Q2}, \eqref{COV_s}, and \eqref{bdd:G} with \eqref{bdd:G2}
 that

\noi
\begin{align}
\begin{aligned}
\ga_5^{1+\al}
&  \les  \frac{m^2_{2+2\al}}{R^{1+\al}}
\int_0^t dr_1 \int_\R dy_1 \int_0^t dr_2 \int_\R dy_2
\bigg(  \int_{-R}^R G_{t}(x-y_2)G_t(y_1-y_2) dx \bigg)^{2+2\al} \\
&\les \frac{1}{R^{1+\al} }
  \int_\R dy_1  \int_\R dy_2
\bigg(  \int_{-R}^R G_{t}(x-y_2)G_t(y_1-y_2) dx \bigg)  \\
&\les R^{-\al},
\end{aligned}
\label{ga5a}
\end{align}
by performing integration in the order of   $dy_1, dy_2$, and $dx$.
That is, we have
\begin{align}
\ga_5^{1+\al} \les R^{-\al}
\quad\textup{and equivalently}
\quad
\ga_5  \les R^{- \frac{\al}{1+\al}}.
\label{ga5}
\end{align}
In this step, we need to assume the finiteness of $m_{2+2\al}$.

\medskip
\noi
$\bul$ {\bf Estimation of $\ga_6$.}   Note that $\| D_{r_1,y_1,z_1} F_R(t)\|_{2+2\al} \les t |z_1|$.
Similarly as in \eqref{ga5a}, we can write

\noi
\begin{align*}
\ga_6^{1+\al}
&\les \frac{m_{1+\al} m_{2+2\al}   }{R^{1+\al}}
\int_0^t dr_1 \int_\R dy_1 \int_0^t dr_2 \int_\R dy_2
\bigg(  \int_{-R}^R G_{t}(x-y_2)G_t(y_1-y_2) dx \bigg)^{1+\al} \\
&\les R^{-\al}.
\end{align*}
That is, we have
\begin{align}
\ga_6^{1+\al} \les R^{-\al}
\quad\textup{and equivalently}
\quad
\ga_6  \les R^{- \frac{\al}{1+\al}}.
\label{ga6a}
\end{align}
In this step, we need to assume the finiteness of $m_{2+2\al}$ and $m_{1+\al}$.

\medskip
\noi
$\bul$ {\bf Estimation of $\ga_7$.}   Similarly as in the estimation of $\ga_6$,
we roughly bound
\[
\| D_{r_1,y_1, z_1} F_R(t) \|_{2+2\al} \| D_{r_2,y_2, z_2} F_R(t) \|^{1+2\al}_{2+2\al}
\les t^{2\al+2} |z_1|  \cdot |z_2|^{1+2\al},
\]
and we can write
\begin{align*}
\ga_7^{1+\al}
&\les \frac{m_2 m_{1+2\al}   }{R^{1+\al}}
\int_0^t dr_1 \int_\R dy_1 \int_0^t dr_2 \int_\R dy_2
\bigg(  \int_{-R}^R G_{t}(x-y_2)G_t(y_1-y_2) dx \bigg) \\
&\les R^{-\al}.
\end{align*}

\noi
That is, we have
\begin{align}
\ga_7^{1+\al} \les R^{-\al}
\quad\textup{and equivalently}
\quad
\ga_7  \les R^{- \frac{\al}{1+\al}}.
\label{ga7}
\end{align}
In this step, we need to assume the finiteness of $m_{1+2\al}$
 and $m_{2+2\al}$, while  the finiteness of $m_{1+2\al}$  is guaranteed
by the finiteness of $m_{1+\al}$ and $m_{2+2\al}$.

\medskip

Therefore, it follows from \eqref{2nd:KolB} in  Proposition \ref{prop:tara}
with \eqref{ga1d}, \eqref{ga2c},    \eqref{ga4c}, \eqref{ga5}, \eqref{ga6a}, and \eqref{ga7}
 that
\[
 d_{\rm Kol}\Big( \frac{F_R(t)}{\s_R(t)}, \NN(0,1)   \Big) \les  R^{-\frac{\al}{1+\al}}.
\]

Hence the proof of part  (iii) in Theorem \ref{thm:main} is completed.
\qedhere

\end{proof}

\medskip
\noi
$\bul$
{\bf Part $\III$: Functional central limit theorems.}  

In this part, we present the proof of Theorem \ref{thm:main} (iv).
The remaining part of the proof consists of two steps: we first show the
convergence  in finite-dimensional distributions and then
conclude this section by proving the tightness of the process
$\{   \frac{1}{\sqrt{R}} \{F_R(t)\}_{t\in\R_+} : R\geq 1\}$.

\medskip
\noi
$\bul$ {\bf  Step 1: Convergence  in finite-dimensional distributions.}
Fix any $0< t_1 <  ... <  t_m <\infty$ with $m\in\N_{\geq 2}$.
We need to show that
\[
  \Big( \frac{1}{\sqrt{R}} F_R(t_1), ... , \frac{1}{\sqrt{R}} F_R(t_m)\Big)
\]
converges in law to a centered Gaussian vector on $\R^m$
with covariance matrix $(\Sigma_{t_i, t_j})_{i,j=1,..., m}$,
where $\Sigma$ is as in \eqref{COV_S}.
Then, it suffices to show that
\begin{equation}
\label{XR-conv}
X_R:=\sum_{j=1}^m b_j  \frac{F_R(t_j)}{\sqrt{R}}
\quad\textup{converges in law to}
\quad
\sum_{j=1}^m b_j  \mathcal{G}_{t_j}, \quad \mbox{as $R \to \infty$}
\end{equation}
for any integer $m\geq 1$, for any $b_1, ... , b_m\in\R$, and for any
$t_1, ... , t_m\in\R_+$,
where $\mathcal{G}$ is a centered continuous Gaussian process with covariance
structure $\Sigma$ given as in   \eqref{COV_S}. 
Let
\[
\tau^2:= \Var \left(\sum_{j=1}^m b_j  \mathcal{G}_{t_j} \right)= \sum_{j,k=1}^m b_j b_k \Sigma_{t_j, t_k}.
\]
Then \eqref{XR-conv} is equivalent to 
\begin{equation}
\label{XR-conv1}
X_R  \,\,\, \mbox{converges in law to $\NN(0,\tau^2)$} 
\,\,\, \mbox{as $R \to \infty$}.
\end{equation}
Moreover, by \eqref{COV3}-\eqref{COV_DNZ},
\[
\tau_R^2:= \Var (X_R)=\frac{1}{R}\sum_{j,k=1}^{m}b_j b_k \bE[F_R(t_j) F_R(t_k)] \to \tau^2 
\,\,\, \mbox{as $R \to \infty$}.
\]
 The rest of the proof is trivial if $\tau^2 = 0$. It is also easy to see from the above limit  that
if $\tau^2 > 0$, then $\tau_R^2 > 0$ for large $R$.
Then, without losing any generality, we will assume that  both 
$\tau^2_R$ and $\tau^2$ are strictly positive for every $R$.

From the Wasserstein bound \eqref{2nd:WassB} in Proposition \ref{prop:tara}, we deduce that

\noi
\begin{align}
d_{\rm Wass}\left( \frac{X_R}{\tau_R}, \NN(0,1) \right)
\leq  \ga_1 +  \ga_2 +  \ga_3,
\label{fddW}
\end{align}
where $  \ga_1,  \ga_2$, and $ \ga_3$ are defined as in \eqref{gamma16} with
$F =  X_R$.
The rest of the arguments are almost identical to those in Part $\II$
that we  sketch in the following.
First, we write
\begin{align*}
\ga_1^{1+\al}
&\les \frac{1}{R^{1+\al}} \int_Z \bigg[ \int_Z \Big\| D_{\xi_2}  \sum_{j=1}^m b_j  F_R(t_j) \Big\|_{2+2\al}
\Big\| D_{\xi_1}D_{\xi_2}  \sum_{k=1}^m b_k  F_R(t_k) \Big\|_{2+2\al} \fm(d\xi_2) \bigg]^{1+\al} \fm(d\xi_1)\\
&\les  \frac{1}{R^{1+\al}} \sum_{j,k=1}^m \int_Z \bigg[ \int_Z \big\| D_{\xi_2}   F_R(t_j) \big\|_{2+2\al}
\big\| D_{\xi_1}D_{\xi_2}  F_R(t_k) \big\|_{2+2\al} \fm(d\xi_2) \bigg]^{1+\al} \fm(d\xi_1).
\end{align*}
Note that our estimations in Part $\II$ can be carried out in the same way for $t_j \neq t_k$,
and therefore, we still get $\ga_1^{1+\al} \les R^{-\al}$. In the same manner,
we can obtain the asymptotical negligibility of $\ga_2$ and $\ga_3$, and hence that of the Wasserstein
distance in \eqref{fddW} under the assumption \eqref{cond:al}.

Finally,
\begin{align*}
d_{\rm Wass}\bigg( \frac{X_R}{\tau}, \NN(0,1) \bigg)& \leq
d_{\rm Wass}\bigg( \frac{X_R}{\tau}, \frac{X_R}{\tau_R} \bigg)+
d_{\rm Wass}\bigg( \frac{X_R}{\tau_R}, \NN(0,1) \bigg)\\
& \leq \left|\frac{1}{\tau}-\frac{1}{\tau_R} \right| \bE|X_R|+d_{\rm Wass}\bigg( \frac{X_R}{\tau_R}, \NN(0,1) \bigg) \to 0 \,\,\, \mbox{as $R \to \infty.$}
\end{align*}
This implies \eqref{XR-conv1} and
concludes the proof of the convergence of the finite-dimensional distributions.

\medskip
\noi
$\bul$ {\bf Step 2: Tightness.} For tightness, we only need to assume the finiteness of $m_2$.
We first deduce from Proposition \ref{Prop:Rosen} (ii)  (with $p=2$)
and   Kolmogorov's continuity theorem
(see, e.g., \cite[Theorem 4.23]{Kall21})
that for each $R \geq 1$, the process $F_R:=\{ F_R(t)\}_{t\in\R_+}$
admits a continuous modification that is almost surely
locally $\be$-H\"older continuous for any $\be\in(0, \frac12)$.
Moreover, the bound \eqref{Rosen2a} in Proposition \ref{Prop:Rosen} (ii)  (with $p=2$),
together with the tightness criterion of Kolmogorov-Chentsov
(see, e.g., \cite[Theorem 23.7]{Kall21}),
implies that
$
 \big\{  \frac{1}{\sqrt{R} } F_R \big\}_{R\geq 1}
$
 is a tight family of continuous processes;
 that is, a tight family of random variables
 with values in $C(\R_+; \R)$.

Combining the above two steps, we conclude the desired functional
CLT under the assumption \eqref{cond:al}.
Hence,  we just finished the proof of Theorem \ref{thm:main}.
\hfill $\square$

\appendix

\section{Proof of the equivalence  (\ref{def:MP})}
\label{APP}

Recall from \eqref{LbA} and \eqref{IDlaw} that
$
L_b(A) = b\cdot\Leb(A) + M(A) + K(A),$
with
\[
M(A): = \int_{A\times\{|z| \leq 1\} } z \wh{N}(dt, dx, dz)
\,\,\,
\textup{independent of}
\,\,\,
K(A): = \int_{A\times\{|z| > 1\} } z N (dt, dx, dz).
\]

In what follows, we record a few facts on $M(A)$ and $K(A)$:

\noi
\begin{itemize}
\item[(i)]

the  characteristic function of $M(A)$ is given by
\begin{align*}
\bE\big[ e^{i \lambda M(A)} \big]
= \exp\Big( \Leb(A) \int_{\{|z| \leq 1 \}} (e^{i\lambda z} - 1 - i\lambda z ) \nu(dz) \Big),
\end{align*}
and by Lebesgue's differentiation theorem with the dominance
condition \eqref{LevyM}, we deduce that the above characteristic
function is infinitely  differentiable and can be extended to an entire
function on $\mathbb{C}$. This implies in particular
that the random variable $M(A)$ has finite exponential moments:

\noi
\begin{align}
\bE\big[ e^{  c | M(A) |} \big] <\infty
\label{APP0}
\end{align}

\noi
for every $c>0$; see Lemma 25.7 in \cite{Sato99}.

\item[(ii)] $K(A)$ is compound Poisson random variable that can be expressed
as follows:
\begin{align}
K(A) = \sum_{j=1}^Q Y_j
\label{APP0b}
\end{align}
with $\{Y_j\}_{ j \geq 1}$ independent   random variables
with common distribution $\frac{1}{\nu( \{ |z| >1 \}) } \nu|_{\{|z| >1 \}}$,
and $Q$ a Poisson random variable with mean $\mathbf{M}:=\Leb(A)\nu( \{ |z| >1 \})$ that is independent
of $\{Y_j\}_{ j \geq 1}$.

\end{itemize}

Put $\langle x\rangle = \sqrt{ 1+ x^2 }$. It is easy to see that for any finite $p>0$
and for any  {\it finite} measure $\mu$ on $\R$,
we have
\[
 \int_{\R }  \langle x\rangle^p \mu(dx)  \sim  \int_{\R }  |x|^p \mu(dx),
\]
from which we deduce that the equivalence \eqref{def:MP} can be rewritten as
\begin{align}
\bE\big[   \langle L_b(A) \rangle^p \big] <\infty
\,\,\,
\Longleftrightarrow
\,\,\,
\int_{\{|z| > 1 \}}  \langle x\rangle^p \nu(dz) <\infty.
\label{APP1}
\end{align}

\begin{proof}[Proof of \eqref{APP1} and \eqref{def:MP}]
We use the same argument as in the proof of Theorem 25.3 of \cite{Sato99}.
Fix $p\in(0,\infty)$. Observe first that  the function   $x\in\R\mapsto \langle x \rangle^{p}$
 is sub-multiplicative meaning
that
\begin{align}
\langle x + y\rangle^p \leq 2^p \langle x\rangle^p \cdot \langle y\rangle^p.
\label{APP2}
\end{align}

First, assume that $\bE\big[   \langle L_b(A) \rangle^p \big] <\infty$, i.e.
\[
\bE\big[   \langle b \cdot \Leb(A) + M(A) + K(A) \rangle^p \big] <\infty.
\]
It follows that for {\it some} $x_0\in\R$, we have
$
\bE\big[   \langle x_0  + K(A) \rangle^p \big] <\infty.
$
Then, we can deduce from \eqref{APP2} with
$K(A) = K(A) + x_0 + (-x_0)$ that

\noi
\begin{align}
\bE\big[   \langle  K(A) \rangle^p \big]
& \leq  2^p  \langle  - x_0   \rangle^p     \bE\big[   \langle x_0  + K(A) \rangle^p \big]  <\infty.
\label{APP2b}
\end{align}
Note that we can get  from \eqref{APP0b}
that

\noi
\begin{align}
\begin{aligned}
\bE\big[   \langle  K(A) \rangle^p \big]
&= \sum_{n=0}^\infty e^{- \mathbf{M} } \frac{\mathbf{M}^n  }{n!}
\bE\big[
 \langle  Y_1 + ... + Y_n   \rangle^p \big] \\
 &\geq e^{- \mathbf{M} } \mathbf{M} \cdot
\bE\big[  \langle  Y_1    \rangle^p \big],
\end{aligned}
\label{APP3}
\end{align}
which,  together with \eqref{APP2b}, implies
$\int_{\{|z| > 1 \}}  \langle x\rangle^p \nu(dz) <\infty.$

For the other direction, we can write by using
\eqref{APP2}, \eqref{APP0}, and \eqref{APP3} with independence
among $Y_j$'s
that

\noi
\begin{align*}
\bE\big[  \langle L_b(A) \rangle^p \big]
&\les 1 + \bE\big[   \langle  K(A) \rangle^p \big]  \\
&\leq 2  +e^{- \mathbf{M} } \mathbf{M} \cdot
\bE\big[  \langle  Y_1    \rangle^p \big]
+ \sum_{n=2}^\infty e^{- \mathbf{M} } \frac{\mathbf{M}^n  }{n!}
\bE\big[ 2^{p(n-1)} \langle  Y_1 \rangle^p \cdots  \langle Y_n \rangle^p \big] \\
&\leq 2 +  e^{- \mathbf{M} } \mathbf{M} \cdot \bE\big[  \langle  Y_1    \rangle^p \big]
+ \sum_{n=2}^\infty e^{- \mathbf{M} } \frac{\mathbf{M}^n  }{n!}
 2^{p(n-1)} \big( \bE [  \langle  Y_1 \rangle^p  ] \big)^n <\infty,
\end{align*}
provided that $ \bE [  \langle  Y_1 \rangle^p  ] \sim  \int_{\{|z| > 1 \}}  \langle x\rangle^p \nu(dz) <\infty$. Hence the equivalence \eqref{APP1} is verified, and so is
the equivalence \eqref{def:MP}.
\end{proof}

\begin{ackno}\rm
G.Z. would like to thank David Nualart for
his  continuous support and also for the inspiring
collaborations that brought   him to the field of stochastic partial
differential equations.
The authors would also like to thank Giovanni Peccati
for bringing the recent  work \cite{Tara} to our attention that leads
an improvement of our work. 
 The authors 
are very grateful to the referees for their 
meticulous review and numerous suggestions, 
which have greatly improved the quality of the current paper. 

\end{ackno}

\end{document}